\documentclass[reqno,a4paper,11pt]{amsart}
\parindent=15pt
\parskip=3pt
\setlength{\textwidth}{7in}
\setlength{\oddsidemargin}{-24pt}
\setlength{\evensidemargin}{-24pt}
\setlength{\textheight}{9.2in}
\setlength{\topmargin}{-5pt}

\usepackage[all,poly]{xy}
\usepackage{amsfonts}
\usepackage[mathcal]{eucal}
\usepackage{amssymb}
\usepackage{amsmath}
\usepackage{mathrsfs}
\usepackage{amsthm}
\usepackage{color}
\usepackage[pagebackref,colorlinks]{hyperref}
\usepackage{enumerate}

\setcounter{section}{-1}

\theoremstyle{plain}
\newtheorem{lemma}{Lemma}[section] 
\newtheorem{theorem}[lemma]{Theorem}
\newtheorem{corollary}[lemma]{Corollary}
\newtheorem{proposition}[lemma]{Proposition}

\theoremstyle{definition}
\newtheorem{remark}[lemma]{Remark}
\newtheorem{example}[lemma]{Example}
\newtheorem{definition}[lemma]{Definition}

\newcommand{\M}{\operatorname{\mathbb M}}
\newcommand{\ol}{\overline}

\newcommand{\Zset}{\mathbb Z}

\newcommand{\sto}{\to^{\hskip-.42cm A1}}
\newcommand{\cto}{\rightsquigarrow}
\newcommand{\supp}{\operatorname{supp}}

\newcommand{\so}{\mathbf{s}}
\newcommand{\ra}{\mathbf{r}}

\newcommand{\V}{\mathcal V}

\title[Comparability in the graph monoid]{Comparability in the graph monoid}

\author{Roozbeh Hazrat}
\address{Centre for Research in Mathematics and Data Science, Western Sydney University, Australia} 
\email{r.hazrat@westernsydney.edu.au}

\author{Lia Va\v s}
\address{Department of Mathematics, Physics and Statistics, University of the Sciences, Philadelphia, PA 19104, USA}
\email{l.vas@usciences.edu}

\subjclass[2010]{ 
06F05, 
05C25, 
19A49, 
06F20, 
16E20, 
20M32} 

\keywords{graph, group action, graph monoid, ordered abelian group, Grothendieck group}
\thanks{The authors are very grateful to the referee for a prompt, detailed and thoughtful report. The first author would also like to acknowledge Australian Research Council grant DP160101481.}

\begin{document}
 
\begin{abstract} 
Let $\Gamma$ be the infinite cyclic group on a generator $x.$ To avoid confusion when working with $\mathbb Z$-modules which also have an additional $\mathbb Z$-action, we consider the $\mathbb Z$-action to be a $\Gamma$-action instead.  

Starting from a directed graph $E$, one can define a cancellative commutative monoid $M_E^\Gamma$ with a $\Gamma$-action which agrees with the monoid structure and a natural order. The order and the action enable one to label each nonzero element as being exactly one of the following: comparable (periodic or aperiodic) or incomparable. We comprehensively pair up these element features with the graph-theoretic properties of the generators of the element. We also characterize graphs such that every element of $M_E^\Gamma$ is comparable, periodic, graphs such that every nonzero element of $M_E^\Gamma$ is aperiodic, incomparable, graphs such that no nonzero element of $M_E^\Gamma$ is periodic, and graphs such that no element of $M_E^\Gamma$ is aperiodic.

The Graded Classification Conjecture can be formulated to state that $M_E^\Gamma$ is a complete invariant of the Leavitt path algebra $L_K(E)$ of $E$ over a field $K.$ Our characterizations indicate that the Graded Classification Conjecture may have a positive answer since the properties of $E$ are well reflected by the structure of $M_E^\Gamma.$  Our work also implies that some results of \cite{Talented_monoid} hold without requiring the graph to be row-finite.  
\end{abstract}
 
\maketitle

\section{Introduction}

There are several different ways to associate an algebra over a field $K$ to a directed graph $E$. For example, one can form the path algebra $P_K(E)$ which is a vector space over $K$ based on paths multiplied using concatenation. If one wants to add a natural involutive structure to this algebra (as, for example, when completing the path algebra over complex numbers to obtain the graph $C^*$-algebra $C^*(E)$), then every vertex naturally becomes a self-adjoint idempotent, a projection, and every edge $e$ becomes a partial isometry making the projections $ee^*$ and $e^*e$ equivalent. If $\so$ and $\ra$ are the source and range maps of $E$ respectively, and $\so(e)=v,$ then $ve=e$ so that $vee^*=ee^*$ and, hence, $v\geq ee^*$
(recall that the projections are ordered by $p\leq q$ if $pq=p$). On the other hand, if $w=\ra(e),$ then $ew=e$ and so $w\geq e^*e.$ The requirement that $w=e^*e$ is called the (CK1) axiom. One also aims to have that the projections $v$ and $w$ are equivalent if $e$ is the {\em only} edge from $v$ to $w$ and if $v$ does not emit any other edges. This is achieved by an additional requirement, the (CK2) axiom, stating that $v=\sum_{e\in \so^{-1}(v)} ee^*$ if $v$ emits at least one and only finitely many edges. The axioms (CK1) and (CK2) imposed on the involutive closure of the path algebra produce the Leavitt path algebra $L_K(E).$ If $\V(L_K(E))$ is the monoid of the isomorphism classes of finitely generated projective modules (or conjugation classes of idempotent matrices), the (CK1) and (CK2) axioms imply that  
\[[v]=\sum_{e\in \so^{-1}(v)} [\ra(e)]\]
holds in  $\V(L_K(E))$ for every vertex $v$ which emits at least one and only finitely many edges. 
If $E$ is such that every vertex emits only finitely many edges, in which case we say that $E$ is row-finite, one of the first papers on Leavitt path algebras \cite{Ara_Moreno_Pardo} shows that elements $[v]$ generate $\V(L_K(E))$ and that the above relations are the {\em only} relations which hold on $\V(L_K(E))$. Thus, to capture $\V(L_K(E))$ entirely, it is sufficient to consider a free commutative monoid $M_E,$ called the {\em graph monoid}, generated by $[v]$ where $v$ is a vertex of $E$ subject to the above relations. In \cite{Ara_Goodearl}, the authors generalized this construction to arbitrary graphs. 
To handle vertices which emit infinitely many edges (infinite emitters), one adds two natural relations to the one listed above (the details are reviewed in Section \ref{subsection_graph_group_nongraded_case}) to obtain $M_E.$ 

The monoid $M_E$ is not necessarily cancellative which is easy to see: if $v$ is a vertex emitting two edges to itself, then the relation $[v]+[v]=[v]$ holds in the monoid but the generator $[v]$ is nonzero. So, when one forms the Grothendieck group $G_E$ of the monoid $M_E$ a lot of information can get lost. In particular, if $E$ is a graph consisting only of the vertex and edges from the previous example, then $G_E=0.$ 

In addition to the above mentioned downside, very different graphs give rise to isomorphic monoids and, consequently, isomorphic Grothendieck groups. For example, $\bullet$ and 
$\xymatrix{ {\bullet} \ar@(ur,dr)}\;\;\;\;\;.$ In addition, consider the graphs $E_1$ and $E_2$ below, for example.  
\[
\xymatrix{\bullet^{v_1} \ar[r]                  &\bullet^{w_1}}\hskip2cm
\xymatrix{\bullet^{v_2} \ar[r] & \bullet \ar[r] &\bullet^{w_2}}\]
The relation $[v_1]=[w_1]$ holds in the first and the relation $[v_2]=[w_2]$ holds in the second graph monoid regardless of the fact that the length of the only path from $v_1$ to $w_1$ is 1 in $E_1$ while the length of the only path from $v_2$ to $w_2$ is 2 in $E_2.$ So, this type of information is also lost in the Grothendieck group. 

These downsides can be avoided by taking the natural grading of a Leavitt path algebra into consideration. Namely, the elements $pq^*$ where $p$ and $q$ are paths, generate the entire algebra as a $K$-vector space and if $p$ and $q$ are such that the difference of the length of $p$ and the length of $q$ is an integer $n,$ the generator $pq^*$ is considered to be in the $n$-th component of $L_K(E).$ This produces a $\Zset$-graded structure of $L_K(E)$ where $\Zset$ is the set of integers. For a ring $R$ graded by a group $\Gamma,$ the monoid $\V^\Gamma(R)$ of the graded isomorphism classes of finitely generated {\em graded} projective modules (or conjugation classes of certain homogeneous idempotent matrices) is a natural analogue of $\V(R).$ The monoid $\V^\Gamma(R)$ has a canonical $\Gamma$-action and we refer to a monoid with this type of structure as a $\Gamma$-monoid. 

To avoid confusion when working with structures which are $\Zset$-modules but also have an additional $\Zset$-action, we let $\Gamma=\{x^n\mid n\in \Zset\}$ and consider the $\Zset$-action to be a $\Gamma$-action instead. The $\Gamma$-action on $\V^\Gamma(L_K(E))$ is such that the relation $[v]=\sum_{e\in \so^{-1}(v)} [\ra(e)]$ becomes 
\[[v]=\sum_{e\in \so^{-1}(v)} x[\ra(e)]\] if $\so^{-1}(v)$ is nonempty and finite. The power 1 of $x$ in this relation indicates the length of the path $e$ from $v$ to $\ra(e).$ With analogous modifications of the other defining relations, we let {\em the graph $\Gamma$-monoid $M_E^\Gamma$} be the quotient of a free $\Gamma$-monoid $F_E^\Gamma$ with basis elements labeled by the vertices and the elements related to the infinite emitters subject to the defining relations (Section \ref{subsection_graph_group} contains more details).  
Alternatively, if $\to_1$ is a binary relation of $F_E^\Gamma$ given by these defining relations, $\to$ is the  reflexive and transitive closure of $\to_1,$ and $\sim$ is the congruence closure of $\to,$ then $M_E^\Gamma$ is the quotient $\Gamma$-monoid $F_E^\Gamma/\sim.$  The $\Gamma$-monoid $M_E^\Gamma$ is naturally isomorphic to $\V^\Gamma(L_K(E))$.

The monoid $M_E^\Gamma$ has several important advantages over $M_E.$ First, it is always cancellative by \cite[Corollary 5.8]{Ara_et_al_Steinberg} (we give an alternative proof in Proposition \ref{cancellative}) and so it is exactly the positive cone of its Grothendieck group $G_E^\Gamma.$ This group inherits the $\Gamma$-action from $M_E^\Gamma$ so we refer to it as the Grothendieck $\Gamma$-group. Second, the information on the lengths of paths from a vertex to vertex is not lost. For example, if $E_1$ and $E_2$ are the above two graphs, the relations $[v_1]=[w_1]$ and $[v_2]=[w_2]$ of $M_{E_1}$ and $M_{E_2}$ become 
\begin{center}
$[v_1]=x[w_1]\;\;$ and $\;\;[v_2]=x^2[w_2]$  
\end{center}
in $M_{E_1}^\Gamma$ and $M_{E_2}^\Gamma$ respectively. 
Here, the powers of $x$ indicate that the length of the (only) path from $v_1$ to $w_1$ is 1 in $E_1$ and that the length of the (only) path from $v_2$ to $w_2$ is 2 in $E_2.$ In addition, very different graphs $\bullet$ and  $\xymatrix{ {\bullet} \ar@(ur,dr)}\;\;\;\;\;$ have different Grothendieck $\Gamma$-groups: $G_E^\Gamma$ of the first graph is isomorphic to $\Zset[\Gamma]$ with the natural action of $\Gamma$ while $G_E^\Gamma$ of the second graph is isomorphic to $\Zset$ with the trivial action of $\Gamma.$ 

Because of these favorable properties of $M_E^\Gamma$ and $G_E^\Gamma,$ it was conjectured in \cite{Roozbeh_Annalen} that $G_E^\Gamma,$ considered with a natural pre-order and an order-unit, is a complete invariant of a row-finite graph $E.$ Since the monoid $M_E^\Gamma$ is always cancellative, this conjecture can also be phrased in terms of $M_E^\Gamma$ instead of $G_E^\Gamma.$ In addition, the restriction on row-finiteness can be deleted and we refer to the following statement as the {\em Graded Classification Conjecture.}   
\begin{itemize}
\item[] For any two graphs $E$ and $F$ and any field $K$, $L_K(E)$ and $L_K(F)$ are isomorphic as $\Gamma$-graded algebras if and only if $M_E^\Gamma$ and $M_F^\Gamma$ are isomorphic as pre-ordered $\Gamma$-monoids with order-units.  
\end{itemize}

Since $M_E^\Gamma$ is cancellative, the natural pre-order is, in fact, an order. In \cite{Talented_monoid}, the authors show that the relation $a< x^na$ is impossible for any $a\in M_E^\Gamma$ and any positive integer $n$ if $E$ is row-finite. In Proposition \ref{generalization_of_lemma_4_1}, we show that this holds for all graphs $E.$ Hence, there are two remaining cases. 
\begin{enumerate}
\item $a\geq x^na$ for some positive integer $n.$ In this case, we say that $a$ is {\em comparable}.

\item  $a$ and $x^na$ incomparable for any positive integer $n.$ In this case, we say that $a$ is {\em incomparable}.
\end{enumerate}
If $a$ is comparable, there are two possibilities.
\begin{enumerate}
\item[(1i)] $a=x^na$ for some positive integer $n.$ In this case, we say that $a$ is {\em periodic}. 
 
\item[(1ii)] $a>x^na$ for some positive integer $n.$ In this case, we say that $a$ is {\em aperiodic}. 
\end{enumerate}
In this paper, we provide complete characterizations of all four types of elements (comparable, incomparable, periodic and aperiodic) in terms of the graph-theoretic properties of the generators of an element. We obtain this by three groups of results. First, in Section \ref{section_connectivity}, we obtain a graph-theoretic characterization of the relation $\to$ (Proposition \ref{connecting}). Second, in Sections \ref{subsection_stationary} and \ref{subsection_core_lemma}, we introduce and study certain well-behaved building blocks of comparable elements, the stationary elements. Third, in Section \ref{subsection_partition}, we produce a graph-theoretic characterization of a stationary element (Proposition \ref{key_lemma}). This enables us to prove Theorem \ref{comparable}, the main result of Section \ref{section_comparable}, which characterizes a comparable element in terms of the graph-theoretic properties of its generators.  

In Section \ref{section_other_three}, we characterize periodic and aperiodic elements in Theorems \ref{periodic} and \ref{aperiodic}. We have already found a use of Theorem \ref{periodic}: it was used in \cite[Theorem 3.1]{Crossed_product} to characterize Leavitt path algebras which are crossed products in terms of the properties of the underlying graphs. We also characterize graphs such that  {\em every} element of $M_E^\Gamma$ is comparable (Theorem  \ref{all_comparable}), periodic (Theorem \ref{all_periodic}), graphs such that every nonzero element of $M_E^\Gamma$ is aperiodic (Theorem \ref{all_aperiodic}), incomparable (Corollary \ref{all_incomparable}), graphs such that no nonzero element of $M_E^\Gamma$ is periodic (Corollary \ref{no_periodic}), and graphs such that no element of $M_E^\Gamma$ is aperiodic (Corollary \ref{no_aperiodic}). These characterizations comprehensively pair up the monoid and the graph properties and are summarized in the table below. In the table, c$(a)$, p$(a)$, ap$(a)$, ic$(a)$ shorten the statements that $a\in M_E^\Gamma$ is comparable, periodic, aperiodic, and incomparable respectively. The formula ``$(\exists a\neq 0)$ c$(a)$'', for example, shortens ``There is a nonzero comparable element in $M_E^\Gamma$''. 

\begin{center}
\begin{tabular}{|l|l|}\hline
{\bf Property of the graph $\Gamma$-monoid} & {\bf Property of the graph}\\ \hline\hline
\hskip1cm $(\exists a\neq 0)$ c$(a)\;\;$ = $\;(\exists a\neq 0)$ not ic$(a)$  & There is a cycle.\\ \hline
\hskip1cm $(\forall a\neq 0)$ ic$(a)\;$ = $\;(\forall a\neq 0)$ not c$(a)$  & There is no cycle. \\ \hline
\hskip1cm $(\exists a\neq 0)$ p$(a)$   & There is a cycle with no exits. \\ \hline
\hskip1cm $(\exists a)\;\;\;\;\;\;\,$ ap$(a)$  & There is a cycle with an exit.\\ \hline
\hskip1cm $(\forall a)\;\;\;\;\;\;\,$ c$(a)\;\;$ = $\;(\forall a)\;\;$ not ic$(a)$  & Condition from Theorem \ref{all_comparable} holds.\\ \hline
\hskip1cm $(\forall a)\;\;\;\;\;\;\,$ p$(a)$   & Condition from Theorem \ref{all_periodic} holds.\\ \hline
\hskip1cm $(\forall a\neq 0)$ ap$(a)$  & Condition from Theorem \ref{all_comparable} holds\\ & and every cycle has an exit.\\ \hline
\hskip1cm $(\forall a\neq 0)$ not p$(a)$   & Every cycle has exits.\\ \hline
\hskip1cm $(\forall a)\;\;\;\;\;\;\;$ not ap$(a)$  & No cycle has exits.\\ \hline
\hskip1cm $(\exists a)\;\;\;\;\;\;\;$ ic$(a)\;\;$ = $(\exists a)\;\,$ not c$(a)$  &  Condition from Theorem \ref{all_comparable} fails.\\ \hline
\hskip1cm $(\exists a)\;\;\;\;\;\;\;$ not p$(a)$   & Condition from Theorem \ref{all_periodic} fails.\\ \hline
\hskip1cm $(\exists a\neq 0)\,$ not ap$(a)$  & Condition from Theorem \ref{all_comparable} fails\\ & or there is a cycle with no exits.\\ \hline
\end{tabular}
\end{center}

In Section \ref{subsection_talented}, we relax the assumptions of statements in \cite{Talented_monoid}. In particular, we show that the main results of \cite{Talented_monoid} hold without the requirement that the graph is row-finite (Corollaries \ref{talented_corollary1}, \ref{talented_corollary2}, \ref{talented_corollary3} and the first part of Corollary \ref{talented4}). The second part of Corollary \ref{talented4} lists further properties of graphs which are preserved if the graph $\Gamma$-monoids are isomorphic. 

Our work focuses on graphs and their graph $\Gamma$-monoids. Leavitt path algebras, often mentioned in the introduction to illustrate wider context, do not appear often in the rest of the paper and no prior knowledge of Leavitt path algebras is needed for understanding our main results.

\section{Prerequisites, notation and preliminaries}\label{section_prerequisites}

In this section only, we use $\Gamma$ to denote an arbitrary group with multiplicative notation. In the other sections of the paper, $\Gamma$ stands for the infinite cyclic group generated by an element $x.$ 

\subsection{Pre-ordered \texorpdfstring{$\Gamma$}{TEXT}-monoids and \texorpdfstring{$\Gamma$}{TEXT}-groups}\label{subsection_preordered_groups}
If $M$ is an additive monoid with a left action of $\Gamma$ which agrees with the monoid operation, we say that $M$ is a {\em $\Gamma$-monoid}. If $G$ an abelian group with a left action of $\Gamma$ which agrees with the group operation, we say that $G$ is a {\em $\Gamma$-group}. Such action of $\Gamma$ uniquely determines a left $\Zset[\Gamma]$-module structure on $G,$ so $G$ is also a left $\Zset[\Gamma]$-module.  

Let $\geq$ be a reflexive and transitive relation (a pre-order) on a $\Gamma$-monoid $M$ ($\Gamma$-group $G$) such that $g_1\geq g_2$ implies $g_1 + h\geq g_2 + h$ and $\gamma g_1 \geq \gamma g_2$ for all $g_1, g_2, h$ in $M$ (in $G$) and $\gamma\in \Gamma.$ We say that such monoid $M$ is a {\em pre-ordered $\Gamma$-monoid} and that such a group $G$ is a {\em pre-ordered $\Gamma$-group}. 

If $G$ is a pre-ordered $\Gamma$-group, the set $G^+=\{x\in G\mid x\geq 0\},$ called the positive cone of $G,$ is a $\Gamma$-monoid. Any additively closed subset $M$ of $G$ which contains 0 and is closed under the action of $\Gamma,$ defines a pre-order $\Gamma$-group structure on $G$ such that $G^+=M$. Such set $G^+$ is {\em strict} if $G^+\cap (-G^+)=\{0\}$ and this condition is equivalent with the pre-order being a partial order. In this case, we say that $G$ is an {\em ordered $\Gamma$-group}. For example, $\Zset[\Gamma]$ is an ordered $\Gamma$-group with the positive cone $\Zset^+[\Gamma]$ consisting of elements $a=\sum_{i=1}^n k_i\gamma_i\in \Zset[\Gamma]$ such that $k_i\geq 0$ for all $i=1,\ldots, n.$ 

An element $u$ of a pre-ordered $\Gamma$-monoid $M$ is an \emph{order-unit} if for any $x\in M$, there is a nonzero $a\in \Zset^+[\Gamma]$ such that $x\leq au.$ An element $u$ of a pre-ordered $\Gamma$-group $G$ is an \emph{order-unit} if $u\in G^+$ and for any $x\in G$, there is a nonzero $a\in \Zset^+[\Gamma]$ such that $x\leq au.$ 

If $G$ and $H$ are pre-ordered $\Gamma$-groups, a $\Zset[\Gamma]$-module homomorphism $f\colon G\to H$ is {\em order-preserving} or {\em positive} if $f(G^+)\subseteq H^+.$ If $G$ and $H$ are pre-ordered $\Gamma$-groups with order-units $u$ and $v$ respectively, an order-preserving $\Zset[\Gamma]$-module homomorphism $f\colon G\to H$ is {\em order-unit-preserving} if $f(u)=v.$ 

A {\em $\Gamma$-order-ideal} of a pre-ordered $\Gamma$-monoid $M$ is a $\Gamma$-submonoid $I$ of $M$ such that $a\leq b$ and $b\in I$ implies $a\in I.$ If $G$ is a pre-ordered $\Gamma$-group, a $\Gamma$-subgroup $J$ of $G$ is a {\em $\Gamma$-order-ideal} of $G$ if $J\cap G^+$ is a $\Gamma$-order-ideal of $G^+$ and $J=\{x-y\mid x,y\in J\cap G^+\}$ (equivalently, $J$ is a directed and convex $\Gamma$-subgroup of $G$ using definitions of a directed set and a convex set from \cite{Goodearl_interpolation_groups_book}). The lattices of $\Gamma$-order-ideals of $G^+$ and $\Gamma$-order-ideals of $G$ are isomorphic by the map $I\mapsto \{x-y \mid x,y\in I\}$ with the inverse $J\mapsto J\cap G^+.$

\subsection{Graded rings}\label{subsection_graded_rings}
We briefly review the concept of graded rings for context only. Other than a part of the statement of Corollary \ref{talented_corollary3}, 
no result of this paper refers to graded rings or requires any knowledge of their properties.   

A ring $R$ is \emph{$\Gamma$-graded} if $R=\bigoplus_{ \gamma \in \Gamma} R_{\gamma}$ where $R_{\gamma}$ is an additive subgroup of $R$ and $R_{\gamma}  R_{\delta} \subseteq R_{\gamma\delta}$ for all $\gamma, \delta \in \Gamma$. The standard definitions of graded right $R$-modules, graded module homomorphisms and isomorphisms, and graded projective right modules can be found in \cite{NvO_book} and \cite{Roozbeh_book}. If $M$ is a graded right $R$-module and $\gamma\in\Gamma,$ the $\gamma$-\emph{shifted} graded right $R$-module $(\gamma)M$ is defined as the module $M$ with the $\Gamma$-grading given by $(\gamma)M_\delta = M_{\gamma\delta}$ for all $\delta\in \Gamma.$ 

If $R$ is a $\Gamma$-graded ring, let $\V^{\Gamma}(R)$ denote the monoid of graded isomorphism classes $[P]$ of finitely generated graded projective right $R$-modules $P$ with the direct sum as the addition operation and the left $\Gamma$-action given by $(\gamma, [P])\mapsto [(\gamma^{-1})P].$\footnote{If $M$ is a graded left $R$-module and $\gamma\in\Gamma,$ the $\gamma$-shifted graded left $R$-module $M(\gamma)$ is the module $M$ with the $\Gamma$-grading given by $M(\gamma)_\delta = M_{\delta\gamma}$ for all $\delta\in \Gamma.$  The monoid $\V^\Gamma(R)$ can be represented using the classes of left modules in which case the corresponding formula is $(\gamma, [P])\mapsto [P(\gamma)].$ Two representations are equivalent (see \cite[Section 2.4]{NvO_book} or  \cite[Section 1.2.3]{Roozbeh_book}). } In particular, the definitions and results of \cite[\S 3.2]{Roozbeh_book} carry to the case when $\Gamma$ is not necessarily abelian as it is explained in \cite[Section 1.3]{Lia_realization}.
The \emph{Grothendieck $\Gamma$-group}  $K_0^{\Gamma}(R)$ is defined as the group completion of  the $\Gamma$-monoid $\V^{\Gamma}(R)$ which naturally inherits the action of $\Gamma$ from $\V^{\Gamma}(R)$.  The monoid $\V^{\Gamma}(R)$ is a pre-ordered $\Gamma$-monoid and the group  $K_0^{\Gamma}(R)$ is a pre-ordered $\Gamma$-group for any $\Gamma$-graded ring $R$. If $\Gamma$ is the trivial group, $K_0^{\Gamma}(R)$ is the usual $K_0$-group.

\subsection{Graphs}\label{subsection_graphs}
If $E$ is a directed graph, let $E^0$ denote the set of vertices, $E^1$ the set of edges and $\so$ and $\ra$ the source and the range maps of $E.$ The graph $E$ is {\em finite} if both $E^0$ and $E^1$ are finite and $E$ is {\em row-finite} if $\so^{-1}(v)$ is finite for every $v\in E^0.$ A vertex $v\in E^0$ is a {\em sink} if $\so^{-1}(v)=\emptyset$ and a {\em source} if $\ra^{-1}(v)=\emptyset.$ A vertex of $E$ is {\em regular} if  $\so^{-1}(v)$ is finite and nonempty. 

We use the standard definitions of a {\em path}, a {\em closed simple path} and a {\em cycle} (see \cite[Definitions 1.2.2. and 2.0.2]{LPA_book}).
A path $q$ is a {\em prefix} of a path $p$ if $p=qr$ for some path $r.$ If $q=\so(p),$ then $q$ is a {\em trivial} prefix. If $r\neq \ra(p),$ then $q$ is a {\em proper} prefix. If $E$ has no cycles, $E$ is {\em acyclic}. A cycle $c$ {\em has an exit} if a vertex on $c$ emits an edge outside of $c.$ The graph $E$ satisfies {\em Condition (NE)} (and $E$ is a {\em no-exit graph} in this case) if $v$ emits just one edge for every vertex $v$ of every cycle. The graph $E$ satisfies {\em Condition (L)} if every cycle has an exit (equivalently if every closed simple path has an exit) and $E$ satisfies {\em Condition (K)} if for each vertex $v$ which lies on a closed simple path, there are at least two different closed simple paths based at $v$. An {\em infinite path} is a sequence of edges $e_1e_2\ldots$ such that $\ra(e_i)=\so(e_{i+1})$ for $i=1,2,\ldots$. Such infinite path {\em ends in a cycle} if there is a positive integer $n$ and a cycle $c$ such that $e_ne_{n+1}\hdots$ is equal to $cc\hdots.$  

If $E$ is a finite and acyclic graph, it is well-established that it has a source. Since we were not aware of a reference for this fact and we use it in the proof of Lemma \ref{core_lemma}, we provide a quick proof for it. 

\begin{lemma}
If $E$ is a finite and acyclic graph, it has a source. 
\label{sources}
\end{lemma}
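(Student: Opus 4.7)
The plan is a straightforward proof by contradiction using the pigeonhole principle. I would suppose that $E$ has no source, i.e., every vertex $v\in E^0$ satisfies $\ra^{-1}(v)\neq\emptyset$, and derive a cycle in $E$.

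First, I would fix any vertex $v_0\in E^0$ (which exists as long as $E^0$ is nonempty; if $E^0=\emptyset$ the statement is vacuous and should be excluded or handled separately). Using the assumption, I would recursively choose, for each $n\geq 0$, an edge $e_{n+1}\in\ra^{-1}(v_n)$ and set $v_{n+1}=\so(e_{n+1})$. This produces an infinite sequence $v_0,v_1,v_2,\ldots$ of vertices in the finite set $E^0$, so by the pigeonhole principle there exist indices $i<j$ with $v_i=v_j$.

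Second, I would assemble the edges $e_{i+1},e_{i+2},\ldots,e_j$ (read in the forward direction of the graph) into a closed path: by construction, $\so(e_{k+1})=v_{k+1}=\ra(e_{k+2})$ for each relevant $k$, so $e_j e_{j-1}\cdots e_{i+1}$ is a path from $v_j$ to $v_i$, which are equal. This closed path must contain a cycle (one can extract one by choosing a minimal closed sub-walk, or simply invoke the standard fact that any closed path in a directed graph contains a cycle), contradicting the assumption that $E$ is acyclic. Hence some vertex has empty range-preimage, i.e., $E$ has a source.

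The main (minor) obstacle is just being careful about the direction convention: the recursion walks backwards along edges, but the resulting closed walk must be described in the forward direction so that it qualifies as a path in $E$. No step requires more than elementary combinatorics, and no graph-monoid machinery is needed.
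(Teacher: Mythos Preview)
Your proof is correct and takes essentially the same approach as the paper: both walk backwards along edges from an arbitrary starting point and use finiteness together with acyclicity to force termination. The only cosmetic differences are that the paper frames it as a terminating search rather than a contradiction, and applies pigeonhole to edges rather than vertices.
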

\begin{proof}
If the graph $E$ does not have any edges, then each of its vertices is both a source and a sink. If $E$ has edges, pick any of them, say $e_0.$ If $\ra^{-1}(\so(e_0))$ is empty, then $\so(e_0)$ is a source. If $\ra^{-1}(\so(e_0))$ is nonempty, take $e_1\in \ra^{-1}(\so(e_0)).$ Then $e_0\neq e_1$ since otherwise $\ra(e_0)=\so(e_0)$ and $e_0$ would be a cycle. If $\ra^{-1}(\so(e_1))$ is empty, then $\so(e_1)$ is a source. If $\ra^{-1}(\so(e_1))$ is nonempty, continue the process. At any step of the process, we obtain a different edge than any of the edges considered previously otherwise $E$ has a cycle. Since $E$ is finite, this process eventually ends. If it ends at the $n$-th step, then $\so(e_n)$ is a source. 
\end{proof}

\subsection{Leavitt path algebras}\label{subsection_LPAs}
We review the concept of a Leavitt path algebra for context only. No result of this paper except one part of Theorem \ref{all_periodic} refers to  Leavitt path algebras or requires any knowledge of these algebras.   
If $K$ is any field, the \emph{Leavitt path algebra} $L_K(E)$ of $E$ over $K$ is a free $K$-algebra generated by the set  $E^0\cup E^1\cup\{e^* \mid  e\in E^1\}$ such that, for all vertices $v,w$ and edges $e,f,$

\begin{tabular}{ll}
(V)  $vw =0$ if $v\neq w$ and $vv=v,$ & (E1)  $\so(e)e=e\ra(e)=e,$\\
(E2) $\ra(e)e^*=e^*\so(e)=e^*,$ & (CK1) $e^*f=0$ if $e\neq f$ and $e^*e=\ra(e),$\\
(CK2) $v=\sum_{e\in \so^{-1}(v)} ee^*$ for each regular vertex $v.$ &\\
\end{tabular}

By the first four axioms, $L_K(E)$ is a $K$-linear span of the elements of the form $pq^*$ for paths $p$ and $q.$ 
If $L_K(E)_n$ is the $K$-linear span of $pq^*$ for paths $p$ and $q$ with $|p|-|q|=n$ where $|p|$ denotes the length of a path $p,$ then it is the $n$-component of $L_K(E)$ producing a natural grading of $L_K(E)$ by the group of integers $\Zset.$ One can also grade $L_K(E)$  by any group $\Gamma$ as follows. Any function $w\colon E^1\to \Gamma,$ called the {\em weight} function, extends by $w(e^*)=w(e)^{-1}$ for $e \in E^1$ and $w(v)=\varepsilon$ for $v\in E^0,$ and, ultimately, by $w(pq^*)=w(p)w(q)^{-1}$ for any generator $pq^*$ of $L_K(E)$ (see \cite[Section 1.6]{Roozbeh_book}). Thus, $L_K(E)$ becomes $\Gamma$-graded with $L_K(E)_\gamma$ being the $K$-linear span of the elements $pq^*$ with weight $\gamma.$ 

\subsection{The graph monoid and the Grothendieck group of a graph}\label{subsection_graph_group_nongraded_case}

If $E$ is a graph, the graph monoid $M_E$ was defined for row-finite graphs in \cite{Ara_Moreno_Pardo} and for arbitrary graphs in \cite{Ara_Goodearl}. We briefly review this definition. 

Any edge $e\in E^1$ is a partial isometry of $ee^*$ and $\ra(e)=e^*e$ so that $[ee^*]$ and $[\ra(e)]$ are the same element in $\V(L_K(E)).$ Hence, the relation below  holds in $\V(L_K(E))$ by the (CK2)-axiom if $v$ is regular. 
\begin{equation}
[v]=\sum_{e\in \so^{-1}(v)}[\ra(e)]\label{nepotrebno1}\tag{1}
\end{equation}

For any infinite emitter $v$ and any finite and nonempty $Z\subseteq \so^{-1}(v),$ one considers the element $q_Z$ representing $v-\sum_{e\in Z}ee^*.$ We refer to the elements of the form $q_Z$ as the {\em improper vertices} (and we note that this term was not used before).  When we need to emphasize that $q_Z$ is related to the infinite emitter $v$ (in the sense that $Z\subseteq \so^{-1}(v)$) we write $q_Z^v$ for $q_Z.$ Also, whenever the notation $q_Z$ appears, it is to be understood that there is an infinite emitter $v$ and that $Z$ is a finite and nonempty subset of $\so^{-1}(v).$
For any finite sets $Z$ and $W$ such that $\emptyset\subsetneq Z\subsetneq W \subsetneq \so^{-1}(v),$ it is direct to check that the relations

\begin{equation}
[v]=[q_Z]+\sum_{e\in Z}[\ra(e)]\;\;\mbox{ and }\;\;[q_Z]=[q_W]+\sum_{e\in W-Z}[\ra(e)]\label{nepotrebno2}\tag{2 and 3}
\end{equation}
also hold in $\V(L_K(E)).$
So, one aims to define $M_E$ so that the relations (1), (2 and 3) are the {\em only} relations which hold in $M_E.$ This is achieved in the following way. 

Let $F_E$ be a free commutative monoid generated by the elements indexed by the proper and improper vertices of $E.$ To be consistent with \cite{Ara_Goodearl}, \cite{Ara_et_al_Steinberg} and \cite{Talented_monoid}, we abuse the notation and refer to the generator indexed by a proper vertex $v\in E^0$ as $v$ and, similarly, to the generator indexed by $q_Z$ by $q_Z$. 
The monoid $M_E,$ called the {\em graph monoid}, is the quotient of $F_E$ with respect to the the congruence closure $\sim$ of the relation
$\to_1$ defined on $F_E-\{0\}$ by 
\[a+v \to_1 a+ \sum_{e\in s^{-1}(v)}r(e),\]
whenever $v$ is a regular vertex and $a\in F_E$ and by 
\[a+ v\to_1 a+ q_Z+\sum_{e\in Z}\ra(e)\;\;\mbox{ and }\;\;a+q_Z\to_1 a+ q_W+\sum_{e\in W-Z}\ra(e)\]
whenever $v$ is an infinite emitter and $Z$ and $W$ are finite and such that $\emptyset\subsetneq Z \subsetneq W\subsetneq \so^{-1}(v).$ 

One often considers an intermediate step of this construction and lets 
$\to$ be the transitive and reflexive closure of $\to_1$
on $F_E$ so that $\to$ is a pre-order. In this case, $\sim$ is the congruence on $F_E$ generated by the relation $\to$ (i.e. the symmetric closure of the pre-order $\to$). 

We use the notation $[v]$ for the congruence class of $v$ as an element of $M_E.$ As a side note, we add that the map $[v]\mapsto [vL_K(E)]$ extends to a pre-ordered monoid isomorphism of $M_E$ and $\V(L_K(E))$ (here $\V(L_K(E))$ is given using the finitely generated projective right modules) by \cite[Corollary 3.2.11]{LPA_book} (or \cite[Theorem 4.3]{Ara_Goodearl}). So, the Grothendieck group completion $G_E$ of $M_E$ is isomorphic to $K_0(L_K(E)).$

\subsection{The graph \texorpdfstring{$\Gamma$}{TEXT}-monoid and the Grothendieck \texorpdfstring{$\Gamma$}{TEXT}-group of a graph}\label{subsection_graph_group}

Let $\Gamma$ be a group and $w\colon E^1\to \Gamma$ be a function which we refer to as a weight determining a $\Gamma$-grading of $L_K(E).$ The following relations hold in the $\Gamma$-monoid $\V^{\Gamma}(L_K(E)).$
For every regular vertex $v,$ 
\[\gamma [v]=\sum_{e\in \so^{-1}(v)}\gamma w(e)[\ra(e)],\]
and for every infinite emitter $v$ and finite $Z$ and $W$ such that $\emptyset\subsetneq Z\subsetneq W\subsetneq \so^{-1}(v),$
\[\gamma [v]=\gamma [q_Z]+\sum_{e\in Z}\gamma w(e)[\ra(e)]\;\;\mbox{ and }\;\;\gamma [q_Z]=\gamma [q_W]+\sum_{e\in W-Z}\gamma w(e)[\ra(e)].\]

To adapt the original construction of $M_E$ to this setting, the authors of \cite{Ara_et_al_Steinberg} replaced generators $v$ and $q_Z$ of $F_E$ by $v(\gamma)$ and $q_Z(\gamma)$ for any $\gamma\in\Gamma$ and considered a free commutative monoid $F_E^\Gamma$ with the action of $\Gamma$ given by $\delta v(\gamma)=v(\delta\gamma)$ and $\delta q_Z(\gamma)=q_Z(\delta\gamma)$ for all $\gamma,\delta\in\Gamma.$ 
Then $M^\Gamma_E$ is the quotient of $F_E^\Gamma$ subject to the congruence closure $\sim$ of relation $\to_1$ defined just as in the previous section but with the three relations modified accordingly so that 
\[a+\gamma v \to_1 a+ \sum_{e\in s^{-1}(v)}\gamma w(e)r(e),\]
whenever $v$ is a regular vertex and $a\in F_E$ and by 
\[a+ \gamma v\to_1 a+ \gamma q_Z+\sum_{e\in Z}\gamma w(e)\ra(e)\;\;\mbox{ and }\;\;a+\gamma q_Z\to_1 a+\gamma q_W+\sum_{e\in W-Z}\gamma w(e)\ra(e)\]
whenever $v$ is an infinite emitter and $Z$ and $W$ are finite and such that $\emptyset\subsetneq Z\subsetneq W\subsetneq \so^{-1}(v)$. 

One downside of this approach is that $M^{\Gamma}_E$ is still considered to be a commutative monoid, not a commutative $\Gamma$-monoid. For example, if $E$ is a single vertex, $M^\Gamma_E$ is a direct sum of $|\Gamma|$-many copies of $\Zset^+$ (with a natural action of $\Gamma$) instead of being a single copy of $\Zset^+[\Gamma].$ Also, the abundance of generators can make some proofs less direct. Because of this, we adopt a simpler and more intuitive approach here: we let $M^{\Gamma}_E$ be defined by the same set of generators as when the weight function is trivial, but we let $F_E^\Gamma$ be a free commutative $\Gamma$-monoid, not a free commutative monoid. In this case, if $E$ is a single vertex, then $M_E$ is a single copy of $\Zset^+$ and $M^\Gamma_E$ is a single copy of $\Zset^+[\Gamma].$ The equivalence of ours and the construction from \cite{Ara_et_al_Steinberg} can be seen considering the graph covering $\ol E$ of $E$. 

So, we let $F_E^\Gamma$ be a free commutative $\Gamma$-monoid generated by proper and improper vertices. A nonzero element $a$ of $F_E^\Gamma$ has 
a {\em representation}, unique up to a permutation, as $\sum_{j=1}^n \alpha_ig_i$, where $g_i$ are {\em different} generators of $F_E^\Gamma$ and $\alpha_i\in \Zset^+[\Gamma]$. The {\em support} $\supp(a)$ of $a$ is the set $\{g_i\mid i=1,\ldots, n\}.$   

Let $k_\gamma\in \Zset^+$ be the coefficient of $\gamma\in \Gamma$ in $\alpha_i\in \Zset^+[\Gamma]$ in the above representation. By writing each $k_\gamma>0$ as the sum $1+1+\ldots+1,$ one obtains the format 
$a=\sum_{j=1}^m \gamma_j g_j$ for some positive integer $m$ and $\gamma_j\in \Gamma, j=1,\ldots, m.$ We allow the generators $g_j$ and $g_k$ to be possibly equal for $j\neq k$ in this form, also unique up to a permutation. We refer to it as a {\em normal representation} of $a$ and we say that each summand $\gamma_j g_j$ of this representation is a {\em monomial} of $a$. We can still write $\supp(a)=\{g_j \mid  j=1,\ldots, m\}$ because any possible repetition of an element does not impact $\supp(a)$ as a set.  

For example, if $\Gamma$ is the infinite cyclic group generated by $x,$ $v$ is a vertex of $E,$ and $a=xv+3v,$ then $(x+3)v$ is a representation of $a$ and $xv+v+v+v$ is a normal representation of $a.$

To shorten some statements, we say that a vertex $v,$ considered as a generator of $F_E^\Gamma,$ is {\em regular} if $v$ is regular as a vertex of $E.$ We also say that a generator $v\in F_E^\Gamma$ is a sink or an infinite emitter, if $v$ is a sink or an infinite emitter as a vertex of $E$. An element $a\in F_E^\Gamma$ is {\em regular} if every element of $\supp(a)$ is regular.

We define the {\em graph $\Gamma$-monoid} $M^\Gamma_E$ as a quotient of $F^\Gamma_E$ subject to the congruence closure $\sim$ of the relation $\to_1$ on $F_E^\Gamma-\{0\}$ defined by (A1), (A2) and (A3) below 
for any $\gamma\in \Gamma$ and $a\in F_E^\Gamma.$  
\begin{enumerate}
\item[(A1)] If $v$ is a regular vertex, then  
\[a+\gamma v\to_1 a+\sum_{e\in s^{-1}(v)}\gamma w(e)\ra(e).\]
\item[(A2)] If $v$ is an infinite emitter and $Z$ a finite and nonempty subset of $\so^{-1}(v),$ then  
\[a+\gamma v\to_1 a+\gamma q_Z+\sum_{e\in Z}\gamma w(e)\ra(e).\]
\item[(A3)] If $v$ is an infinite emitter and $Z\subsetneq W$ are finite and nonempty subsets of $\so^{-1}(v),$ then 
\[a+\gamma q_Z\to_1 a+\gamma q_W+\sum_{e\in W-Z}\gamma w(e)\ra(e).\]
\end{enumerate}

So, if $\to$ is the reflexive and transitive closure of $\to_1$ 
on $F_E^\Gamma,$ then $\sim$ is the congruence on $F^\Gamma_E$ generated by the relation $\to$. This means that the relation $a\sim b$ holds for some $a,b\in F_E^\Gamma-\{0\}$ if and only if there is a nonnegative integer $n$ and $a=a_0,\ldots,a_n=b\in F_E^\Gamma-\{0\}$ such that $a_i\to_1 a_{i+1}$ or $a_{i+1}\to_1a_i$ for all $i=0,\ldots,n-1.$ We refer to such $n$ as the {\em length of the sequence} $a_0, \ldots, a_n$ and we write $a\sim^n b$ to emphasize the length. In particular, if $a\to b,$ the sequence can be chosen so that $a_i \to_1 a_{i+1}$ for all $i=0,\ldots, n-1.$ In this case, we write $$a\to^n b.$$ Note that $a\to^1 b$ is just $a\to_1 b$ and that $a\to^0b$ is just $a=b.$ 

To shorten the notation in multiple proofs, if $g$ is a generator of $F_E^\Gamma,$ and one of the three axioms is applied to $g,$ we use $\ra(g)$ to denote the resulting term on the right side of relation $\to_1:$ $\sum_{e\in \so^{-1}(v)}w(e)\ra(e)$ if $g=v$ is a regular vertex, $q_Z+\sum_{e\in Z}w(e)\ra(e)$ for some finite and nonempty subset $Z$ of $\so^{-1}(v)$ if $g=v$ is an infinite emitter, or $q_W+\sum_{e\in W-Z}w(e)\ra(e)$ for some finite $Z$ and $W$ such that $\emptyset\subsetneq Z\subsetneq W\subsetneq \so^{-1}(v)$ if $g=q^v_Z$ for an infinite emitter $v.$ The element $\ra(g)$ is uniquely determined just for (A1). However, for a {\em fixed} use of (A2) or (A3) which is not changed within a proof, the notation $\ra(g)$ is a well-defined shortening. Such uniform treatment enables us to condense some proofs by avoiding considerations of three separate cases depending on which axiom is used.   

Another benefit of our approach is that the proofs of many known statements in the case when $\Gamma$ is trivial directly transfer to the case when $\Gamma$ is not trivial. For example, if $[g]$ denotes the congruence class of a generator $g$ of $F_E^\Gamma,$ 
the map $[g]\mapsto [gL_K(E)]$ extends to a pre-ordered $\Gamma$-monoid isomorphism of $M^\Gamma_E$ and $\V^\Gamma(L_K(E))$ and the proof of 
the case when $\Gamma$ is trivial (see, for example, \cite[Theorem 4.3]{Ara_Goodearl}) directly adapts to the case when $\Gamma$ is arbitrary. 
In \cite[Proposition 5.7]{Ara_et_al_Steinberg}, this monoid isomorphism is shown to exist by considering the graph covering. 

Lemma \ref{confluence} greatly simplifies many proofs which involve handling relation $\sim.$ 
Parts of this lemma can be shown by directly generalizing the proofs of \cite[Lemmas 5.6 and 5.8]{Ara_Goodearl}. We add some new elements in part (1) of Lemma \ref{confluence} to control the length of sequences for certain relations. We also note that part (2), the Confluence Lemma, is shown for general $\Gamma$ in \cite[Lemma 5.9]{Ara_et_al_Steinberg} but using the graph covering. The Confluence Lemma is key for showing that the monoid $M_E^\Gamma$ has the refinement property (see \cite[Proposition 5.9]{Ara_Goodearl}).  

\begin{lemma}
Let  $E$ be a graph, $\Gamma$ a group, $w\colon E^1\to\Gamma$ a weight function and $a,b\in  F^\Gamma_E-\{0\}$. 
\begin{enumerate}[\upshape(1)]
\item (The Refinement Lemma) If $a=a'+a''$ for some $a', a''\in F^\Gamma_E$ and if $a\to^n b$, then $b$ has summands $b', b''\in F^\Gamma_E$ and $n$ has summands  $i,j$ such that  $b=b'+b'',$ $i+j=n,$  $a'\to^i b',$ and $a''\to^j b''.$ 
\item (The Confluence Lemma) The relation $a\sim b$ holds if and only if $a \to c$ and $b\to c$ for some $c \in F_E-\{0\}.$  
\end{enumerate}
\label{confluence}
\end{lemma}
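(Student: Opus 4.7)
For part (1), the Refinement Lemma, I would proceed by induction on $n$. The case $n=0$ is immediate: take $b'=a'$ and $b''=a''$ with $i=j=0$. For the inductive step, write $a\to_1 a_1\to^{n-1}b$. A single application of $\to_1$ modifies exactly one monomial $\gamma g$ appearing in $a$; since $a=a'+a''$ is a decomposition of the normal representation of $a$, this monomial belongs to exactly one of the summands, say $a'$. Then $a'\to_1 a'_1$ for some $a'_1$ and $a_1=a'_1+a''$. Applying the inductive hypothesis to $a_1=a'_1+a''$ together with $a_1\to^{n-1}b$ yields $b=b'+b''$, $i'+j=n-1$, $a'_1\to^{i'}b'$, $a''\to^{j}b''$; setting $i=i'+1$ completes the step. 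The length accounting is automatic because each $\to_1$-step is charged to exactly one of the two chains.

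For part (2), the Confluence Lemma, the ``if'' direction is immediate: $a\to c$ and $b\to c$ both imply $a\sim c$ and $b\sim c$, hence $a\sim b$. For the ``only if'' direction, the crux is establishing confluence of $\to$ itself, after which the zigzag defining $\sim$ can be collapsed by induction on its length. The plan is first to verify a one-step diamond for $\to_1$: if $a\to_1 x$ and $a\to_1 y$, then there exists $z$ with $x\to^{\leq 1}z$ and $y\to^{\leq 1}z$. I would split into cases. If the two rules act on different monomials of $a$, each rule survives unchanged in the result of the other, and applying the complementary rule gives a common $z$ in one step from each side. If both rules act on the same monomial, the only way this can yield different results is when the monomial is $\gamma v$ for an infinite emitter $v$ and two applications of (A2) use distinct finite subsets $Z_1,Z_2\subseteq\so^{-1}(v)$. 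Setting $W=Z_1\cup Z_2$, one applies (A3) to $\gamma q_{Z_i}$ (when $Z_i\subsetneq W$) or uses that $y$ already has the form of the target (when, say, $Z_1\subsetneq Z_2=W$); either way $x$ and $y$ are driven to the common element $a-\gamma v+\gamma q_W+\sum_{e\in W}\gamma w(e)\ra(e)$ in at most one step each.

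Once this one-step diamond is in place, confluence of $\to$ follows by a standard induction: to prove that $a\to^m x$ and $a\to^n y$ have a common $\to$-successor, one fills in the $m\times n$ grid of diamonds one cell at a time. Finally, given $a\sim b$, fix a zigzag $a=a_0,a_1,\dots,a_n=b$ realising the congruence and induct on $n$. For $n=0$, take $c=a$. For the step, the inductive hypothesis applied to $a\sim a_{n-1}$ yields $c_0$ with $a\to c_0$ and $a_{n-1}\to c_0$. If $a_{n-1}\to_1 b$, apply confluence of $\to$ at $a_{n-1}$ to the pair $a_{n-1}\to b$ and $a_{n-1}\to c_0$ to obtain $c$ with $b\to c$ and $c_0\to c$, hence $a\to c$; if instead $b\to_1 a_{n-1}$, take $c=c_0$ since $b\to a_{n-1}\to c_0$.

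The main obstacle is the diamond verification in the one subcase where both applied rules are (A2) on the same infinite-emitter monomial with incomparable subsets $Z_1,Z_2$; this is precisely the bookkeeping that makes (A3) essential and where the case analysis is most delicate. Everything else is either a direct symmetric-monomial argument or routine induction. The length refinement in (1) comes essentially for free from the proof but is what allows later results to track the precise number of rewriting steps.
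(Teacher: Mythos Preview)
Your argument for part (1) is essentially identical to the paper's.

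For part (2) your approach is correct but genuinely different from the paper's primary proof. The paper first treats \emph{finite} graphs, where there are no infinite emitters and hence only (A1) is available; this makes the relevant step deterministic, so the diamond issue you isolate simply does not arise. The paper then passes to arbitrary graphs via the direct-limit / relative-graph construction of Ara--Goodearl, reducing any instance of $a\sim b$ to one living in a finite graph $E'_S$. Your route instead establishes local confluence (subcommutativity) directly for arbitrary graphs, then lifts to global confluence by the standard grid argument, and finally collapses the zigzag. This is self-contained and avoids the external direct-limit machinery; the paper in fact remarks, right after its proof, that one can do exactly this by ``discussing possibilities that the relation $a_1\to_1 a$ \ldots\ is obtained by (A2) or (A3).'' What the paper's detour buys is that the non-deterministic cases are never confronted head-on.

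One small gap in your case analysis: you assert that the only way two $\to_1$-steps on the same monomial can disagree is via two (A2)-applications at an infinite emitter $\gamma v$. There is a second case---two applications of (A3) to the same $\gamma q_Z$ with different targets $W_1,W_2$---which you should also close (take $W=W_1\cup W_2$ and apply (A3) once more on each side; the bookkeeping is identical to your (A2) case). With that addition, your diamond verification is complete and the grid-filling goes through without any termination hypothesis, since subcommutativity (both sides close in at most one step) suffices for confluence.
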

\begin{proof}
We show (1) by induction on $n.$ If $n=0$ then $a=b$ and we can take $b'=a'=a=b,$ $b''=a''=0,$ and $i=j=0.$ Assuming the induction hypothesis, let $a=a_0\to_1 a_1\to_1\ldots \to_1 a_n=b$ and let $\gamma g$ be a monomial of $a$ so that $a_1$ is obtained by replacing $\gamma g$ by $\gamma\ra(g).$  Since $a=a'+a'',$ $\gamma g$ is a summand of either $a'$ or $a''.$ Say it is $a'$ (the case when it is $a''$ is analogous) and let $a'=c+\gamma g$ for some $c\in F_E^\Gamma.$ For $a_1'=c+\gamma \ra(g)$ and $a_1''=a'',$ $a'\to^1 a_1'$ and $a''\to^0 a_1''.$ The induction hypothesis implies the existence of $b', b''\in F^\Gamma_E$ and $i,j$ such that such that $b=b'+b'',$ $i+j=n-1,$ $a_1'\to^i b'$ and $a_1''\to^j b''.$ Thus, $i+1+j=n,$ $a'\to^1 a_1'\to^i b',$ and $a''\to^0 a_1''\to^j b''$ and so $a'\to^{i+1}b'$ and $a''\to^j b''.$  
 
The direction $\Leftarrow$ of (2) is direct since if $a\to c$ and $b\to c,$ then $a\sim c$ and $b\sim c$ so that $a\sim b.$ First, we show the direction $\Rightarrow$ of (2) for finite graphs using induction on $n$ for $a\sim^n b.$ If $n=0,$ $a=b$ and we can take $c=a=b.$ Assuming the induction hypothesis, let $a\sim^n b,$ $a_0=a, a_n=b$ and let $a_i\to_1 a_{i+1}$ or  $a_{i+1}\to_1 a_i$ for some $a_i\in F_E^\Gamma$ for $i=0,\ldots, n-1.$ Since $a_1\sim^{n-1} b,$ there is $d$ such that $a_1\to d$ and $b\to d.$ Then either $a\to_1 a_1$ or $a_1\to_1 a.$ In the first case, we can take $c=d.$  In the second case, there is a monomial $\gamma g$ of $a_1$ so that $a_1=a'+\gamma g$ for some $a'$ and $a=a'+\gamma \ra(g).$ By part (1), 
$d=d'+d''$ for some $d'$ and $d''$ such that $a'\to d'$ and $\gamma g\to^l d''$ for some $l\geq 0$. If $l=0,$ then $d''=\gamma g$ so $d=d'+\gamma g.$ Let $c=d'+\gamma \ra(g).$ Then we have that $a=a'+\gamma\ra(g)\to d'+\gamma\ra(g)=c$ and $b\to d=d'+\gamma g\to_1 d'+\gamma \ra(g)=c.$ 

If $l$ is positive, we use the assumption that $E$ is finite to conclude that there are no infinite emitters so that $g$ is necessarily a regular vertex and $a_1\to_1 a$ is an application of (A1.) Hence, the relation $\gamma g\to d''$ necessarily decomposes as $\gamma g\to_1 \gamma\ra(g)\to d''$ and we have that $a_1=a'+\gamma g\to_1 a=a'+\gamma\ra(g)\to d'+d''=d.$ So, in this case we can also take $c=d.$ 

To complete the proof in the case when $E$ is an arbitrary graph, we use the argument of the proof of \cite[Lemma 5.9]{Ara_et_al_Steinberg} relying on \cite[Construction 5.3]{Ara_Goodearl}. If $R(E)$ denotes the set of regular vertices of $E$, the pair $(E, R(E)),$ considered as an element of an appropriate category from \cite[Section 3]{Ara_Goodearl}, can be represented as a direct limit of pairs $(E',S)$ where $E'$ is a finite subgraph of $E$ and $S$ is a subset of $R(E')$ (see \cite[Proposition 3.3]{Ara_Goodearl} for details). The pair $(E',S)$ gives rise to the relative graph $E'_S$ of $E'$ with respect to $S$ such that the bijection on the generators of the corresponding free $\Gamma$-monoids produces a natural $\Gamma$-monoid isomorphism (see \cite[Theorem 3.7]{Muhly_Tomforde} and the graded version in \cite[Lemma 2.2]{Lia_no-exit}). Hence, if $a,b\in F_E^\Gamma$ correspond to elements $a'$ and $b'$ of $F^\Gamma_{E'_S}$ for some finite subgraph $E'$ and some subset $S$ of $R(E'),$ then the relation $a\sim b$ holds in $F_E^\Gamma$ if and only if $a'\sim b'$ holds in $F_{E'_S}^\Gamma.$ Assuming that $a\sim b$ holds, we have that $a'\sim b'$ holds. By the proven claim for finite graphs, there is $c'\in F_{E'_S}^\Gamma$ such that the relations $a'\to c'$ and $b'\to c'$ hold in $F^\Gamma_{E'_S}.$ If $c\in F_E^\Gamma$ corresponds to $c',$ these relations imply that $a\to c$ and $b\to c$ hold in $F_E^\Gamma.$ 
\end{proof}

One can also show the Confluence Lemma directly, by considering an arbitrary graph $E$ and discussing possibilities that the relation $a_1\to_1 a$ in the above proof is obtained by (A2) or (A3).     

We conclude this section by a remark: the Graded Classification Conjecture is false if the pre-ordered $\Gamma$-monoids (equivalently $\Gamma$-groups) of the graphs are replaced by the free $\Gamma$-monoids. Indeed, let $E$ and $F$ be the graphs below and $\Gamma$ be the group of integers. 
$$\xymatrix{ {\bullet}\ar@(ul,dl) \ar@/^1pc/[r]  & {\bullet} \ar@/^1pc/[l]  \ar@(ur,dr)}\hskip4cm \xymatrix{ {\bullet}\ar@(ul,dl) \ar@(ur,dr)}$$

The graph $E$ is an out-split of the graph $F$, so the Leavitt path algebras of $E$ and $F$ are graded isomorphic (see \cite[Theorem 2.8]{Abrams_et_al_classification}). Hence, $M_E^\Gamma$ and $M_F^\Gamma$ are isomorphic and so are $G_E^\Gamma$ and $G_F^\Gamma.$ Alternatively, one can show the existence of these isomorphisms by noting that $M_E^\Gamma$ and $M_F^\Gamma$ are both isomorphic to $\Zset^+[\frac{1}{2}]$ and, consequently, $G_E^\Gamma$ and $G_F^\Gamma$ are both isomorphic to $\Zset[\frac{1}{2}]$. However, $F_E^\Gamma$ and $F_F^\Gamma$ are not isomorphic as $\Gamma$-monoids since one has two while the other has one generator. This example illustrates that the $\Gamma$-monoid $F_E^\Gamma$ of a graph $E$ is informative only when considered together with the relation $\sim.$

\section{Connectivity}\label{section_connectivity}

In this section and the rest of the paper, $\Gamma=\{x^n \mid  n\in \Zset\}$ is the infinite cyclic group with generator $x$ and $E$ is an arbitrary graph. To simplify the terminology in some of the proofs, we say that $n$ is the {\em degree} of the monomial $x^{n}g$ where $g$ is a generator of $F_E^\Gamma.$  First, we characterize the relation $\to$ in terms of the graph-theoretic properties (Proposition \ref{connecting}). 

If $v$ and $w$ are vertices of $E$ and $p$ a path from $v$ to $w,$ one can apply (A1) or (A2) to the vertices on $p$ to obtain that $v\to x^{|p|}w+a$ for some  $a\in F_E^\Gamma.$ Indeed, if $p$ is trivial, then $v=w$ and one can take $a=0.$ If $p=e_1e_2\ldots e_n,$ one can apply (A1) if $v$ is regular and (A2) if it is not, and then apply (A1) to $\ra(e_1)$ if it is regular and (A2) if it is not. Continuing this process, one obtains a sequence for \[v\to x^{|p|}w+a\] for some $a\in F_E^\Gamma,$ where the ``change'' $a$ reflects the existence of bifurcations from $p.$ For example, in the graph below
with $p=f,$ we have that $v\to xw+xu$ so $a=xu.$ 
$$\xymatrix{{\bullet}^{u} & {\bullet}^{v} \ar[r]^f\ar[l]_e & {\bullet}^{w}}$$

We generalize this process to improper vertices also. The terminology introduced below allows uniform treatment of generators of $F_E^\Gamma$ of both types and enables us to express the comparability properties in terms of the properties of the graph $E$.

\begin{definition} Let $g$ and $h$ be generators of $F_E^\Gamma.$ 
We say that {\em $g$ connects to $h$ by a path $p$}  (written $g\cto^p h$) if one of the following conditions hold.
\begin{enumerate}[(i)]
\item  $g=v$ and $h=w$ are proper vertices and $p$ is a path from $v$ to $w.$ In this case, $v\to x^{|p|}w+a$ holds for some $a\in F_E^\Gamma$ as we pointed out above.  
 
\item $g=v$ is a proper vertex,  $h=q^w_Z$ for an infinite emitter $w$ and some $Z,$ and $p$ is a path from $v$ to $w.$ In this case, $v\to x^{|p|}w+a'\to x^{|p|}q_Z+a$ for some $a'\in F_E^\Gamma$ and $a=a'+\sum_{e\in Z}x^{|p|+1}\ra(e).$ Note that if $v=w$ and $p$ is trivial then $\to$ can be chosen to be a single application of (A2). If $v=w$ and $p$ has positive length, then $v$ is necessarily on a cycle. 

\item $g=q^v_Z$ for an infinite emitter $v$ and some $Z,$ $h=w$ is a proper vertex, and $p=eq$ is a path from $v$ to $w$ such that $e\notin Z.$ In this case, $$q_Z\to q_{Z\cup\{e\}}+x\ra(e)\to  q_{Z\cup\{e\}}+x^{|p|}w+a'=x^{|p|}w+a$$ 
for some $a'\in F_E^\Gamma$ and for $a=a'+ q_{Z\cup\{e\}}.$ If $v=w,$ then $v$ is on a cycle.

\item $g=q^v_Z$ for some $v$ and $Z,$ $h=q^w_W$ for some $w$ and $W,$ $p$ is a path from $v$ to $w,$ and one of the following two scenarios hold. 
\begin{itemize}
\item If $p$ is trivial, then $v=w$ and $Z\subseteq W.$ If $Z=W,$ then $q_Z\to x^0q_Z$ and if $Z\subsetneq W$ and $a=\sum_{e\in W-Z}x\ra(e),$  then
\[q_Z\to x^0q_W+\sum_{e\in W-Z}x\ra(e)=x^0q_W+a.\]

\item If $p$ has positive length, then $p=eq$ for some $e\notin Z.$ In this case, $$q_Z\to q_{Z\cup\{e\}}+x\ra(e)\to  q_{Z\cup\{e\}}+x^{|p|}w+a'\to  q_{Z\cup\{e\}}+x^{|p|}q_W+\sum_{f\in W}x^{|p|+1}\ra(f)+a'=x^{|p|}q_W+a$$
for some $a'\in F_E^\Gamma$ and $a=a'+q_{Z\cup\{e\}}+\sum_{f\in W}x^{|p|+1}\ra(f).$ If $v=w,$ then $v$ is on a cycle.  
\end{itemize}
\end{enumerate}
\label{definition_connecting}
\end{definition}

Definition \ref{definition_connecting} enables us to deal with every generator of $F_E^\Gamma$ in a uniform way. In particular, in any of the above four cases, we have that \[g\to x^{|p|}h+a\]
for some element $a\in F_E^\Gamma$ and a path $p.$ In this case, we say that {\em $h$ is obtained from $g$ following the path $p$}. The element $a$ reflects the existence of bifurcations from $p$. In Corollary \ref{converse_connecting}, we show the converse: $g\to x^{n}h+a$ implies that $g\cto^p h$ for a path $p$ of length $n$.  

We say that $g$ {\em connects to} $h$, written $g\cto h,$ if there is a path $p$ such that $g\cto^p h.$ If $v$ and $w$ are vertices, $v\cto w$ is usually written $v\geq w$ (see \cite[Definition 2.0.4]{LPA_book}). However, we reserve the relation $\geq$ for the order on the monoid $M_E^\Gamma.$ 
It is direct to check that $\cto$ is reflexive and transitive. 

Note that a proper vertex $v$ is on a cycle if and only if $v$ connects to $v$ by a path of positive length. Definition \ref{definition_connecting} enables us to talk about improper vertices being on cycles: we say that any generator $g$ of $F_E^\Gamma$ {\em is on a cycle} if $g$ connects to $g$ by a path of positive length. We say that $g$ {\em is on an exit from a cycle $c$} if $g$ is not on $c$ and there is a generator $h$ of $F_E^\Gamma$ which is on $c$ such that $h$ connects to $g.$ By Definition \ref{definition_connecting}, $q^v_Z$ is on a cycle if and only if there is $e\in \so^{-1}(v)-Z$ and a path $p$ with $\ra(p)=v, \so(p)=\ra(e)$ such that $ep$ is a cycle.  

If $a\to b$ and $a=\sum_{i=1}^k x^{m_i}g_i$ and $b=\sum_{j=1}^l x^{t_j}h_j$ are normal representations of $a$ and $b$ respectively, repeated use of  
the Refinement Lemma \ref{confluence}(1) ensures the existence of a partition $\{I_1, \ldots, I_k\}$ of $\{1, \ldots, l\}$ and summands $b_i$ of $b$ such that $b=\sum_{i=1}^k b_i,$ $b_i=\sum_{j\in I_i}x^{t_j}h_j,$ and $x^{m_i}g_i\to b_i.$ 
This implies that $t_j\geq m_i$ for all $j\in I_i.$ Proposition \ref{connecting} implies the existence of paths 
$p_{ij}$ with $|p_{ij}|=t_j-m_i$ and $g_i\cto^{p_{ij}} h_j$ in this case. We introduce this idea of partitioning $b$ according to $a$ if $a\to b$ in  Proposition \ref{connecting} and use it again in Section \ref{subsection_partition}. Proposition \ref{connecting} describes the relation $a\to b$ in terms of the properties of the generators in the supports of $a$ and $b$ and the length of the paths connecting them.   

\begin{proposition}
Let $a,b\in F_E^\Gamma-\{0\}$ and $a=\sum_{i=1}^k x^{m_i}g_i$ and $b=\sum_{j=1}^l x^{t_j}h_{j}$ be normal representations of $a$ and $b$ respectively. The following conditions are equivalent.
\begin{enumerate}[\upshape(1)]
\item The relation $a\to b$ holds. 
 
\item There is a partition $\{I_1, \ldots, I_k\}$ of $\{1, \ldots, l\}$ and finitely many paths $p_{ij},  j\in I_i, i=1,\ldots, k,$ such that $g_i\cto^{p_{ij}}h_j,$  $|p_{ij}|=t_j-m_i$ for all $j\in I_i, i=1,\ldots, k,$ and  
$$b=\sum_{j=1}^l x^{t_j}h_{j}=\sum_{i=1}^k \sum_{j\in I_i}x^{m_i+|p_{ij}|}h_{j}.$$ If $p$ is a prefix of $p_{ij}$ and $v=\ra(p),$ let
\[P_p=\{e\in \so^{-1}(v)\mid e \mbox{ is on } p_{ij'}\mbox{ for some }j'\in I_i\}.\] Then the following hold. 
\begin{enumerate}[\upshape(i)]
\item If $v$ is regular and $P_p$ nonempty, then $P_p=\so^{-1}(v).$

\item If $v$ is an infinite emitter and $P_p$ nonempty, then there is $j'\in I_i$ such that $h_{j'}=q_Z^v$ for some $Z$ such that $P_p\subseteq Z.$

\item The relation $t_j=|p|+m_i$ holds if and only if $p=p_{ij}$ and $h_j=q_Z^v$ for some $Z$ implies $P_p\subseteq Z.$
\end{enumerate}
\end{enumerate}
\label{connecting} 
\end{proposition}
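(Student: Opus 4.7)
The plan is to establish the two directions of the equivalence with the Refinement Lemma \ref{confluence}(1) as the main technical tool for the forward direction, and a direct construction for the reverse direction.

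For the direction $(1)\Rightarrow(2)$, the first move is to reduce to the case where $a$ is a single monomial $x^m g$. Repeated application of the Refinement Lemma to $a\to^n b$ yields a decomposition $b=\sum_{i=1}^k b_i$ with $x^{m_i}g_i\to b_i$, so it suffices to show that for each $i$ separately, $b_i$ admits the required list of paths $p_{ij}$ emanating from $g_i$; the partition $\{I_1,\ldots,I_k\}$ is then read off from the normal representations of the $b_i$. Fixing $i$, I would induct on the minimal length $n$ such that $x^{m_i}g_i\to^n b_i$. The base case $n=0$ forces $b_i=x^{m_i}g_i$, and a single trivial path suffices. For the inductive step, the only monomial available to be acted upon in the first step is $x^{m_i}g_i$ itself, so the first application of an axiom splits $g_i$ into its children as dictated by (A1), (A2) or (A3). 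Applying the Refinement Lemma again to this intermediate element and invoking the induction hypothesis, the paths from each child extend backward by one edge (or by a trivial step in the case where a $q_Z$ appears) to yield the desired paths $p_{ij}$ from $g_i$.

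The properties (i), (ii), (iii) are then verified by tracking, at each vertex $v$ visited along a prefix $p$ of some $p_{ij}$, which axiom is invoked. Property (i) reflects that (A1) at a regular vertex uses all of $\so^{-1}(v)$ at once. Property (ii) reflects that the first time an infinite emitter $v$ is processed along any $p_{ij'}$, (A2) or a nested (A3) must produce a $q_Z^v$-monomial which either appears as some $h_{j'}$ (terminating a path $p_{ij'}$ with $h_{j'}=q_Z^v$) or is itself processed further, in which case the relevant $Z$ in $h_{j'}$ grows. Property (iii) simply says that the bookkeeping is tight: $p_{ij}$ records the entire path from $g_i$ to $h_j$, so strict prefixes have strictly smaller length, and the $Z$ associated with a terminal $h_j=q_Z^v$ absorbs exactly the bifurcating edges $P_{p_{ij}}$ accumulated along the way.

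For the direction $(2)\Rightarrow(1)$, I would build the sequence $a\to b$ by handling each index $i$ separately and then combining (this is the ``converse'' use of the Refinement Lemma). For a fixed $i$, I would traverse the union of the paths $p_{ij}$, $j\in I_i$, starting from $g_i$, applying (A1) at each regular vertex $v=\ra(p)$ on a prefix $p$ (condition (i) guarantees that every edge out of $v$ is needed), and applying (A2) or (A3) at each infinite emitter $v$ encountered, with the subset dictated by the edges in $P_p$ (condition (ii) guarantees this is consistent with the terminal $h_{j'}=q_Z^v$ that contains them). Terminating each branch using condition (iii) to match the correct $Z$ when $h_j$ is improper, this produces exactly $\sum_{j\in I_i}x^{m_i+|p_{ij}|}h_j$ with no leftover monomials; summing over $i$ gives $a\to b$.

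The main obstacle is the bookkeeping around infinite emitters, where (A2) and (A3) can be iterated along nested subsets $Z\subsetneq W\subsetneq\so^{-1}(v)$ and where $q_Z$-generators appear simultaneously in supports and as intermediate vertices. Verifying that condition (ii) correctly captures the situation in which several paths $p_{ij'}$ bifurcate out of the same infinite emitter $v$ while at most one ends at $q_Z^v$, and that this $Z$ must contain all the bifurcating edges of $P_p$ for every prefix $p$ with $\ra(p)=v$, requires careful case analysis matching the four cases of Definition \ref{definition_connecting} (especially case (iv)). A secondary subtlety is the inductive step of $(1)\Rightarrow(2)$ when the axiom is applied at an intermediate $q_Z$-monomial that was produced earlier along some $p_{ij}$: one must verify that this refinement (replacing $q_Z$ by $q_W$ with $Z\subsetneq W$, together with extra range-monomials) extends the partition compatibly with both (ii) and (iii).
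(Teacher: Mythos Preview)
Your proposal is correct and follows essentially the same strategy as the paper's proof: both directions are handled by induction, with the three axioms (A1), (A2), (A3) treated case by case at each inductive step, and the Refinement Lemma supplying the needed decompositions. The only organizational difference is that you reduce to a single monomial first and induct on the \emph{first} application of an axiom, whereas the paper inducts directly on $a\to^n b$ and analyzes the \emph{last} step $a_{n-1}\to_1 b$; the bookkeeping around infinite emitters that you flag as the main obstacle is indeed the crux in both versions.
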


Before presenting the proof, let us motivate it by some examples. 

\begin{example}
\begin{enumerate}
\item In the graph below,   $u\to xv$ and $w\to xv$ so $u+w\to xv+xv.$ For this last relation, $k=2, l=2$ and one can take $I_1=\{1\}, I_2=\{2\},$ $p_{11}=e,$ and $p_{22}=f$ so condition (2) holds.
$$\xymatrix{{\bullet}^{u} \ar[r]^e & {\bullet}^{v} & {\bullet}^{w}\ar[l]_f}$$
By condition (2) also, $u\to x^2v+a$ fails for any $a$ since there is no path of length 2 from $u$ to $v.$ 
 
\item In the graph below, $v\to xu+xw.$ For this relation, $k=1, l=2$ and one can take $I_1=\{1,2\},$ $p_{11}=e,$ and $p_{12}=f$ so condition (2) holds. 
$$\xymatrix{{\bullet}^{u} & {\bullet}^{v} \ar[r]^f\ar[l]_e & {\bullet}^{w}}$$
Although $v$ connects to $w$ by a path of length one, $v\to \alpha w$ fails for any $\alpha\in \Zset^+[\Gamma]$
since the path from $v$ to $w$ has a bifurcation towards $u$ so $u$ must appear in any ``result'' obtained following a path from $v$ to $w$ by condition (2)(i). 

\item The relation $v_0\to a$ fails for any $a$ with $\supp(a)$ consisting of sinks only in the graph below.  
$$\xymatrix{   
\bullet     & \bullet     & \bullet  &   & \\   
\bullet_{v_0} \ar[r]\ar[u] & \bullet_{v_1} \ar[r]  \ar[u] & \bullet_{v_2} \ar[r]\ar[u] & \bullet_{v_3} \ar@{.>}[r] \ar@{.>}[u] &}$$
Indeed, all paths from $v_0$ to finitely many sinks have a bifurcation on a path which does not end in any sink. Hence, if $v_0\to a$ then $a$ necessarily has $v_i$ in its support for some $i\geq 0.$  

\item If $v\to^n a$ for $n>0$ in the graph below, 
$$\xymatrix{{\bullet}^{v} \ar@{.} @/_1pc/ [r] _{\mbox{ } } \ar@/_/ [r] \ar [r] \ar@/^/ [r] \ar@/^1pc/ [r] & {\bullet}^{w}}$$
condition (2) implies the existence of an improper vertex in $\supp(a).$ Hence, $v\to \alpha w$ fails for any $\alpha\in  \Zset^+[\Gamma]$.
\end{enumerate}
\label{example_connecting}
\end{example}
 
\begin{proof}
Let us show direction $\Rightarrow$ by induction on $n$ for $a\to^nb.$ If $n=0,$ then $a=b$ so $k=l$ and one can permute the monomials in the normal representation of $b$ if necessary to get that $t_i=m_i$ for all $i=1,\ldots, k.$ In this case, one can take $I_i=\{i\}$ and $p_{ii}$ to be the trivial path which connects $g_i$ to $g_i$ for all $i=1,\ldots, k.$ In this case any prefix $p$ of $p_{ij}$ is trivial and relation $t_i=|p_{ij}|+m_i=|p|+m_i$ holds. Since $P_p=\emptyset,$ conditions (i) to (iii) hold.   

Considering the case $n=1$ shortens the arguments in the inductive step. If $n=1,$ reorder the terms of the normal representation of $a$ if necessary to assume that $b$ is obtained by applying an axiom to $x^{m_k}g_k$ and let $x^{m_k}\ra(g_k)$ denote the result of this application. Thus, $g_k$ is not a sink. Reorder the terms of the normal representation of $b$ to have that $b=\sum_{i=1}^{k-1} x^{m_i}g_i+x^{m_k}\ra(g_k)$ and let $x^{m_k}\ra(g_k)=\sum_{j\in J}x^{t_j}h_j$ for some finite subset $J$ of $\{1,\ldots,l\}.$  Let $I_i=\{i\}$ for $i=1,\ldots, k-1$ and $p_{ii}$ be the trivial path which connects $g_i$ and $g_i$ if $k>1.$ Let $I_k=J$ and $p_{kj}$ be the path (of length zero or one) which connects $g_k$ and $h_j$. Since $g_k$ is not a sink, there are just three possible cases, listed below, for $g_k$.  

\begin{enumerate}[1.]
\item $g_k$ is a regular vertex $v.$ In this case, $|J|=|\so^{-1}(v)|$ and we can label the elements of $\so^{-1}(v)$ such that $h_j=\ra(e_j)$ for $j\in J.$ Then $x^{t_j}h_j=x^{m_k+1}\ra(e_j)$ and so $t_j=m_k+1.$ Let $p_{kj}=e_j.$
If $p$ is a prefix of $e_j,$ then either $p=v$ in which case $t_j>|p|+m_i$ and $P_p=\so^{-1}(v),$ or $p=e_j$ in which case $t_j=|p|+m_i.$ In this case, if $\ra(e_j)$ is regular and $\ra(e_j)\neq v,$ then $P_p=\emptyset$ and if $\ra(e_j)=v,$ then $P_p=\so^{-1}(v).$   
 
\item $g_k$ is an infinite emitter $v.$ In this case, $\ra(g_k)=q_Z+\sum_{e\in Z}x\ra(e)$ for some $Z$ and $|J|=|Z|+1.$ We can label the elements of $Z$ such that $h_j=\ra(e_j)$ for $j\in J-{j_0}$ and $h_{j_0}=q_Z.$ Thus, $t_{j_0}=m_k$ and $t_j=m_k+1$ for $j\in J-\{j_0\}.$ Let $p_{kj_0}=v$ and $p_{kj}=e_j$ so that $|p_{kj}|=t_j-m_k$ for all $j\in J.$  
If $p$ is a prefix of $p_{kj_0}=v,$ then $p=v$, $t_j=|p|+m_i,$ $h_{j_0}=q_Z^v$ and $P_p=Z.$ 
If $p$ is a prefix of $e_j,$ then either $p=v$ or $p=e_j.$ In the first case, $t_j>|p|+m_i,$ and condition (ii) holds with $j=j_0.$ In the second case, $t_j=|p|+m_i,$ if $\ra(e_j)$ is regular, then $\ra(e_j)\neq v$ and so $P_p=\emptyset.$  
For $j\neq j_0,$ $h_j$ is a proper vertex and $h_{j_0}=q_Z$ with $P_p=Z.$ Thus, condition (iii) holds. 

\item $g_k$ is an improper vertex $q^v_Z.$ In this case, $\ra(g_k)=q_W+\sum_{e\in W-Z}x\ra(e)$ for some $W\supsetneq Z$ and $|J|=|W-Z|+1.$ We can label the elements of $W-Z$ such that $h_j=\ra(e_j)$ for $j\in J-{j_0}$ and $h_{j_0}=q_W.$ Thus, $t_{j_0}=m_k$ and $t_j=m_k+1$ for $j\in J-\{j_0\}.$ Let $p_{kj_0}=v$ and $p_{kj}=e_j$ so that $|p_{kj}|=t_j-m_k$ for all $j\in J.$  
If $p$ is a prefix of $p_{kj_0}=v,$ then $p=v$, $t_j=|p|+m_i,$ $h_{j_0}=q_W,$ and $P_p=W-Z\subseteq W.$ 
If $p$ is a prefix of $e_j,$ then either $p=v$ or $p=e_j.$ In the first case, $t_j>|p|+m_i,$ and condition (ii) holds with $j=j_0.$ In the second case, $t_j=|p|+m_i,$ if $\ra(e_j)$ is regular, then $\ra(e_j)\neq v$ and so $P_p=\emptyset.$ 
For $j\neq j_0,$ $h_j$  is a proper vertex and  $h_{j_0}=q_W$ with $P_p=W-Z\subseteq W.$  Thus, condition (iii) holds.
\end{enumerate}
By construction, $$b=\sum_{i=1}^{k-1} x^{m_i}g_i+\sum_{j\in J}x^{t_j}h_j =\sum_{i=1}^{k-1} x^{m_i+|p_{ii}|}g_i+\sum_{j\in I_k}x^{m_k+|p_{kj}|}h_j=
\sum_{i=1}^k \sum_{j\in I_i}x^{m_i+|p_{ij}|}h_{j}.$$
 
Assuming the induction hypothesis, let us consider a sequence $a_0=a\to_1 a_1\to_1\ldots\to_1 a_n=b.$ Let $a_{n-1}=\sum_{j'=1}^{l'} x^{t'_{j'}}h'_{j'}.$ By the induction hypothesis, there is a partition $\{I'_1, \ldots, I'_k\}$ of $\{1, \ldots, l'\}$ and finitely many paths $p_{ij'},  j'\in I'_i, i=1,\ldots, k,$ such that $g_i\cto^{p_{ij'}}h'_{j'},$ $|p_{ij'}|=t'_{j'}-m_i,$ and the required conditions hold for any prefix of $p_{ij'}$ for all $j'\in I'_i$ and $i=1,\ldots, k.$ The element $b$ is obtained from $a_{n-1}$ by application of one of the axioms to exactly one monomial $x^{t'_{j'}}h'_{j'}.$ Reordering the terms of $a_{n-1}$ if necessary, we can assume that it is the last one $x^{t'_{l'}}h'_{l'}.$ Reorder the terms of $b$ if necessary to have that $b=\sum_{j'=1}^{l'} x^{t'_{j'}}h'_{j'}+x^{t'_{l'}}\ra(h'_{l'})$ and let $x^{t'_{l'}}\ra(h'_{l'})=\sum_{j\in J}x^{t_j}h_j$ for some finite subset $J$ of $\{1,\ldots,l\}.$  
By construction, we have that $l=l'+|J|$ and that $l'$ is in $I'_{i_0}$ for exactly one $i_0.$ So we let
\[I_i=I'_i, \mbox{ if }i\neq i_0,\mbox{ and }I_{i_0}=J.\] 
If $i\neq i_0,$ for each $j\in I_i,$ $x^{t_j}h_j=x^{t'_{j'}}h'_{j'}$ for exactly one $j'\in I'_i.$ So, for such $j$ and $j',$ we let $p_{ij}=p_{ij'}$ so that $|p_{ij}|=|p_{ij'}|=t'_{j'}-m_i=t_j-m_i.$ 

For $i_0,$ we let $p_{i_0j}$ be the concatenation of $p_{i_0l'}$ and the path $p_{l'j}$
constructed as in the case $n=1$ for $h'_{l'}$ and $h_j$ for $j\in J=I_{i_0}.$ Since $g_{i_0}\cto^{p_{i_0l'}} h'_{l'}$ and $h'_{l'}\cto^{p_{l'j}}h_j$ for all $j\in J=I_{i_0},$ we have that $g_{i_0}\cto^{p_{i_0j}}h_j$ for all $j\in I_{i_0}.$
We have that $|p_{i_0l'}|=t'_{l'}-m_{i_0}$ and $|p_{l'j}|=t_j-t'_{l'}$ and so  
\[|p_{i_0j}|=|p_{i_0l'}|+|p_{l'j}|=t'_{l'}-m_{i_0}+t_j-t'_{l'}=t_j-m_{i_0}\;\;\mbox{ and }\] 
$$b=\sum_{j=1}^l x^{t_j}h_{j}=\sum_{j'=1}^{l'} x^{t'_{j'}}h'_{j'}+\sum_{j\in J}x^{t_j}h_j=
\sum_{i=1, i\neq i_0}^k \sum_{j\in I_i}x^{m_i+|p_{ij}|}h_{j}+ \sum_{j\in I_{i_0}}x^{m_{i_0}+|p_{i_0j}|}g_{i_0}=
\sum_{i=1}^k \sum_{j\in I_i}x^{m_i+|p_{ij}|}h_{j}.$$

If $p$ is a prefix of $p_{i_0j},$ then it is either a prefix of $p_{i_0l'}$ or $p=p_{i_0l'}q$ for some prefix $q$ of $p_{l'j}$ and one of the following three cases holds: first, $p$ is a proper prefix of  $p_{i_0l'},$ second, $q$ is a proper prefix of $p_{l'j}$ or, third, $q=p_{l'j}$ thus $p=p_{i_0j}.$ 
In the first case, $t'_{l'}>|p|+m_{i_0}$ and so $t_j\geq t'_{l'}>|p|+m_{i_0}.$ In the second case, $t_j>|q|+t'_{l'}=|q|+|p_{i_0l'}|+m_{i_0}=|p|+m_{i_0}.$ In the last case, $t_j=|p|+m_{i_0}$ and if $h_j=q_Z^v$ for some $Z$ then $P_p \subseteq Z$ since this condition holds for $a_{n-1}\to_1 b$ by the first induction step. In all three cases, if $\ra(p)$ is regular and $P_p\neq\emptyset,$ we can use induction hypothesis to conclude that $P_p=\so^{-1}(\ra(p))$ and, if $\ra(p)$ is an infinite emitter $v$ and $P_p\neq\emptyset,$  we can use induction hypothesis to conclude that there is $j'$ such that $h_{j'}=q_Z^v$ for some $Z$ such that $P_p\subseteq Z.$  Thus, in any case, conditions (i) to (iii) hold. 

Let us use induction on $k$ to show direction $\Leftarrow.$ If $k=1$ and $a=x^mg,$ let $p_j, j=1,\ldots, l$ denote the paths which exist by condition (2). We show the claim using induction on $n=\sum_{j=1}^l|p_j|.$ If this length is zero, then we claim that $b=a.$
Indeed, since $|p_j|=0,$ the relation $g\cto^{p_j}h_j$ implies that either $g=h_j,$ or $g=v$ for some infinite emitter $v$ and $h_j=q_Z^v,$ or that $g_i=q_W^v$ and $h_j=q_Z^v$ for some $v$ and $W\subsetneq Z.$ However, in the second and third case we would have that $P_v\subseteq Z$ by condition (2) so there would have to be some paths $p_{j'}$ of length at least one which cannot happen since $n=0$. Hence, $a=b$ and, thus, $a\to b.$  

Assuming the induction hypothesis, let $n=\sum_{j=1}^l|p_j|>0.$ Since $n>0,$ $a\neq b$ and there is $j=1,\ldots, l$  such that $|p_j|>0.$ If $p_j=e_0p$ for an edge $e_0$ and a path $p,$ let $v=\so(e_0).$ Since $e_0\in P_v,$ $P_v\neq \emptyset.$ We have exactly three possibilities for $g,$ listed below.   

\begin{enumerate}[1.]
\item $g=v$ is regular. Since $P_v$ is nonempty, $P_v=\so^{-1}(v)$ by (i). Let $a_1=\ra(v)=\sum_{e\in P_v}x^{m+1}\ra(e).$ Note that $a\to_1 a_1$ by (A1). We claim that condition (2) holds for $a_1$ and $b.$ 

For $e\in P_v=\so^{-1}(v),$ there is some $j=1,\ldots, l$ such that $p_j=eq_j$ for some path $q_j$ and so the set $$I_e=\{j\in \{1, \ldots, l\}\mid  e\mbox{ is the first edge of }p_j\}$$ is nonempty. 
Since the first edge of $p_j$ is in $P_v=\so^{-1}(v)$ for any $j,$ we have that $\bigcup_{e\in P_v} I_e=\{1, \ldots, l\}.$ If $j\in I_e\cap I_{e'},$ then $e=e'$ since the first edge of a path is unique.  
As $t_j=|p_j|+m,$ we have that $t_j=|q_j|+1+m$ and $$b=\sum_{j=1}^l x^{t_j}h_j=\sum_{j=1}^l x^{|p_j|+m}h_j=\sum_{j=1}^l x^{|q_j|+1+m}h_j.$$
If $q$ is a prefix of $q_j,$ then $eq$ is a prefix of $p_j$ and conditions (i) to (iii) hold for $q$ because they hold for $eq.$ Thus, we have that $a_1\to b$ by induction hypothesis. Since $a\to_1 a_1,$ we have that $a\to b.$

\item $g=v$ is an infinite emitter. In this case, let $a_1=x^m q_{P_v}+\sum_{e\in P_v}x^{m+1}\ra(e).$ So that $a\to_1 a_1$ by (A2). Since $P_v\neq\emptyset,$ there is $j$ such that $h_j=q_Z^v$ for some $Z$ with $P_v\subseteq Z$ by (ii). By (iii), such $j$ can be found so that $t_j=|p_j|+m.$
Reorder the terms of $b$ if necessary so that we can assume that $j=1.$ We check that condition (2) holds for $a_1$ and $b.$ 

For $e\in P_v$, there is $j=2,\ldots, l$ such that $p_j=eq_j$ for some path $q_j$ and so the sets $I_e, e\in P_v,$ defined as in the previous case, are nonempty and mutually disjoint. Let $I_1=\{1\}$ and $q_1=p_1.$ Since the first edge of $p_j$ is in $P_v$ for every $j=2,\ldots, l,$ $\bigcup_{e\in P_v} I_e=\{2, \ldots, l\},$ so $I_1\cup \bigcup_{e\in P_v} I_e=\{1, 2, \ldots, l\}.$ Hence, $\{I_1\}\cup\{I_e| e\in P_v\}$ is a partition of $\{1,\ldots,l\}.$
For $j=2,\ldots, l,$ $t_j=|p_j|+m=|q_j|+1+m,$ $t_1=|p_1|+m=|q_1|+m,$ and 
$$b=\sum_{j=1}^l x^{|p_j|+m}h_j=x^{|q_1|+m}h_1+\sum_{j=2}^l x^{|q_j|+1+m}h_j.$$
If $q$ is a prefix of $q_j$ for $j>1,$ then $e_iq$ is a prefix of $p_j$ and conditions (i) to (iii) hold for $q$ because they hold for $e_iq.$
If $q$ is a prefix of $q_1=p_1,$ then the requirements also hold. Thus, $a_1\to b$ by induction hypothesis. Since $a\to_1 a_1,$ we have that $a\to b.$

\item $g=q_Z^v$ for some $Z.$ In this case, $P_v$ is a proper superset of $Z.$ Let $a_1=x^m q_{P_v}+\sum_{e\in P_v-Z}x^{m+1}\ra(e)$ so that $a\to_1 a_1$ by (A3). By (ii), there is $j$ such that $h_j=q_W^v$ for some $W$ such that $P_v\subseteq W$ and, by (iii), such $j$ can be found so that $t_j=|p_j|+m.$ Reorder the terms of $b$ if necessary so that we can assume that $j=1.$ We check that condition (2) holds for $a_1$ and $b.$ 

For $e\in P_v-Z,$ there is some $j=2,\ldots, l$ such that $p_j=eq_j$ for some path $q_j$ and so the sets $I_e, e\in P_v-Z,$ defined as in the previous cases, are nonempty and mutually disjoint. If $I_1=\{1\}$ and $q_1=p_1,$ one shows that $\{I_1\}\cup\{I_e \mid e\in P_v-Z\}$ is a partition of $\{1,\ldots,l\}$ as in the previous case. Since $t_j=|p_j|+m=|q_j|+1+m$ for $j=2,\ldots, l$ and $t_1=|p_1|+m=|q_1|+m,$
$b=\sum_{j=1}^l x^{|p_j|+m}h_j=x^{|q_1|+m}h_1+\sum_{j=2}^l x^{|q_j|+1+m}h_j.$ The requirements on prefixes of $q_j$ can be checked just as in the previous case. Thus, we have that $a_1\to b$ by induction hypothesis. Since $a\to_1 a_1,$ we have that $a\to b.$
\end{enumerate}

This concludes the proof of the case $k=1$. Assuming the induction hypothesis, let us show the claim for $a$ with $k$ terms in its normal decomposition. Note that if condition (2) holds, then it holds for $a'=\sum_{i=1}^{k-1}x^{m_i}g_i$ and $b'=\sum_{i=1}^{k-1}\sum_{j\in I_i}x^{m_i+|p_{ij}|}h_j$ and for $x^{m_k}g_k$ and $\sum_{j\in I_k}x^{m_k+|p_{kj}|}h_j.$ By the induction hypothesis, we have that $a'\to b'$ and that $x^{m_k}g_k\to \sum_{j\in I_k}x^{m_k+|p_{kj}|}h_h.$ Hence, $a=a'+x^{m_k}g_k\to b=b'+\sum_{j\in I_k}x^{m_k+|p_{kj}|}h_j.$   
\end{proof}

We show two corollaries of Proposition \ref{connecting} which we use in Section \ref{subsection_stationary}. Recall that Definition \ref{definition_connecting} implies that $g\cto^ph$ implies $g\to x^{|p|}h+a$ for some $a.$ By the first corollary, the converse also holds. 

\begin{corollary} Let $g,h$ be generators of $F_E^\Gamma,$ $a\in F_E^\Gamma,$ and $m$ a nonnegative integer. Then $g\to x^mh+a$ holds if and only if there is a path $p$ of length $m$ such that $g\cto^ph.$ 
\label{converse_connecting} 
\end{corollary}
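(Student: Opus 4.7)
The plan is to derive this corollary as a direct consequence of Proposition \ref{connecting} together with Definition \ref{definition_connecting}. The backward direction is essentially built into Definition \ref{definition_connecting}: each of the four cases (i)--(iv) of that definition was verified by the author to produce $g \to x^{|p|}h + a'$ for an explicit $a' \in F_E^\Gamma$. So specializing $|p| = m$ and renaming $a'$ as $a$ immediately gives the implication.

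For the forward direction, my plan is to apply Proposition \ref{connecting} with the trivial normal representation $a = x^0 g$ on the left (so $k = 1$) and a normal representation of $b := x^m h + a$ on the right in which the summand $x^m h$ is singled out as one distinguished monomial $x^{t_{j_0}} h_{j_0}$ with $t_{j_0} = m$ and $h_{j_0} = h$. This is always possible by the convention recorded earlier in Section \ref{subsection_graph_group}, namely that a normal representation allows the same generator to appear in several monomials; so even if $a$ already contains occurrences of $x^m h$, we simply treat one further copy as our distinguished $x^m h$. Proposition \ref{connecting} then furnishes, for each monomial $x^{t_j}h_j$ of $b$, a path $p_{1j}$ with $g \cto^{p_{1j}} h_j$ satisfying $|p_{1j}| = t_j - 0 = t_j$. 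Taking $p := p_{1 j_0}$ yields a path of length $m$ with $g \cto^p h$, as required.

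There is no serious obstacle here: the corollary is essentially the $k = 1$ instance of Proposition \ref{connecting} repackaged to extract a single term from the right-hand side. The only small point that needs care is the bookkeeping around normal representations, which allows us to isolate the specific summand $x^m h$ in $b$ even when the generator $h$ already appears (with the same or different power of $x$) elsewhere in the decomposition of $a$.
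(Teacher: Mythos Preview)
Your proposal is correct and matches the paper's own argument: the forward direction is the $k=1$ case of Proposition \ref{connecting}, extracting the path $p_{1j_0}$ corresponding to the distinguished monomial $x^mh$, and the backward direction is read off directly from Definition \ref{definition_connecting}. The extra care you take with the normal-representation bookkeeping (allowing repeated monomials so that $x^mh$ can be singled out even if $h$ already occurs in $a$) is a helpful clarification but not a departure from the paper's approach.
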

\begin{proof}
If $g\to x^mh+a$, condition (2) of Proposition \ref{connecting} holds by Proposition \ref{connecting}, so there is a path $p$ of length $m$ from $g$ to $h.$ The converse holds by Definition \ref{definition_connecting} (see the sentence following Definition \ref{definition_connecting}). 
\end{proof}

By the next corollary, if $a\to b,$ then each monomial of $b$ is obtained by a monomial of $a.$ This complements the Confluence Lemma. 

\begin{corollary}
If $g$ is a generator of $F_E^\Gamma,$ $a,b\in F_E^\Gamma,$ and $m$ an integer, then $a\to x^mg+b$ implies that there is $h\in\supp(a)$ and $k\leq m$ such that $x^kh$ is a monomial of $a$ and that $x^kh\to x^mg+c$ for some $c\in F_E^\Gamma.$   
\label{decomposing_the_initial_term_lemma}  
\end{corollary}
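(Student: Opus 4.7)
The plan is to apply Proposition \ref{connecting} to decompose the derivation $a\to x^mg+b$ into pieces corresponding to individual monomials of $a$. Write $a=\sum_{i=1}^k x^{m_i}g_i$ in normal representation, and write $x^mg+b=\sum_{j=1}^l x^{t_j}h_j$ in normal representation as well. Since $x^mg$ is itself a monomial in normal form, there is some index $j_0\in\{1,\ldots,l\}$ with $t_{j_0}=m$ and $h_{j_0}=g$.

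By Proposition \ref{connecting}, there exists a partition $\{I_1,\ldots,I_k\}$ of $\{1,\ldots,l\}$ and paths $p_{ij}$ with $g_i\cto^{p_{ij}}h_j$ and $|p_{ij}|=t_j-m_i$. The index $j_0$ lies in exactly one of these parts; call it $I_{i_0}$. This yields a path $p_{i_0 j_0}$ with $g_{i_0}\cto^{p_{i_0 j_0}} g$ and $|p_{i_0 j_0}|=m-m_{i_0}$, which in particular forces $m_{i_0}\leq m$.

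Next I would apply Corollary \ref{converse_connecting} to the connection $g_{i_0}\cto^{p_{i_0 j_0}} g$ to obtain an element $c'\in F_E^\Gamma$ such that $g_{i_0}\to x^{m-m_{i_0}}g+c'$. Acting by $x^{m_{i_0}}\in\Gamma$ (which preserves $\to$ because the axioms (A1)--(A3) are $\Gamma$-equivariant) gives
\[x^{m_{i_0}}g_{i_0}\to x^m g + x^{m_{i_0}}c'.\]
Setting $h=g_{i_0}$, $k=m_{i_0}$, and $c=x^{m_{i_0}}c'$ then yields the conclusion, since $x^{m_{i_0}}g_{i_0}$ is by construction a monomial of $a$ in its normal representation and $h\in\supp(a)$.

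There is essentially no hard step: the content is already packaged into Proposition \ref{connecting} and Corollary \ref{converse_connecting}. The only point that needs a small amount of care is distinguishing a monomial of $a$ from a summand of its support: because the normal representation allows repetitions among the $g_i$, the particular monomial $x^{m_{i_0}}g_{i_0}$ picked out by the partition is genuinely a monomial of $a$ (not merely an element congruent to one), which is exactly what the statement requires.
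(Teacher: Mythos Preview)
Your proof is correct and follows essentially the same approach as the paper: apply Proposition \ref{connecting} to locate a monomial $x^kh$ of $a$ connecting to $g$ by a path of length $m-k$, then use Corollary \ref{converse_connecting} to convert this back into the relation $x^kh\to x^mg+c$. Your version is simply more explicit in spelling out the indexing and the $\Gamma$-equivariance.
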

\begin{proof}
If $a\to x^mg+b$ holds, Proposition \ref{connecting} guarantees the existence of a monomial $x^kh$ of $a$ and a path $p$ such that $k+|p|=m$ and such that $h\cto^p g.$ Hence, $m-k=|p|\geq 0$ and $x^kh\to x^{k+|p|}g+c=x^mg+c$ for some $c\in F_E^\Gamma$ by Corollary \ref{converse_connecting}.
\end{proof}

\subsection{Connectivity of the supports}\label{subsection_connectivity_supports} Next, we associate the relation $a\to b$ to the properties of the supports of $a$ and $b$. 
The next definition is condition (2) of Proposition \ref{connecting} stripped down from any mention of degrees.

\begin{definition}
Let $G$ and $H$ be finite and nonempty sets of generators of $F_E^\Gamma.$ We write $G\to H$ if there are $k\geq |G|$ and $l\geq |H|$ such that the elements of $G$ and $H$ can be indexed as $g_1,\ldots, g_k$ and $h_1,\ldots, h_l$ (with repetitions allowed) respectively and there is a partition $\{I_1, \ldots, I_k\}$ of $\{1,\ldots, l\}$ and finitely many paths $p_{ij},  j\in I_i, i=1,\ldots, k,$ such that $g_i\cto^{p_{ij}}h_j$ for all $j\in I_i, i=1,\ldots, k$ and such that if $p$ is a prefix of $p_{ij}$ and $P_p$ is as in condition (2) of Proposition \ref{connecting} then (2)(i) and (2)(ii) of Proposition \ref{connecting} and condition (iii) below hold. 
\begin{enumerate}[\upshape(i)]
\item[(iii)] If $v$ is an infinite emitter and $h_j=q_Z^v$ for some $Z,$ then $P_{p_{ij}}\subseteq Z.$ 
\end{enumerate}
\label{definition_connecting_support}
\end{definition}

\begin{corollary}
\begin{enumerate}[\upshape(1)]
\item If $a,b\in F_E^\Gamma-\{0\},$ then $a\to b$ implies $\supp(a)\to\supp(b).$
 
\item Let $a,b\in F_E^\Gamma-\{0\}$ be such that $\supp(a)\to \supp(b).$ Then, there is $c\in F_E^\Gamma-\{0\}$ such that $\supp(c)\subseteq \supp(b)$ and that $a\to c.$ 
\end{enumerate}
\label{corollary_connecting} 
\end{corollary}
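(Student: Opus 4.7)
The plan is to derive both parts directly from Proposition \ref{connecting}, with part (1) an essentially direct extraction and part (2) a one-monomial-at-a-time construction.

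For part (1), I would fix normal representations $a=\sum_{i=1}^{k'} x^{m_i} g_i$ and $b=\sum_{j=1}^{l'} x^{t_j} h_j$ and invoke Proposition \ref{connecting} to obtain a partition $\{I_1,\dots,I_{k'}\}$ of $\{1,\dots,l'\}$ together with paths $p_{ij}$, $j\in I_i$, verifying the three sub-conditions of (2). The lists $g_1,\dots,g_{k'}$ and $h_1,\dots,h_{l'}$ are indexings of $\supp(a)$ and $\supp(b)$ with repetitions and with $k'\geq|\supp(a)|$ and $l'\geq|\supp(b)|$, so the same partition and paths witness $\supp(a)\to\supp(b)$: conditions (i) and (ii) of Definition \ref{definition_connecting_support} coincide with (2)(i) and (2)(ii) of Proposition \ref{connecting}, and condition (iii) of Definition \ref{definition_connecting_support} is exactly the second clause of (2)(iii) of Proposition \ref{connecting} read at $p=p_{ij}$.

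For part (2), I would fix witnessing data for $\supp(a)\to\supp(b)$ (an indexing $g_1,\dots,g_k$ of $\supp(a)$, an indexing $h_1,\dots,h_l$ of $\supp(b)$, a partition $\{I_1,\dots,I_k\}$, and paths $p_{ij}$) together with a normal representation $a=\sum_{i=1}^{k''} x^{m_i} g'_i$. For each $i\in\{1,\dots,k''\}$ pick $\phi(i)\in\{1,\dots,k\}$ with $g_{\phi(i)}=g'_i$, which is possible since $g'_i\in\supp(a)$. I would then apply the converse direction of Proposition \ref{connecting} to the single-monomial element $x^{m_i}g'_i$ with target
\[c_i\;=\;\sum_{j\in I_{\phi(i)}} x^{m_i+|p_{\phi(i),j}|}\,h_j,\]
using the trivial one-block partition $\{I_{\phi(i)}\}$ and the paths $p_{\phi(i),j}$. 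Conditions (2)(i) and (2)(ii) of Proposition \ref{connecting} transfer verbatim from conditions (i) and (ii) of Definition \ref{definition_connecting_support} applied at the block $\phi(i)$; for (2)(iii), the equality $t_j=|p|+m_i$ holds precisely at $p=p_{\phi(i),j}$ by construction of $c_i$, and the constraint $P_{p_{\phi(i),j}}\subseteq Z$ when $h_j=q_Z^v$ is condition (iii) of Definition \ref{definition_connecting_support}. This produces $x^{m_i}g'_i\to c_i$, and summing over $i$ (using compatibility of $\to_1$ with addition) yields $a\to c$ for $c=\sum_i c_i$, with $\supp(c)\subseteq\{h_1,\dots,h_l\}=\supp(b)$ and $c\neq 0$ because $a\neq 0$ and the blocks $I_{\phi(i)}$ of a partition are nonempty.

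The main subtlety, and essentially the only place the argument could go wrong, is the mismatch in part (2) between the arbitrary-repetition indexing of $\supp(a)$ permitted by Definition \ref{definition_connecting_support} and the fixed list of monomials in a normal representation of $a$. Working one monomial at a time and selecting a single $\phi(i)$ per monomial anchors each application of Proposition \ref{connecting} to a single block $I_{\phi(i)}$ of the given partition, so the set $P_p$ used in the verification is exactly the one from the support relation; a direct bijective matching between the two indexings would instead be forced either to merge blocks (which could enlarge $P_p$ and break condition (2)(i) of Proposition \ref{connecting}) or to split monomials of $a$ (which is not in general possible). This indirection is the one technical move, after which both parts are routine.
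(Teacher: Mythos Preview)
Your proposal is correct and follows essentially the same approach as the paper. The paper's proof of part (2) likewise selects, for each monomial $x^{m_i}g_i$ of $a$, an index $i'$ from the support-witness data with $g_{i'}=g_i$ (your $\phi(i)$), sets $c_i=\sum_{j'\in I_{i'}}x^{m_i+|p_{i'j'}|}h_{j'}$, obtains $x^{m_i}g_i\to c_i$ from Proposition~\ref{connecting}, and sums; your added verification of conditions (2)(i)--(iii) and your remark on why one must work monomial-by-monomial rather than bijectively match indexings are sound elaborations of steps the paper leaves implicit.
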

\begin{proof}

Part (1) directly follows from Proposition \ref{connecting}. To show part (2), assume that $a=\sum_{i=1}^{k} x^{m_i}g_i$ and $b=\sum_{j=1}^{l} x^{t_j}h_j$ be such that $\supp(a)\to \supp(b).$ Let $m,n$ be the cardinalities of $\supp(a)$ and $\supp(b)$ respectively and $k'\geq m, l'\geq n,$ $\{I_1, \ldots, I_{k'}\}$ and $p_{i'j'}, j'\in I_{i'}, i'=1,\ldots, k'$ be as in Definition \ref{definition_connecting_support} for $\supp(a)$ and $\supp(b)$. Then, we let  $$a'=\sum_{i'=1}^{k'} g_{i'},\;\; \;\;b'_{i'}=\sum_{j'\in I_{i'}}x^{|p_{i'j'}|}h_{j'}\mbox{ for }i'=1,\ldots, k',\;\;\mbox{ and }\;\;b'=\sum_{i'=1}^{k'}b'_{i'}.$$ By construction, $\supp(a')=\supp(a),$ $\supp(b')=\supp(b)$ and $g_{i'}\to b_{i'}$ so that $a'\to b'$ holds by Proposition \ref{connecting}. For any $i=1,\ldots, k,$ there is $i'=1,\ldots, k'$ such that $g_i=g_{i'}.$ For such $i,$ let 
$$c_i=\sum_{j'\in I_{i'}}x^{m_i+|p_{i'j'}|}h_{j'}\;\;\mbox{ and let }\;\;c=\sum_{i=1}^{k}c_i.$$ We have that $\supp(c)\subseteq \supp(b')=\supp(b)$ and $x^{m_i}g_i\to c_i$ so that $a=\sum_{i=1}^{k} x^{m_i}g_i\to c=\sum_{i=1}^{k}c_i.$ The relation $x^{m_i}g_i\to c_i$ also implies that $c_i\neq 0$ so $c\neq 0.$ 
\end{proof}

We note that the converse of part (1) of Corollary \ref{corollary_connecting} does not have to hold. Also, for an element $c$ as in part (2) of Corollary \ref{corollary_connecting}, the relation $b\sim c$ does not have to hold even if $\supp(c)=\supp(b)$. Indeed, in the graph below,  $v\to xw$ so $\{v\}\to \{w\}.$
$$\xymatrix{{\bullet}^{v} \ar[r] & {\bullet}^{w}}$$
However, for $a=v$ and $b=w,$ we have that $\supp(a)\to\supp(b)$ but $a\to b$ fails since there are no paths of length zero from $v$ to $w.$ If $c=xw,$ then $\supp(c)=\supp(b),$ but we do not have that $b=w\sim c=xw$ since $w$ is a sink and the relation $w\sim d$ for some $d$ implies $d=w$ or $d=x^{-1}v$ by the Confluence Lemma. Also, using Theorem \ref{periodic}, it is direct that $w\sim xw$ cannot hold since $w$ is not a periodic element.    

\subsection{Connecting using (A1) only}\label{subsection_strong_connecting}

To emphasize that $a\to b$ is such that only (A1) is used, we write $a\sto b.$ If $E$ is a row-finite graph, then $\sto$ is just the relation $\to.$ If $V$ is a finite and nonempty set of regular vertices and $W$ a finite and nonempty set of proper vertices such that $V\to W$, we write $V\sto W.$ Corollary \ref{corollary_connecting} implies the corollary below. 

\begin{corollary}
\begin{enumerate}[\upshape(1)]
\item Let $a,b\in F_E^\Gamma-\{0\}$ such that $a\sto b$ and that $\supp(a)$ consists of regular vertices. Then $\supp(a)\sto \supp(b).$  
 
\item Let $a,b\in F_E^\Gamma-\{0\}$ be such that $\supp(a)\sto \supp(b).$ Then, there is $c\in F_E^\Gamma-\{0\}$ such that $\supp(c)\subseteq \supp(b)$ and that $a\sto c.$
\end{enumerate}
\label{strong_connecting}
\end{corollary}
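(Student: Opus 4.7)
The plan is to deduce both parts from Corollary \ref{corollary_connecting} by tracking which axioms are applied when the supports have the specified form. The backbone is two basic observations. First, if $a'$ consists of regular vertices and $a'\sto b'$ in one step, then (A1) replaces some regular vertex $v$ by $\sum_{e\in\so^{-1}(v)}w(e)\ra(e),$ a sum of proper vertices; by induction on the length of the $\sto$-sequence, $\supp(b)$ consists of proper vertices whenever $\supp(a)$ does. Second, since $\sto\subseteq\to,$ Corollary \ref{corollary_connecting} applies to any $\sto$-relation.

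For part (1), I would combine these: Corollary \ref{corollary_connecting}(1) gives $\supp(a)\to\supp(b),$ and the first observation ensures $\supp(b)$ consists of proper vertices. Together with the hypothesis that $\supp(a)$ is a set of regular vertices, this is exactly the definition of $\supp(a)\sto\supp(b).$ For part (2), I would first apply Corollary \ref{corollary_connecting}(2) to obtain some $c$ with $a\to c$ and $\supp(c)\subseteq\supp(b);$ the task then reduces to showing that this construction can be carried out using (A1) only. Inspecting the proofs of Corollary \ref{corollary_connecting}(2) and of direction $\Leftarrow$ of Proposition \ref{connecting}, the reduction proceeds by following the paths $p_{i'j'}$ supplied by Definition \ref{definition_connecting_support}, applying an axiom at each vertex along them. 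Only (A1) is needed at the sources $g_{i'}$ (regular by hypothesis) and at the interior vertices; no axiom is applied at the endpoints $h_{j'}.$

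The crux is therefore to verify that every interior vertex of every $p_{i'j'}$ is regular. Fix a proper prefix $p$ of some $p_{i'j'}$ and set $v=\ra(p);$ the next edge of $p_{i'j'}$ after $p$ witnesses that $P_p$ is nonempty. If $v$ were an infinite emitter, condition (ii) of Proposition \ref{connecting} (inherited in Definition \ref{definition_connecting_support}) would force $\supp(b)$ to contain some $q^v_Z,$ contradicting the hypothesis that $\supp(b)$ consists of proper vertices; and $v$ cannot be a sink since $p_{i'j'}$ continues past $v.$ Hence $v$ is regular, so only (A1) is used in the construction and $a\sto c.$ I expect this regularity check to be the main subtlety, since it captures precisely the interplay between condition (ii) of Proposition \ref{connecting} and the absence of improper vertices in $\supp(b).$
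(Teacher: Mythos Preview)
Your proposal is correct. Part (1) matches the paper's proof. For part (2), both you and the paper invoke Corollary \ref{corollary_connecting}(2) to produce $c$ with $a\to c$ and $\supp(c)\subseteq\supp(b)$, but the justifications that this $\to$ is in fact $\sto$ differ. You dig back into the construction from Proposition \ref{connecting} and use condition (ii) of Definition \ref{definition_connecting_support} to show that every interior vertex along each path $p_{i'j'}$ is regular, so only (A1) is ever applied. The paper instead argues abstractly: once an improper vertex is created by (A2) or (A3), no subsequent axiom application can remove it (applying (A3) only replaces $q_Z$ by some $q_W$, and applying an axiom to a different monomial leaves it untouched); since neither $\supp(a)$ nor $\supp(c)$ contains an improper vertex, no (A2) or (A3) step can have occurred in \emph{any} sequence realizing $a\to c$. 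The paper's persistence argument is shorter and does not require re-examining the specific construction; your route, while more hands-on, makes explicit the role of condition (ii) and would be the natural approach if one needed finer control over the intermediate vertices rather than just the endpoints.
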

\begin{proof}
To show (1), assume that $a\sto b$ and that $\supp(a)$ consists of regular vertices. By Corollary \ref{corollary_connecting}, $\supp(a)\to \supp(b).$ Since only (A1) is used in $a\sto b$, $\supp(b)$ does not contain any improper vertices, so $\supp(a)\sto \supp(b)$ by definition of $\sto$ for sets of vertices.  

To show (2), let $\supp(a)\sto \supp(b).$ By Corollary \ref{corollary_connecting}, there is $c\in F_E^\Gamma-\{0\}$ such that $\supp(c)\subseteq \supp(b)$ and $a\to c.$ Since the support of $a$ consists of regular vertices and the support of $b,$ thus of $c$ as well, of proper vertices, only (A1) can be applied in a sequence for $a\to c$. Hence, $a\sto c.$ 
\end{proof}

\section{Characterization of comparable elements}\label{section_comparable}

\subsection{Cancellative property}\label{subsection_cancellative}
First, we show that the monoid $M_E^\Gamma$ is cancellative by a direct proof. This was shown in  \cite[Corollary 5.8]{Ara_et_al_Steinberg} using the graph covering. Note that $M_E^\Delta$ may not be cancellative for a group $\Delta\neq \Gamma.$ In particular, if $E$ is a graph with a cycle with an exit and $\Delta$ is trivial, then $M_E^\Delta$ is not cancellative by \cite[Lemma 5.5]{Ara_et_al_Steinberg}.  

\begin{proposition} The $\Gamma$-monoid $M_E^\Gamma$ is cancellative. 
\label{cancellative}
\end{proposition}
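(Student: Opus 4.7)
My approach uses the Confluence Lemma (Lemma \ref{confluence}(2)), the Refinement Lemma (Lemma \ref{confluence}(1)), and the Riesz refinement property of the free commutative $\Gamma$-monoid $F_E^\Gamma$, combined with induction on $|c|$, the total number of monomials in the normal representation of the cancelled element.

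Given $a+c\sim b+c$, I first apply the Confluence Lemma to produce $d\in F_E^\Gamma$ with $a+c\to d$ and $b+c\to d$. The Refinement Lemma, applied to each of these, yields decompositions $d=d_a+d_c=d_b+d_c'$ with $a\to d_a$, $c\to d_c$, $b\to d_b$, and $c\to d_c'$. Since $F_E^\Gamma$ is a free commutative $\Gamma$-monoid (and so a free commutative monoid on the set of symbols $\{x^n g\}$), the equation $d_a+d_c=d_b+d_c'$ admits a Riesz refinement: there exist $e_{11},e_{12},e_{21},e_{22}\in F_E^\Gamma$ with $d_a=e_{11}+e_{12}$, $d_c=e_{21}+e_{22}$, $d_b=e_{11}+e_{21}$, and $d_c'=e_{12}+e_{22}$. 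It then suffices to prove $e_{12}\sim e_{21}$, since this yields $a\sim d_a=e_{11}+e_{12}\sim e_{11}+e_{21}=d_b\sim b$.

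The remaining step is an induction on $|c|$. The base case $c=0$ is immediate. For the inductive step, apply the Refinement Lemma once more to $c\to d_c$ and $c\to d_c'$ to obtain $c=c_1+c_2=c_1'+c_2'$ with $c_1\to e_{21}$, $c_2\to e_{22}$, $c_1'\to e_{12}$, $c_2'\to e_{22}$. Applying Riesz to $c=c_1+c_2=c_1'+c_2'$ produces $f_{11},f_{12},f_{21},f_{22}$ with $c_1=f_{11}+f_{12}$, $c_2=f_{21}+f_{22}$, $c_1'=f_{11}+f_{21}$, $c_2'=f_{12}+f_{22}$. If $f_{11}=f_{12}=f_{21}=0$, then $c_1=c_1'=0$, forcing $e_{12}=e_{21}=0$ (the axioms produce no $0$ from a nonzero element, so $0\to d$ forces $d=0$), and $e_{12}\sim e_{21}$ holds trivially. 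Otherwise $|f_{22}|<|c|$; from $c_2\to e_{22}$ and $c_2'\to e_{22}$ one obtains $f_{21}+f_{22}\sim f_{12}+f_{22}$, and the induction hypothesis (applied with cancelled element $f_{22}$) gives $f_{21}\sim f_{12}$. Hence $c_1=f_{11}+f_{12}\sim f_{11}+f_{21}=c_1'$, and combining with the relations $c_1\sim e_{21}$ and $c_1'\sim e_{12}$ produces $e_{12}\sim e_{21}$, as required.

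\textbf{Main obstacle.} The main subtlety is arranging the tower of refinements so that the inductive parameter strictly decreases; this hinges on the additivity of $|\cdot|$ under $+$ in $F_E^\Gamma$, which ensures $|c|=\sum_{i,j}|f_{ij}|$ and hence $|f_{22}|<|c|$ whenever any of $f_{11},f_{12},f_{21}$ is nonzero. The apparently worrying case $f_{11}=f_{12}=f_{21}=0$ turns out to be the degenerate one that closes the induction without any auxiliary argument, so no case-splitting on whether $d_a\leq d_b$ (or similar) is needed.
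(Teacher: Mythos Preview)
Your proof is correct, but it takes a quite different route from the paper's. The paper argues by induction on the length $n$ of a chain $a+c\sim^n b+c$: for the base case $a+c\to_1 b+c$, one observes that applying an axiom to a monomial $x^m g$ strictly decreases the multiplicity of $x^m g$ by one (since $x^m g$ never occurs in $x^m\ra(g)$ under any of (A1)--(A3)); since $c$ contributes the same multiplicities on both sides, the monomial $x^m g$ must already occur in $a$, and one concludes $a\to_1 b$ directly. No Confluence, Refinement, or Riesz decomposition is needed.

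Your argument, by contrast, passes through the Confluence Lemma, two layers of the Refinement Lemma, and two Riesz refinements in the free monoid, inducting on $|c|$ rather than on the chain length. This is a legitimate and self-contained structural proof, and it has the merit of being portable: it would work for any confluent, refining rewriting system on a Riesz monoid. The cost is that it imports the Confluence Lemma, which in this paper requires a direct-limit reduction to finite graphs, whereas the paper's counting argument is entirely elementary and independent of Confluence.
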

\begin{proof}
Assume that $a+c\sim b+d$ holds in $F_E^\Gamma$ for some $d\in F^\Gamma_E$ such that $c\sim d.$ So,  
we have that $a+c\sim b+d\sim b+c.$ We show that $a\sim b$ using induction on $n$ for $a+c\sim^n b+c.$
If $n=1,$ then either $a+c\to_1 b+c$ or $b+c\to_1 a+c.$ In the first case, there is a generator $g$ in the support of $a$ or $c$ such that
$b+c$ is obtained by replacing a summand $x^mg$ of $a+c$ by $x^m\ra(g)$ and keeping the rest of the monomials intact. By the nature of the three axioms, the number of monomials of the form $x^m g$ in $a+c$ is larger than in $b+c$ and each of the monomials in $x^m\ra(g)$ appears one time less in $a+c$ than in $b+c.$ Since these terms appear equal number of times in $c,$ this means that $a$ contains a monomial $x^mg$ and that $x^m\ra(g)$ is a summand of $b.$ Hence, $a=a'+x^mg$ and $b=a'+x^m\ra(g)$ for some $a'\in F_E^\Gamma$ so that $a\to_1 b.$ The case $b+c\to_1 a+c$ is similar and the induction step is analogous.
\end{proof}

\begin{remark}
Proposition \ref{cancellative} highlights an important difference between $M_E$ and $M_E^\Gamma$: while $M_E$ can be much larger than the positive cone of $G_E,$ the monoid $M_E^\Gamma$ is {\em equal to} the positive cone of $G_E^\Gamma.$ Thus,
the monoid $M_E$ can carry some information which is lost under formation of its Grothendieck group but $M_E^\Gamma$ carries {\em no additional information} than $G_E^\Gamma.$ In other words, using the language of \cite{Talented_monoid}, 
the group $G_E^\Gamma$ is equally ``talented'' as the monoid $M_E^\Gamma.$ 
\end{remark}

\subsection{The order}\label{subsection_order}
The relation $\sim$ on the monoid $F_E^\Gamma$ enables one to define a relation $\precsim$ as follows. 
\[a\precsim b\mbox{ if there is $c\in F^\Gamma_E$ such that }a+c\sim b\]
for all $a,b\in F^\Gamma_E.$ 
If $a\precsim b$ and $a\nsim b$, we write $a\prec b.$ Using Proposition \ref{cancellative}, it is direct to show that $a\prec b$ is equivalent with $a+c\sim b$ for some nonzero $c$ in $F_E^\Gamma.$ 

The relation $\precsim$ defines an order on $M_E^\Gamma$ given by   
\[[a]\leq [b]\;\;\mbox{ if and only if }\;\;a\precsim b.\]
It is direct to show that $\leq$ is reflexive and transitive. The antisymmetry holds by Proposition \ref{cancellative}. The relation $\leq$
induces an order on the Grothendieck group $G^\Gamma_E.$   

In \cite[Lemma 4.1]{Talented_monoid}, it is shown that $a\prec x^n a$ is not possible for any $a$ and any positive $n$ if $E$ is row-finite. After the lemma below, we show that this statement holds for an arbitrary graph in Proposition \ref{generalization_of_lemma_4_1}.  

\begin{lemma}
If $a\in F_E^\Gamma-\{0\}$ is such that $a\precsim x^na$ for some positive integer $n,$ then the following hold.
\begin{enumerate}[\upshape(1)]
\item No vertex in the support of $a$ is a sink. 
 
\item No vertex in the support of $a$ is an improper vertex. 

\item All vertices in the support of $a$ are regular (so $a$ is regular).
\end{enumerate}
\label{lemma_three_claims}
\end{lemma}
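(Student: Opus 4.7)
The plan is to exploit two basic facts: first, that $a\precsim x^n a$ iterates to $a\precsim x^{kn}a$ for every $k\ge 1$ (since $[a]\le x^n[a]$ in $M_E^\Gamma$ gives $x^{jn}[a]\le x^{(j+1)n}[a]$, and transitivity chains these), and second, that every application of $\to_1$ produces only monomials of degree at least that of the reduced monomial, so $\to$ is degree non-decreasing. Combined with the Confluence Lemma, which for each $k$ yields $d_k$ with $a+c_k\to d_k$ and $x^{kn}a\to d_k$, these observations will force the minimum degree $m^\ast$ of the offending generator in $a$ to satisfy $m^\ast\ge kn+m_0(a)$ for all $k$, where $m_0(a)$ is the minimum degree of a monomial of $a$; letting $k\to\infty$ then contradicts $m^\ast$ being a fixed integer.

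For part~(1), suppose $v\in\supp(a)$ is a sink and let $m^\ast$ be the minimum degree at which $v$ occurs in $a$. Because $v$ is a sink, none of (A1)--(A3) applies to $x^{m^\ast}v$, so the Refinement Lemma forces $x^{m^\ast}v$ to persist as a monomial of $d_k$. Applying Corollary~\ref{decomposing_the_initial_term_lemma} to $x^{kn}a\to d_k$ with target monomial $x^{m^\ast}v$ produces a monomial $x^{kn+m_i}g_i$ of $x^{kn}a$ with $kn+m_i\le m^\ast$, so $m_i\le m^\ast-kn$. Since $m_i\ge m_0(a)$, we conclude $m^\ast\ge kn+m_0(a)$, and iterating over $k$ yields the contradiction.

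For part~(2), the obstacle is that an improper vertex $q^v_Z$ is not inert under $\to_1$: axiom~(A3) can convert $x^m q^v_Z$ to $x^m q^v_W+\sum_{e\in W-Z}x^{m+1}\ra(e)$ for any $W\supsetneq Z$. This is the main obstacle of the proof. I would circumvent it by choosing $Z$ maximal (for inclusion) such that $q^v_Z\in\supp(a)$, which is possible since $\supp(a)$ is finite, and by working with the upward-closure projection
\[
\tau_Z(b)=\sum_{W\supseteq Z}\sigma_{q^v_W}(b)\in\Zset^+[\Gamma],
\]
where $\sigma_{q^v_W}(b)$ denotes the $q^v_W$-coefficient of $b$ in its normal representation. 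A short case analysis of (A1)--(A3) shows that losses and gains in $\sigma_{q^v_W}$-terms cancel whenever $W\supseteq Z$, while losses are absent when $W\not\supseteq Z$; hence $\tau_Z$ is non-decreasing under $\to$ and any positive increment sits at the degree of the reduced monomial. Maximality of $Z$ gives $\tau_Z(a)=\sigma_{q^v_Z}(a)$, whose lowest-degree term is $x^{m^\ast}$ for $m^\ast$ the minimum degree of $q^v_Z$ in $a$. The coefficient of $x^{m^\ast}$ in $\tau_Z(d_k)$ is at least $1$ when computed via $a+c_k\to d_k$, and equals $0$ (the coefficient of $x^{m^\ast-kn}$ in $\tau_Z(a)$) plus an increment supported in degrees $\ge kn+m_0(a)$ when computed via $x^{kn}a\to d_k$. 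Comparing forces $m^\ast\ge kn+m_0(a)$, and iterating over $k$ gives the desired contradiction.

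Part~(3) is a direct corollary of part~(2). If $v\in\supp(a)$ is an infinite emitter, apply (A2) once to a single occurrence $x^m v$ in $a$ to obtain $a'$ with $a\to_1 a'$; then $[a']=[a]\le x^n[a]=x^n[a']$, so $a'\precsim x^n a'$ in $M_E^\Gamma$. But $q^v_Z\in\supp(a')$ for the subset $Z\subseteq\so^{-1}(v)$ chosen in the (A2)-step, contradicting part~(2) applied to $a'$.
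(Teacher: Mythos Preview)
Your proof is correct and follows essentially the same approach as the paper: both iterate $a\precsim x^{kn}a$, invoke the Confluence Lemma to get a common target, and exploit that sinks are inert while improper-vertex monomials keep their degree under (A3). Your $\tau_Z$ invariant is a clean repackaging of what the paper does by directly tracking the family $\{q^v_W:W\supseteq Z\}$ and their degrees, and your reduction of (3) to (2) via a single (A2)-step is slightly slicker than the paper's direct argument for infinite emitters, but the underlying ideas coincide.
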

\begin{proof}
Since $a\precsim x^na,$ $a+b\sim x^na$ for some $b\in F_E^\Gamma.$ Then $a+b+x^nb\sim x^na+x^nb\sim x^{2n}a$ so, by induction, $a+\sum_{i=0}^kx^kb\sim x^{(k+1)n}a.$ Hence, we can find $n$ large enough so that $n$ is larger than the degrees of all monomials in a normal representation of $a.$
Assume that $n$ is such and that  $a+b\sim x^na$ for some $b\in F_E^\Gamma.$ By the Confluence Lemma \ref{confluence}(2), there is $c\in F^\Gamma_E$ such that $a+b \to c$ and $x^n a\to c.$ 

(1) Assume that a sink $v$ is in  $\supp(a)$ and let $\sum_{i=1}^l x^{m_i}v$ be the sum of all monomials in a normal representation of $a$ which contain $v.$ By construction, $m_i<n$ for every $i=1,\ldots, l.$ Since the relation $\to_1$ cannot be applied to $v,$ the relation $a+b\to c$ implies that $x^{m_i}v$ is a summand of $c$ for every $i=1,\ldots, l.$ On the other hand, the relation $x^n a\to c$ implies that every monomial of $c$ has degree larger than or equal to $n$ so $x^{m_1}v$ cannot be a summand of $c.$ This is a contradiction. 

(2) Assume that an improper vertex $q^v_Z$ is in $\supp(a)$ for some $v$ and some $Z.$ Let $\sum_{i=1}^l x^{m_i}q_{Z_i}$ be the sum of all monomials in a normal representation of $a$ which contain $q^v_{Z_i}$ for some nonempty and finite $Z_i\supseteq Z.$ Since an application of $\to_1$ does not change the power of a monomial with $q^v_W$ for some $W\supseteq Z,$ the relation $a+b\to c$ implies that $c$ contains a summand of the form $\sum_{i=1}^l x^{m_i}q_{W_i}$ for some $W_i\supseteq Z_i, i=1,\ldots, l.$ On the other hand, the relation $x^na\to c$ implies that every monomial of $c$ has degree larger than or equal to $n$ so $x^{m_1}q_{W_1}$ cannot be a summand of $c.$ This is a contradiction. 

(3) By part (1), to show that a vertex $v$ in the support of $a$ is regular, it is sufficient to show that $v$ is not an infinite emitter. Assume that an infinite emitter $v$ is in the support of $a$ and let $\sum_{i=1}^l x^{m_i}v$ be the sum of all monomials in a normal representation of $a$ which contain $v.$ Since axioms (A1) and (A3) are not applicable to any monomials with $v$ in them, the relation $a+b\to c$ implies that $\sum_{i=1}^l x^{m_i}g_i,$ where each $g_i$ is either $v$ or $q^v_Z$ for some $Z,$ is a summand in a normal representation of $c.$ On the other hand, the relation $x^n a\to c$ implies that  every monomial of $c$ has degree larger than or equal to $n$ so $x^{m_1}g_1$ cannot be a summand of $c$ which is a contradiction. 
\end{proof}

\begin{proposition}
The relation $a\prec x^na$ is not possible for any nonnegative $n$ and any $a\in F_E^\Gamma.$
\label{generalization_of_lemma_4_1}
\end{proposition}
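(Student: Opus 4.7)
The case $n=0$ is immediate, since $a \prec a$ contradicts the reflexivity of $\sim$. For $n \geq 1$, I would argue by contradiction. Suppose there is a nonzero $b \in F_E^\Gamma$ with $a + b \sim x^n a$. Iterating this relation (shift by $x^n$ and add) yields $a + \sum_{i=0}^{k-1}x^{in}b \sim x^{kn}a$ for every $k \geq 1$, so one may replace $n$ by any multiple and effectively enlarge it. By the Confluence Lemma pick $c \in F_E^\Gamma$ with $a + b \to c$ and $x^n a \to c$. Since $\to$ never decreases the degree of a monomial (visible from the three axioms), every monomial of $c$ has degree at least $n + M_a$, where $M_a$ is the minimum degree appearing in a normal representation of $a$. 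By iterated use of the Refinement Lemma \ref{confluence}(1), each monomial $x^{m_h}h$ of $b$ contributes a nonzero subelement $c_h \subseteq c$ with $x^{m_h}h \to c_h$, and every monomial of $c_h$ has degree at least $n + M_a$.

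The core of the argument is then a case analysis on $h$, closely paralleling Lemma \ref{lemma_three_claims}. If $h$ is a sink, then $x^{m_h}h$ is a fixed point of $\to$, so $c_h = x^{m_h}h$ has degree $m_h$, contradicting $n + M_a > m_h$ for $n$ large. If $h$ is an improper vertex $q^v_Z$ or an infinite emitter $v$, then axioms (A2) and (A3) preserve a monomial of the form $x^{m_h}q^v_{Z'}$ at the original degree $m_h$ in every reduction of $x^{m_h}h$, giving the same contradiction. If $h$ is regular and can reach some non-regular vertex (sink or infinite emitter) at finite distance $\ell$, I would induct on $\ell$: by Proposition \ref{connecting}(2)(i), any non-trivial reduction at a regular vertex must spawn all its outgoing edges, so either $x^{m_h}h$ is left untouched (producing a monomial of degree $m_h$ in $c_h$) or, after reducing along the edge toward the non-regular vertex, one obtains a persistent monomial of degree at most $m_h + \ell$; both contradict the degree bound once $n$ is sufficiently large. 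Running the same analysis on monomials $x^{m_g}g$ of $a$ forces $\supp(a)$ to meet the same condition.

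Hence both $\supp(a)$ and $\supp(b)$ consist of regular vertices that reach only regular vertices, so $a$ and $b$ lie inside the subgraph $E'$ of $E$ induced by all vertices reachable from $\supp(a) \cup \supp(b)$. This $E'$ is row-finite and has no sinks. Since reductions of elements of $F^\Gamma_{E'}$ stay in $F^\Gamma_{E'}$ (only (A1) is ever applicable, and its outputs remain in $E'$), the relation $a + b \sim x^n a$ holds in $M^\Gamma_{E'}$; the already-established row-finite case \cite[Lemma 4.1]{Talented_monoid} rules out $a \prec x^n a$ in $M^\Gamma_{E'}$, forcing $b=0$, the desired contradiction. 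The main obstacle is precisely the ``regular-closed'' case, where the degree-bound argument alone does not suffice, since on a regular-closed subgraph one can always push monomials arbitrarily high along paths; the reduction to the row-finite subgraph is what resolves it, but one must verify carefully that reductions cannot escape $E'$ so that the restricted relation genuinely encodes the original $\sim$.
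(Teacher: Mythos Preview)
Your argument is correct but follows a different strategy from the paper's proof. The paper also begins (via Lemma \ref{lemma_three_claims}) by concluding that $\supp(a)$ consists of regular vertices, but then proceeds in a self-contained fashion: it levels the degrees of $a$ (repeatedly applying (A1) until all monomials have the same degree $m$, so $a \sim x^m b$ for a sum $b$ of regular vertices), passes via Confluence to an element $h$ with $b \to h$ and $b + c \to x^n h$, and then shows by induction on the length of $b \to h$ that in fact $h + c \to x^n h$. The contradiction comes from a pure monomial count: any application of an axiom weakly increases the number of monomials, so $h + c \to x^n h$ with $h$ and $x^n h$ having the same number of monomials forces $c = 0$.

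Your approach instead strengthens the regularity conclusion---showing that every vertex \emph{reachable} from $\supp(a) \cup \supp(b)$ is regular---and then passes to the hereditary row-finite subgraph $E'$ to invoke the known row-finite result. This is a clean reduction, and the ``persistent low-degree monomial'' argument you sketch (inducting on the distance to a non-regular vertex) is valid. One point deserves to be made explicit: to transfer $a + b \sim x^n a$ from $F_E^\Gamma$ to $F_{E'}^\Gamma$ you must invoke the Confluence Lemma (a confluent witness $c$ lies in $F_{E'}^\Gamma$ since \emph{forward} reductions stay in $E'$), not merely the forward-closure of $E'$ under $\to$, because the zig-zag defining $\sim$ can in principle pass through vertices outside $E'$; you flag this issue but do not quite name the fix. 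The paper's argument has the advantage of being entirely self-contained; yours isolates where the difficulty lies and dispatches it modularly, at the cost of depending on \cite[Lemma 4.1]{Talented_monoid}.
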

\begin{proof}
Since $0\prec 0$ is false, it is sufficient to consider $a\neq 0.$ Also, since $a\prec a$ is false, it is sufficient to consider positive $n.$
Assume that $a\prec x^na$ for some positive $n$ and some nonzero $a\in F_E^\Gamma.$ By Lemma \ref{lemma_three_claims}, all elements in the support of $a$ are regular and proper vertices. Let $m$ be the maximum of degrees of the monomials in a normal representation of $a.$ If a monomial $x^lv$ in a normal representation of $a$ is such that $l<m,$ apply (A1) to $x^lv$ to replace this monomial by $\sum_{e\in \so^{-1}(v)}x^{l+1}\ra(e).$ We obtain an element $a_1$ such that $a_1\sim a$ so the relation $a_1\prec x^na_1$ also holds and, as a consequence, all vertices in the support of $a_1$ are regular also. Keep repeating this process until all monomials of some $a_k$ have the same degree $m$ so that we can write $a_k=x^m b$ where $b$ is a sum of regular vertices. Since $x^mb\prec x^{n+m}b$ we have that $b\prec x^nb$ and so $b+c\sim x^nb$ for some nonzero $c\in F_E^\Gamma.$  By the Confluence Lemma \ref{confluence}(2), there is $d$ such that $b+c\to d$ and $x^nb\to d.$  
The relation $x^nb\to d$ implies that $x^{-n}d\prec d$ so  $d\prec x^{n}d$ and all vertices in the support of $d$ are regular by Lemma \ref{lemma_three_claims}. Using the same argument as when obtaining $x^mb$ from $a,$ we can show that there is an element $f$ such that $d\to f$ and such that $f$ is a sum of monomials of the same degree $m'.$ Hence, $b+c\to d\to f$ and $x^nb\to d\to f.$  
Since $\to$ either increases the degree of a monomial or leaves it the same, the relation $x^nb\to f$ implies that $m'\geq n>0.$ 

Let $h=x^{-n}f$ so that $h$ is a sum of monomials of the same nonnegative degree $m'-n$ and that $b+c\to x^nh$ and $b\to h.$ We use induction on the length of a sequence for $b\to h$ to show that $h+c\to x^nh.$

If $b=h,$ the claim holds. Assume that it holds for length smaller than $k$ and let $b=b_0\to_1 b_1\to_1\ldots\to_1 b_k=h.$ Since $b$ is regular, $b\to_1 b_1$ is an application of (A1). Hence, $b=b'+v$ and $b_1=b'+\sum_{e\in \so^{-1}(v)}x\ra(e)$ for some regular vertex $v.$ 
Since the degree of every monomial in $x^nh=f$ is strictly larger than zero, $v$ has to be changed in the process of obtaining $x^nh$ from $b+c=b'+v+c.$ Reorder the terms of the sequence for $b+c\to x^nh$ so that an application of (A1) to $v$ is the first step. Hence,  
\[b+c=b'+v+c\to b'+\sum_{e\in \so^{-1}(v)}x\ra(e)+c=b_1+c\to x^nh.\]
We can now apply the induction hypothesis to $b_1$ to obtain that $h+c\to x^nh.$

Lastly, we show that the relation $h+c\to x^nh$ leads to a contradiction. Indeed, since $h$ is a sum of monomials of the same nonnegative degree and $n$ is strictly larger than zero, we have that $h+c\neq x^nh$ so at least one of the three axioms is used. If normal representations of $h$ and $c$ have $n_h$ and $n_c$ monomials respectively, then the number of terms in the resulting $x^nh$ is larger than or equal to 
$n_h+n_c.$ But since $x^nh$ has the same number of monomials as $h,$ we necessarily have that $n_c=0$ which implies that $c=0.$ This is a contradiction since $c$ is chosen to be nonzero such that $b+c\sim x^nb.$ 
\end{proof}

\subsection{Comparable, periodic, aperiodic and incomparable elements}\label{subsection_comparable}

Proposition \ref{generalization_of_lemma_4_1} implies that there are just two possibilities for $a\in F_E^\Gamma:$ either $a\succsim x^na$ for some positive $n$ or $a$ and $x^na$ are not comparable for any positive $n.$ In the case when  $a\succsim x^na$ for some positive $n$ we have that either $a\sim x^na$ or $a\succ x^na.$ We introduce the following terminology.   

\begin{definition} Let $a\in F_E^\Gamma.$
\begin{enumerate}
\item If $a\succsim x^na$ for some positive integer $n,$ the element $a$ is {\em comparable}.

\begin{enumerate}
\item[(1i)] If $a\sim x^na$ for some positive integer $n,$ the element $a$ is {\em periodic}.
 
\item[(1ii)] If $a\succ x^na$ for some positive integer $n,$ the element $a$ is {\em aperiodic}.
\end{enumerate}

\item If $a$ and $x^na$ are not comparable for any positive integer $n,$ the element $a$ is {\em incomparable}.
\end{enumerate}

For $[a]\in M_E^\Gamma,$ we say that $[a]$ is comparable, periodic, aperiodic or incomparable if any $b$ such that $a\sim b$ is such.
\label{definition_of_comparable}
\end{definition}

Note that 0 is periodic by this definition. An element of $F_E^\Gamma$ clearly cannot be both comparable and incomparable. We also note that a comparable element of $F_E^\Gamma$ cannot be both periodic and aperiodic. Indeed, if $x^m a\sim a\succ x^n a$ for some positive integers $m$ and $n,$ let $n$ be the least positive integer such that $a\succ x^na$. Since $x^m a\succ x^n a$ implies $x^{m-n}a\succ a,$ $m-n$ is negative by Proposition \ref{generalization_of_lemma_4_1} so $n>m$. On the other hand, the relation $x^m a\sim a\succ x^na$ also implies that $a\sim x^{-m}a\succ x^{n-m}a$ so $n-m\geq n$ by the assumption that $n$ is the smallest possible such that $a\succ x^na.$ The relation $n-m\geq n$ implies that $m\leq 0$ which is in contradiction with the assumption that $m$ is positive.

\subsection{Stationary elements}\label{subsection_stationary}
Next, we prove a series of claims which bring us to Theorem \ref{comparable}. Lemma \ref{comparable_connects_to_stationary} leads us to the notion of a stationary element introduced in Definition \ref{definition_stationary}.

\begin{lemma} Let $a\in F_E^\Gamma-\{0\}$  be such that $a\sim x^na+b$  for some positive integer $n$ and some $b\in F_E^\Gamma.$ There are $c\in F_E^\Gamma-\{0\}$ and $d\in F_E^\Gamma$ such that $c\to x^nc+d,$ $a\to c$ and $b\to d.$
\label{comparable_connects_to_stationary}
\end{lemma}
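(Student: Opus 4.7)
The plan is to use the Confluence Lemma~\ref{confluence}(2) together with induction on the length of a common-descendant sequence. By Confluence, there is $e\in F_E^\Gamma-\{0\}$ with $a\to e$ and $x^na+b\to e$; let $l$ be the length of a sequence $x^na+b\to^l e$. The induction is on $l$.

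The base case $l=0$ yields $x^na+b=e$, so $a\to e=x^na+b$ holds directly, and we take $c=a$ and $d=b$; the three required relations follow.

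For $l\ge 1$, consider the first step of the sequence $x^na+b\to^l e$, which modifies some monomial $\gamma g$. If $\gamma g$ lies in $b$, then $b\to_1 b_1$ and $x^na+b_1\to^{l-1}e$; since $[b_1]=[b]$ in $M_E^\Gamma$, we have $a\sim x^na+b_1$, and the inductive hypothesis applied to $(a,b_1)$ yields $c,d'$ with $a\to c$, $c\to x^nc+d'$, and $b_1\to d'$. Taking $d=d'$, one gets $b\to_1 b_1\to d'=d$. If instead $\gamma g=x^{n+m}g$ lies in $x^na$ (with $x^mg$ the corresponding monomial of $a$), then by the $\Gamma$-equivariance of the axioms, the same axiom applied to $x^mg$ produces a step $a\to_1 a'$, with the result of the step being $x^na'+b$ and $x^na'+b\to^{l-1}e$. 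Since $[a']=[a]$, we have $a'\sim x^na'+b$, and after re-choosing the sequence $a\to e$ to begin with $a\to_1 a'$ (using commutativity of $\to_1$-steps applied to distinct monomials), we obtain $a'\to e$, reducing the common-descendant sequence length to $l-1$. The inductive hypothesis applied to $(a',b)$ then yields the required $c$ and $d$, and $a\to_1 a'\to c$ gives $a\to c$.

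The main obstacle lies in the reordering of the sequence $a\to e$ in the last case. Distinct-monomial commutation handles most situations immediately, but when the original first step of $a\to e$ and the step $a\to_1 a'$ both act on the same monomial $x^mg$, subtleties arise for infinite emitters, where axioms (A2) and (A3) permit different subset choices $Z\subseteq\so^{-1}(v)$ and hence yield genuinely different one-step results. Resolving this delicate case likely requires invoking Confluence locally to reconcile the two axiom choices, or replacing $e$ by a further common descendant and verifying that the induction still closes; alternatively, one may extract $c$ directly from the Refinement Lemma~\ref{confluence}(1) applied to $x^na+b\to e$, writing $e=e_1+d$ with $x^na\to e_1$ and $b\to d$, observing that every descendant of $x^na$ has monomials of degree at least $n$ so $e_1=x^nf$ with $a\to f$, and then arguing (with the same reordering care) that the resulting $f$ plays the role of $c$.
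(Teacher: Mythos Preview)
Your proposal has a genuine gap: the reordering obstacle you identify is real, and you do not resolve it. When the first step of $x^na+b\to^l e$ acts on $x^{n+m}g$, you need $a'\to e$ to apply the inductive hypothesis, but the sequence $a\to e$ may modify $x^mg$ via a different (A2)/(A3) choice, or may leave $x^mg$ untouched---and in the latter case you have no control over where $x^mg$ sits inside $e$. Your suggestions at the end point in the right direction but none is carried out.

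The paper's proof takes your final suggestion (Refinement) as its \emph{starting point}, with two additions you omit. First, it enlarges $n$ (via $a\sim x^{2n}a+x^nb+b\sim\ldots$) so that $n$ exceeds every monomial degree of $a$. Second, after using Refinement to write the confluence target as $f=f_1+f_2$ with $x^na\to f_1$ and $b\to f_2$, and setting $c=x^{-n}f_1$ (so $a\to c$ and $a\to f=x^nc+f_2$), it inducts on the length $k$ of a sequence $a\to^k c$ rather than on $l$. If $a\to_1 a_1$ is the first step of that sequence, replacing a monomial $x^mg$, there are two cases for the other sequence $a\to f$. If the same step is applied to $x^mg$ somewhere in $a\to f$, reorder it to the front and apply the hypothesis to $a_1$. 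Otherwise $x^mg$ survives into $f$; since every monomial of $x^nc=f_1$ has degree at least $n>m$, the term $x^mg$ must lie in $f_2$, say $f_2=d'+x^mg$, and one replaces $x^mg$ by $x^m\ra(g)$ on both sides of $a=a'+x^mg\to x^nc+d'+x^mg$ to get $a_1\to x^nc+d'+x^m\ra(g)$, with $b\to f_2\to d'+x^m\ra(g)$, closing the induction. The enlargement of $n$ is precisely what makes this ``untouched monomial'' case tractable and is the missing idea in your outline.
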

Note that the assumption of the lemma is exactly that $a$ is comparable, the case $b=0$ corresponds exactly to the case that $a$ is periodic, and the case $b\neq 0$ to the case that $a$ is aperiodic.
\begin{proof}
Since $a\sim x^na+b\sim x^{2n}a+x^nb+b\sim \ldots,$ we can choose $n$ as large as needed. Let us choose $n$ larger than the degree of every monomial in a normal representation of $a.$ 

By the Confluence Lemma \ref{confluence}(2), $a\to f$ and $x^na+b\to f$ and by the Refinement Lemma \ref{confluence}(1), $f=f_1+f_2$ such that $x^na\to f_1$ and $b\to f_2.$ Let $c=x^{-n}f_1$ so that $a\to x^{-n}f_1=c$ and that $a\to f= x^nc+f_2.$ 

We use induction on $k$ for $a\to^k c.$ If $k=0,$ then $a=c.$ Let $d=f_2$ so that $b\to d.$ Assuming the induction hypothesis, let us consider $a\to^k c$ with $a=a_0\to_1 a_1\to_1\ldots \to_1 a_k=c.$ Let $a=a'+x^mg$ for some generator $g$ such that $a_1=a'+x^m\ra(g).$ Consider the following two cases for the relation $a\to x^nc+f_2.$
\begin{enumerate}[1.]
\item There is an application of the same axiom used for $a\to_1 a_1$ to $x^mg$ at some point such that $x^mg$ is not changed prior to this point. Changing the order of applications of axioms in the sequence for $a\to x^nc+f_2,$ we can assume that this application of the axiom happened first. In this case $a\to a_1\to x^nc+f_2.$ Thus, we can apply the induction hypothesis to $a_1$ instead of $a$ and obtain the relation $c\to x^nc+d$ for some $d$ such that $f_2\to d.$ Hence, $b\to f_2\to d.$ 

\item There is no application of the axiom used for $a\to_1 a_1$ to $x^mg$ at any point. Since $n$ is larger than $m,$ then $x^mg$ has to be a summand of $f_2.$ Say $f_2=d'+x^mg.$ Then $a=a'+x^mg\to x^nc+d'+x^mg.$ 
Replacing the terms $x^mg$ by $x^{m}\ra(g)$ on both sides of the relation $\to,$ we obtain that 
$a_1=a'+x^{m}\ra(g)\to x^nc+d'+x^{m}\ra(g).$
Since we have $a_1\to^{k-1} c,$ we can apply the induction hypothesis to $a_1$ and obtain that $c\to x^nc+d$ for some $d$ such that $d'+x^{m}\ra(g)\to d.$ Hence, $b\to f_2=d'+x^mg\to d'+x^{m}\ra(g)\to d.$
\end{enumerate}
\end{proof}

The properties of an element such as element $c$ of Lemma \ref{comparable_connects_to_stationary} are significant in the characterization of a comparable element so we assign a name to such an element. 

\begin{definition}
An element $a\in F_E^\Gamma-\{0\}$ is a {\em stationary} element if $a\to x^na+b$ for some positive integer $n$ and some $b\in F_E^\Gamma.$ 
\label{definition_stationary} 
\end{definition}

\begin{example}
\begin{enumerate}
\item 
If $E$ is the graph  $\xymatrix{{\bullet}^v \ar[r] & {\bullet}^w\ar@(ru,rd) }\;\;\;\;,$ then $w$ is stationary since $w\to xw$. One can directly check that if $v\to a$ for some $a\in F_E^\Gamma,$ then either $a=v$ or $a$ is of the form $x^nw$ for some positive integer $n.$ Hence, $v$ is not stationary.   
 
\item Let $E$ be the Toeplitz graph $\;\;\;\;\xymatrix{{\bullet}^v\ar@(lu,ld)  \ar[r] & {\bullet}^w}$ 
and $a=v+w\in F_E^\Gamma.$ Since $a=v+w\to xv+xw+w=x(v+w)+w=xa+w,$ $a$ is stationary. Note that $b=v+xw$ has the same support as $a$ but $b$ is not stationary. Indeed, if $b\to c,$ then $c=b$ or $c=x^nv+x^nw+x^{n-1}w+\ldots+xw+w+xw$ for some positive $n.$ So, assuming that $b\to x^nb+d$ for some $d$ and positive $n$ leads to a contradiction. 

We note also that adding $xv$ to $b,$ we obtain a stationary element again since it is a sum of stationary elements $x(v+w)$ and $v.$
\end{enumerate}
\label{example_stationary}
\end{example}

The next lemma describes the support of a stationary element. Recall that a generator $g$ is on a cycle if $g\cto^p g$ for some $p$ with $|p|>0.$ 

\begin{lemma} Let $a\in F_E^\Gamma$ be stationary such that $a\to x^na+b$ for some positive integer $n$ and some $b\in F_E^\Gamma$. 
\begin{enumerate}[\upshape(1)]
\item For any positive integer $k,$ \[a\to x^{kn}a+\sum_{i=0}^{k-1}x^{in}b.\]

\item The support of $a$ contains an element which is on a cycle.

\item Each element of the support of $a$ which is not on a cycle is on a path exiting a cycle which contains another element of $\supp(a).$ \footnote{This condition can be described also in terms of the {\em tree} $T(g)=\{h\mid g\cto h\}$ of a generator $g$ as follows: $\supp(a)\subseteq \bigcup \{T(g)\mid  g\in \supp(a)$ and $g$ on a cycle$\}.$}

\item Each element of the support of $a$ is either on a cycle or on a path exiting a cycle which contains another element of $\supp(a).$ 
\end{enumerate} 
\label{support_of_stationary} 
\end{lemma}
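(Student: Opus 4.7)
For part (1), I would proceed by a straightforward induction on $k$. The base case $k=1$ is exactly the hypothesis. For the inductive step, applying the group action $x^{kn}$ to the base relation gives $x^{kn}a \to x^{(k+1)n}a + x^{kn}b$ (using that each axiom (A1)--(A3), and hence $\to$, commutes with the $\Gamma$-action); since $\to$ is preserved under addition of a fixed element (the axioms involve replacing a single monomial while leaving the rest intact), combining this with the inductive hypothesis $a \to x^{kn}a + \sum_{i=0}^{k-1} x^{in} b$ yields $a \to x^{(k+1)n}a + \sum_{i=0}^{k} x^{in} b$.

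For parts (2) and (3), the plan is to extract a self-map on the monomials of $a$ via Proposition \ref{connecting}, then apply the pigeonhole principle to its orbits. Write $a=\sum_{i=1}^k x^{m_i} g_i$ in normal form. Since the monomials of $x^n a$ are exactly $x^{n+m_i}g_i$ and these appear as monomials of $x^n a + b$, the partition provided by Proposition \ref{connecting}(2) applied to $a \to x^n a + b$ defines a function $\phi:\{1,\ldots,k\}\to\{1,\ldots,k\}$ together with, for each $i$, a path $p_i$ such that $g_{\phi(i)} \cto^{p_i} g_i$ of length $n + m_i - m_{\phi(i)}$.

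Because $\phi$ is a self-map on a finite set, every forward $\phi$-orbit must enter a $\phi$-cycle. For (2), pick any $i_0$ with $\phi^r(i_0)=i_0$ for some $r\geq 1$, and concatenate the paths $p_{\phi^{r-1}(i_0)}, p_{\phi^{r-2}(i_0)}, \ldots, p_{i_0}$, traversing $g_{i_0} \cto g_{\phi^{r-1}(i_0)} \cto \cdots \cto g_{\phi(i_0)} \cto g_{i_0}$. Their lengths telescope to $rn > 0$, so $g_{i_0} \in \supp(a)$ is on a cycle. For (3), given $g_j \in \supp(a)$ not on a cycle, iterate $\phi$ from $j$. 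The index $j$ cannot itself lie on a $\phi$-cycle, since the construction of (2) would then place $g_j$ on a cycle of $E$. Hence the orbit leaves $j$ and enters a $\phi$-cycle at some $i_0 = \phi^s(j)$ with $s \geq 1$. Concatenating $p_{\phi^{s-1}(j)}, p_{\phi^{s-2}(j)}, \ldots, p_j$ gives $g_{i_0} \cto g_j$; because $g_{i_0}$ is on a cycle (by the argument for (2)) while $g_j$ is not, we have $g_{i_0} \neq g_j$, so $g_j$ lies on an exit from the cycle through $g_{i_0}$, and $g_{i_0}$ is an element of $\supp(a)$ other than $g_j$ on this cycle. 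Part (4) then follows immediately: any $g \in \supp(a)$ is either on a cycle, or by (3) lies on an exit from a cycle containing another element of $\supp(a)$.

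The main technical care I foresee is the bookkeeping in the path concatenation: since $p_i$ runs \emph{from} $g_{\phi(i)}$ \emph{to} $g_i$, a $\phi$-cycle of length $r$ is traversed in the reverse order of $\phi$-iteration, and the lengths must telescope correctly (to $rn$ in (2) and to $sn + m_j - m_{i_0}$ in (3)). A minor but essential point in (3) is verifying $s \geq 1$, which uses the hypothesis that $g_j$ is not on a cycle in $E$ together with the construction from (2) applied contrapositively.
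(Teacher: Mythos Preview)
Your argument is correct. For part~(1) it coincides with the paper's. For parts~(2)--(4) it takes a genuinely different route.

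The paper argues (2) and (3) by first invoking part~(1) to replace $n$ by a multiple large enough that $n$ exceeds every difference $m_i-m_j$ of degrees, and then running a sequential peeling argument: starting from a monomial $x^{m_1}g_1$ with $g_1$ not on a cycle, it uses the Refinement Lemma (Lemma~\ref{confluence}(1)) and Corollary~\ref{decomposing_the_initial_term_lemma} to locate a monomial $x^{m_2}g_2$ of the remaining part with a positive-length path $g_2\cto g_1$, then repeats. The process terminates by finiteness of the number of monomials, producing some $g_k$ on a cycle.

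Your approach bypasses both the enlargement of $n$ and the step-by-step Refinement/Corollary~\ref{decomposing_the_initial_term_lemma} argument: you apply Proposition~\ref{connecting} once to $a\to x^na+b$ to obtain the full partition, read off the self-map $\phi$ on $\{1,\dots,k\}$, and use that every orbit of a self-map on a finite set enters a cycle. The telescoping of lengths to $rn$ along a $\phi$-cycle makes the positivity automatic, so no constraint on $n$ is needed. This is cleaner, and in fact it anticipates exactly the stationary-partition structure the paper introduces later in Definition~\ref{core_partition} and Proposition~\ref{key_lemma}; you are essentially observing that the data $(I_i,p_{ij})$ already encode a functional digraph on the monomials.

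One small point worth making explicit: the concatenation step relies on $\cto$ being transitive with additive path lengths, i.e.\ $g\cto^p h$ and $h\cto^q k$ imply $g\cto^{pq}k$. The paper only asserts transitivity of $\cto$; when an intermediate generator is an improper vertex $q_Z^v$ one should check that the first-edge condition for the next path (first edge $\notin Z$) is compatible with the concatenation, which it is. This is routine but should be noted.
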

\begin{proof}
To show (1), note that if $a\to x^na+b,$ then
\[a\to x^na+b \to  x^{2n}a+x^nb+b\to x^{3n}a+x^{2n}b+x^nb+b\to \ldots \to x^{kn}a+\sum_{i=0}^{k-1} x^{in}b.\]

To show (2), we use part (1) to choose $n$ larger than $k-m$ for any degrees $k$ and $m$ of any monomials in a normal representation of $a.$ Let $l$ be the number of monomials in a normal representation of $a.$ 

If all generators in $\supp(a)$ are on cycles, there is nothing to prove. So, suppose that there is $g_1\in \supp(a)$ such that $x^{m_1}g_1$ is a monomial of $a$ and that $g_1$ is not on a cycle. 
Let $a=a_1+x^{m_1}g_1.$ By the Refinement Lemma \ref{confluence}(1), there are $c_{11}, c_{12}$ such that
\begin{center}
$a_1+x^{m_1}g_1\to x^na_1+x^{n+m_1}g_1+b=c_{11}+c_{12},\;\;$ $a_1\to c_{11}$ and $x^{m_1}g_1\to c_{12}.$ 
\end{center} 
The monomial $x^{n+m_1}g_1$ is a summand of either $c_{11}$ or $c_{12}.$ In the second case, $x^{m_1}g_1\to x^{n+m_1}g_1+c$ for some $c$ and Corollary \ref{converse_connecting} implies that there is a path of length $n>0$ from $g_1$ to $g_1$ which means that $g_1$ is on a cycle. This is a contradiction with the choice of $g_1.$ Hence, $x^{n+m_1}g_1$ is a summand of $c_{11}.$ This implies that $c_{11}\neq 0$ and so $a_1\neq 0$ also which means that $l>1$ and $a_1$ has $l-1$ terms. 

By Corollary \ref{decomposing_the_initial_term_lemma}, there is a monomial $x^{m_2}g_2$ of $a_1$ such that $a_1=a_2+x^{m_2}g_2$ (so $a_2$ has $l-2\geq 0$ terms) and that $x^{m_2}g_2\to x^{n+m_1}g_1+c$ for some $c.$ The choice of $n$ guarantees that $n+m_1-m_2>0$ so that there is a path of positive length from $g_2$ to $g_1$ by Corollary \ref{converse_connecting}. 
If $g_2$ is on a cycle, we are done. If not, consider whether the term $x^{n+m_2}g_2$ is a summand of $c_{11}$ or $c_{12}.$ If it is a summand of $c_{12},$ then $x^{m_1}g_1\to x^{n+m_2}g_2+d$ for some $d$ and so there is a path of positive length from $g_1$ to $g_2.$ As there is a path of positive length from $g_2$ to $g_1,$ $g_1$ is on a cycle. Since this is not the case,  $x^{n+m_2}g_2$ is a summand of $c_{11}.$ 

Apply the Refinement Lemma \ref{confluence}(1) again to decompose $c_{11}$ as $c_{21}+c_{22}$ such that $a_2\to c_{21}$ and $x^{m_2}g_2\to c_{22}.$ Since $g_2$ is not on a cycle, $x^{n+m_2}g_2$ is a summand of $c_{21}$ which implies that $c_{21}\neq 0$ and so $a_2\neq 0$ which means that $l-2>0.$  
By Corollary \ref{decomposing_the_initial_term_lemma}, there is a summand $x^{m_3}g_3$ of $a_2$ such that $a_2=a_3+x^{m_3}g_3$ (so that $a_3$ has $l-3\geq 0$ terms) and that $x^{m_3}g_3\to x^{n+m_2}g_2+d$ for some $d.$ The choice of $n$ guarantees that $n+m_2-m_3$ is positive so we can conclude that there is a path of positive length from $g_3$ to $g_2$ by Corollary \ref{converse_connecting}. 

If $g_3$ is on a cycle, we are done. If not, the term $x^{n+m_3}g_3$ must be a summand of $c_{21}$ as otherwise $g_3$ is on a cycle which is not the case. So, $x^{n+m_3}g_3$ is a summand of $c_{21},$ $a_3+x^{m_3}g_3\to c_{21},$ and we can continue the decomposition process $c_{21}=c_{31}+c_{32}$ as in the previous step. 

Since $l$ is finite, this process eventually stops. If it stops at the $k$-th step, $g_k$ is on a cycle and (2) holds. 

Note that the proof of part (2) implies that if $g_1$ is not on a cycle, then $g_1$ is on a path leaving a cycle which contains $g_k.$ This is because the proof shows that there is a path from $g_{i+1}$ to $g_i$ for all $i=1,\ldots, k-1.$ Hence, this automatically shows part (3). Part (4) is a direct corollary of part (3).
\end{proof}

The last part of Lemma \ref{support_of_stationary} describes the support of a stationary element. The properties of such set are relevant and we introduce some terminology for it. First, we say that a finite and nonempty set of generators of $F_E^\Gamma$ is {\em stationary} if every $g\in V$ is either on a cycle or on a path exiting a cycle which contains some generator $h\in V.$ By Lemma \ref{support_of_stationary}, the support of every stationary element is a stationary set. 

For a stationary set $V,$ let $V_c$ denote the set of those $g\in V$ which are on cycles (thus $V_c\neq \emptyset$). We say that $V_c$ is the {\em core} of $V$ and that $g\in V_c$ is a {\em core  generator}. We say that the cycles which contain core generators are the {\em core cycles} of $V$. Let $V_e$ denote $V-V_c$ (so $V_e$ is possibly empty). We call this set the {\em exit set} of $V$ and we say that $g\in V_e$ is an {\em exit generator.}

For a core generator $g\in V_c$, let $n_g$ be the minimum of the set of lengths of cycles on which $g$ is. Let $n$ be the least common multiple of $n_g$ for $g\in V_c.$ We show that $n$ has a special significance for a stationary set $V$ which consists of core generators only so we call it {\em the core period} of such $V.$  

If $a$ is stationary, let $a=a_c+a_e$ such that the support of $a_c$ is $\supp(a)_c$ and the support of $a_e$ is $\supp(a)_e$ (thus $a_c\neq 0$ and $a_e$ is possibly zero). We call $a_c$ and $a_e$ {\em the core part} and {\em the exit part} of $a$ respectively. 

The next example illustrates these newly introduced concepts. 

\begin{example}
\begin{enumerate}
\item If $E$ is the graph  $\xymatrix{{\bullet}^v \ar[r] & {\bullet}^w\ar@(ru,rd) }\;\;\;\;,$ then the stationary element $w$ has the core part $w$ and the exit part 0. The set $\{w\}$ is a stationary set with the core equal to the entire set $\{w\}.$ The core period of the core $\{w\}$ is 1. 
   
\item If $E$ is the Toeplitz graph $\;\;\;\;\xymatrix{{\bullet}^v\ar@(lu,ld)  \ar[r] & {\bullet}^w},$ then the stationary element $a=v+w$ has the core part $v$ and the exit part $w.$ The stationary set $\{v,w\}$ has the core $\{v\}$ and the exit set $\{w\}.$ The core period of the core $\{v\}$ is 1. 
\end{enumerate}
\label{example_core}
\end{example}

If $a\in F_E^\Gamma$ is such that each $g\in \supp(a)$ is on a cycle, then $\supp(a)$ is a stationary set by definition and $a=a_c.$ In the next lemma, we show that such element $a$ is necessarily stationary.  

\begin{lemma}
Let $a\in F_E^\Gamma-\{0\}$ be such that each $g\in \supp(a)$ is on a cycle, and let $n$ be the core period of $\supp(a).$ The following hold. 
\begin{enumerate}[\upshape(1)]
\item The element $a$ is  stationary and $a\to x^na+b$ for some $b\in F_E^\Gamma$. 

\item The element $a$ is periodic if and only if the core cycles have no exits.  
\end{enumerate}
\label{stationary_support_cyclic}
\end{lemma}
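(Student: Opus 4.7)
For part (1), I write $a=\sum_{j=1}^k x^{m_j}g_j$ in normal form and, for each $j$, fix a cycle of length $n_{g_j}$ through $g_j$ (which exists by hypothesis). Following such a cycle, Definition \ref{definition_connecting} yields a relation $g_j\to x^{n_{g_j}}g_j+a_{g_j}$ for some $a_{g_j}\in F_E^\Gamma$. Iterating $n/n_{g_j}$ times, as in Lemma \ref{support_of_stationary}(1), produces $g_j\to x^n g_j+b_{g_j}$ with $b_{g_j}=\sum_{i=0}^{n/n_{g_j}-1}x^{in_{g_j}}a_{g_j}$. Since each axiom is applied to a single monomial inside a larger sum, adding these relations across $j$ gives $a\to x^n a+b$ with $b=\sum_{j=1}^k x^{m_j}b_{g_j}$, so $a$ is stationary.

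For part (2), the key link between the graph condition and the algebraic one is: the minimal cycle of length $n_{g_j}$ through $g_j$ has no exit if and only if $a_{g_j}=0$. Indeed, if every vertex on that cycle emits only the single cycle edge then applying (A1) successively around the cycle yields $g_j\to x^{n_{g_j}}g_j$ with no extra monomial, whereas the presence of an exit edge at some vertex forces the corresponding application of (A1) to contribute at least one additional monomial to $a_{g_j}$. I will also note that if $g_j=q^v_Z$ is an improper vertex on a cycle, then $v$ is an infinite emitter, so $v$ emits at least one edge outside any cycle through it, and every such cycle has exits; hence the no-exit hypothesis forces all core generators to be proper vertices. Finally, absence of exits on the minimal cycle $c_{g_j}$ forces every cycle through $g_j$ to retrace $c_{g_j}$ (no branching is possible), so the clause ``core cycles have no exits'' is equivalent to each minimal cycle $c_{g_j}$ having no exits.

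The $(\Leftarrow)$ direction of (2) then follows immediately: if every core cycle has no exits, each $a_{g_j}=0$, so $b=0$, giving $a\to x^n a$, hence $a\sim x^n a$, so $a$ is periodic. For the $(\Rightarrow)$ direction, assume $a\sim x^m a$ for some positive $m$. Iterating part (1) with $k=m$ gives $a\to x^{mn}a+\sum_{i=0}^{m-1}x^{in}b$, and combining this with $a\sim x^{mn}a$ and cancellativity of $M_E^\Gamma$ (Proposition \ref{cancellative}) yields $\sum_{i=0}^{m-1}x^{in}b\sim 0$. Because $\to_1$ is defined only on nonzero elements, no nonzero element of $F_E^\Gamma$ is $\sim$-equivalent to $0$, so $\sum_{i=0}^{m-1}x^{in}b=0$ in $F_E^\Gamma$; since $F_E^\Gamma$ is a free commutative $\Gamma$-monoid, this forces $b=0$, hence each $b_{g_j}=0$ and each $a_{g_j}=0$. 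By the equivalence above, each minimal cycle through a core generator is exit-free, so all core cycles have no exits.

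The principal delicate point will be the $(\Rightarrow)$ direction: extracting from the algebraic equivalence $a\sim x^m a$ the combinatorial conclusion that our specific decomposition $a\to x^n a+b$ must have $b=0$. This rests jointly on cancellativity of $M_E^\Gamma$ and on the fact that only $0$ is $\sim$-equivalent to $0$ in $F_E^\Gamma$; once $b=0$ is in hand, freeness of the $\Gamma$-monoid lets us descend to $a_{g_j}=0$ for every $j$, at which point the graph-theoretic conclusion is the observation made in paragraph two.
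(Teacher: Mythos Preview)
Your proof is correct and follows essentially the same line as the paper's argument: for part (1) you build $g_j\to x^{n_{g_j}}g_j+a_{g_j}$ by following a minimal cycle and iterate up to the core period, exactly as the paper does, and for the $(\Leftarrow)$ direction of (2) your argument is identical.

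The one place where you diverge slightly is the $(\Rightarrow)$ direction of (2). The paper argues (tersely) that $b\neq 0$ would give $a\succ x^n a$, making $a$ aperiodic, which is incompatible with periodicity by the dichotomy established right after Definition \ref{definition_of_comparable} (which in turn rests on Proposition \ref{generalization_of_lemma_4_1}). You instead iterate both the periodicity relation $a\sim x^m a$ and the stationary relation $a\to x^n a+b$ up to the common exponent $mn$, then cancel via Proposition \ref{cancellative} to force $\sum_{i=0}^{m-1}x^{in}b\sim 0$, hence $b=0$ by freeness. Both routes are sound; yours is a bit more self-contained since it avoids invoking the periodic/aperiodic dichotomy and uses cancellativity directly, while the paper's is shorter because that dichotomy has already been proved. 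Your side observations about improper generators forcing exits, and about exit-free minimal cycles being the \emph{only} cycles through their vertices, are correct and fill in details the paper leaves implicit.
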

\begin{proof}
If $g\in \supp(a),$ then $g\cto^{c_g} g$ where $c_g$ is a cycle of length $n_g,$ where $n_g$ is the minimum of the set of lengths of cycles which contain $g.$ Hence, 
$g\to x^{n_g}g+b_g'$ for some $b_g'\in F_E^\Gamma$ such that  $b_g'=0$ if and only if $c_g$ has no exits. Since $n$ is a multiple of $n_g,$ $g\to x^ng+b_g$ for some $b_g$ such that $b_g=0$ if and only if $b_g'=0.$

If $a=\sum_{j=1}^l x^{k_j}g_j$ is a normal representation of $a,$ then we have that $x^{k_j}g_j\to x^{n+k_j}g_j+x^{k_j}b_{g_j}.$ Adding these relations together produces 
\[a\to \sum_{j=1}^l x^{n+k_j}g_j+\sum_{j=1}^l x^{k_j} b_{g_j}=x^n\sum_{j=1}^l x^{k_j}g_j+\sum_{j=1}^l x^{k_j}b_{g_j}=x^na+b\]
for $b=\sum_{j=1}^l x^{k_j}b_{g_j}$ so (1) holds. To show (2), note that $a$ is periodic if and only if $b=0$ and $b=0$ if and only if any core cycle has no exits. 
\end{proof}

We note the following corollary of Lemmas \ref{comparable_connects_to_stationary}, \ref{support_of_stationary}, and \ref{stationary_support_cyclic}. 

\begin{corollary}
The following conditions are equivalent.  
\begin{enumerate}[\upshape(1)]
\item There is a comparable generator of $F_E^\Gamma.$ 
\item There is a nonzero comparable element of $F_E^\Gamma.$  
\item The graph $E$ has a cycle. 
\end{enumerate}
\label{exists_comparable}
\end{corollary}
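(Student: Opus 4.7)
The plan is to establish the cycle of implications $(1) \Rightarrow (2) \Rightarrow (3) \Rightarrow (1)$, using each of the three cited lemmas in turn. The implication $(1) \Rightarrow (2)$ is immediate since any generator of $F_E^\Gamma$ is a nonzero element.

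For $(2) \Rightarrow (3)$, suppose $a\in F_E^\Gamma-\{0\}$ is comparable, so $a\succsim x^na$ for some positive integer $n$. Writing $a+c\sim x^na$ for some $c$ and rearranging gives $a\sim x^na+b$ for some $b\in F_E^\Gamma$ (in fact, reading the definition of $\succsim$ directly yields $x^na+b\sim a$ for some $b$). Apply Lemma \ref{comparable_connects_to_stationary} to produce a nonzero stationary element $c$ with $a\to c$ and $c\to x^nc+d$. By Lemma \ref{support_of_stationary}(2), $\supp(c)$ contains a generator $g$ which lies on a cycle, i.e.\ $g\cto^p g$ for some path $p$ of positive length. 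Inspecting Definition \ref{definition_connecting} in each of the four cases for $g$ shows that $p$ is a genuine path of positive length in $E$ from some vertex to itself, and hence $E$ contains a closed path; any closed path in a directed graph contains a cycle, so $E$ has a cycle.

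For $(3) \Rightarrow (1)$, suppose $E$ has a cycle and let $v$ be any vertex on it. Then $v$, viewed as a generator of $F_E^\Gamma$, is on a cycle in the sense of Definition \ref{definition_connecting}. The singleton $\{v\}=\supp(v)$ therefore consists entirely of generators on cycles, so Lemma \ref{stationary_support_cyclic}(1) applies and yields a positive integer $n$ (a multiple of the length of the cycle) and some $b\in F_E^\Gamma$ with $v\to x^nv+b$. Consequently $v\sim x^nv+b$, which gives $x^nv\precsim v$ and shows that the generator $v$ is comparable.

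The main substantive step is $(2) \Rightarrow (3)$, since one must pass from an abstract comparability relation to a graph-theoretic cycle; the two lemmas invoked do almost all the work, but one small check required is that a generator $g$ (possibly an improper vertex $q_Z^v$) being on a cycle in the monoid sense actually forces a cycle in $E$. This is where the case analysis in Definition \ref{definition_connecting} is used: in every one of the four cases, the condition $g\cto^p g$ with $|p|>0$ produces a cycle in the underlying graph.
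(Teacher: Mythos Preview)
Your proof is correct and follows essentially the same route as the paper: both arguments use Lemma~\ref{comparable_connects_to_stationary} and Lemma~\ref{support_of_stationary}(2) for $(2)\Rightarrow(3)$, and Lemma~\ref{stationary_support_cyclic} for $(3)\Rightarrow(1)$. Your extra verification that a generator (possibly improper) being ``on a cycle'' forces an actual cycle in $E$ is already built into the paper's definition (see the remark after Definition~\ref{definition_connecting}), and the momentary slip writing $a+c\sim x^na$ is harmless since you immediately correct it in the parenthetical.
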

\begin{proof} 
The implication (1) $\Rightarrow$ (2) is direct. If (2) holds, there is a stationary element $a$ by Lemma \ref{comparable_connects_to_stationary}. Since $a_c\neq 0$ by Lemma \ref{support_of_stationary}, there is at least one core cycle so (3) holds. If (3) holds, any vertex of a cycle is a comparable generator of $F_E^\Gamma$ by Lemma \ref{stationary_support_cyclic} so (1) holds.   
\end{proof}

\subsection{The Core Lemma}\label{subsection_core_lemma}

The following lemma highlights an important property of a stationary element and justifies our terminology ``core'' -- if $a$ is stationary and $x^na$ can be produced from $a$ with some possible ``change'' $b$, then $x^{kn}a$, for some positive $k,$ can be produced by using the core part $a_c$ only with possibly some other ``change'' $c$ such that $c=0$ and $a_e=0$  if and only if $b=0.$

\begin{lemma}(The Core Lemma)
Let $a\in F_E^\Gamma$ be a stationary element with the core part $a_c$ and the exit part $a_e.$  
If $a\to x^na+b$ for some positive integer $n$ and some $b\in F_E^\Gamma,$ then $a_c\to x^{kn}a+c$ for some positive integer $k$ and some $c\in F_E^\Gamma$ such that $c+a_e\sim \sum_{i=0}^{k-1}x^{in}b.$
\label{core_lemma}
\end{lemma}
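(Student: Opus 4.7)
My plan is to first iterate the hypothesis: by Lemma \ref{support_of_stationary}(1), from $a\to x^na+b$ I obtain $a\to x^{kn}a+B_k$ with $B_k=\sum_{i=0}^{k-1}x^{in}b$ for any positive integer $k$. I then aim to construct, for a well-chosen large $k$, a separate sequence $a_c\to x^{kn}a+c$ for some $c\in F_E^\Gamma$. Once this is established, the required relation $c+a_e\sim B_k$ drops out by cancellation: since $a=a_c+a_e$ (as elements of $F_E^\Gamma$), the relations $a\sim x^{kn}a+B_k$ and $a_c\sim x^{kn}a+c$ give
\[
x^{kn}a+B_k\sim a\sim a_c+a_e\sim x^{kn}a+c+a_e,
\]
and Proposition \ref{cancellative} then yields $B_k\sim c+a_e$, as desired.

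To build the sequence $a_c\to x^{kn}a+c$, I would first use Lemma \ref{stationary_support_cyclic}(1): since every generator in $\supp(a_c)$ lies on a cycle, $a_c$ is itself stationary and satisfies $a_c\to x^{n_0}a_c+\beta$ for the core period $n_0$ of $\supp(a)_c$ and some $\beta\in F_E^\Gamma$. Choosing $k$ so that $kn$ is a multiple of $n_0$ (for instance, any multiple of $n_0/\gcd(n,n_0)$) and setting $M=kn/n_0$, iterating this relation $M$ times produces
\[
a_c\to x^{kn}a_c+B',\qquad B'=\sum_{i=0}^{M-1}x^{in_0}\beta.
\]
What remains is to rewrite $B'$ further so that it produces the missing piece $x^{kn}a_e$ plus some residue which will become $c$.

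The justification of this last rewriting uses Lemma \ref{support_of_stationary}(3): every exit generator $g\in\supp(a)_e$ lies on a path of some total length $\ell_g$ that exits some core cycle $C$ through a core generator $h$. Each traversal of $C$ during the iterated rewriting creates, via the bifurcation at the exit vertex on $C$, a contribution of the form $x^{jn_h+\ell_g}g$ once the exit path is followed to its end. For $k$ sufficiently large and appropriately aligned with these path lengths modulo cycle lengths, every monomial $x^{kn+m}g$ of $x^{kn}a_e$ can be produced as a further rewrite of a suitable portion of $B'$; what is left over (including bifurcation debris picked up along the exit paths) forms $c$. Combined with the previous step, this yields $a_c\to x^{kn}a_c+x^{kn}a_e+c=x^{kn}a+c$, and the cancellation argument from the first paragraph then completes the proof.

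The main obstacle is the orchestration in the last paragraph: producing each monomial of $x^{kn}a_e$ in exactly the right $x$-shift requires combinatorial bookkeeping of shortest paths from core to exit generators and their residues modulo cycle lengths, and careful handling of bifurcations along exit paths (especially when these introduce additional generators that must be absorbed into $c$). Choosing $k$ large enough and compatible with the relevant arithmetic constraints makes the construction possible, but this is where the essential work of the proof lies.
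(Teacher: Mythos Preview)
Your cancellation step at the end is correct and is exactly how the paper finishes: once $a_c\to x^{kn}a+c$ is in hand, adding $a_e$ and comparing with $a\to x^{kn}a+\sum_{i=0}^{k-1}x^{in}b$ gives $c+a_e\sim\sum_{i=0}^{k-1}x^{in}b$ by Proposition~\ref{cancellative}.

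The difficulty is entirely in building $a_c\to x^{kn}a+c$, and here your route diverges from the paper's and leaves a real gap. You start from Lemma~\ref{stationary_support_cyclic}, which produces $a_c\to x^{n_0}a_c+\beta$ using a \emph{generic} choice of minimal cycles for each core generator; this relation depends only on $\supp(a_c)$, not on the specific degrees appearing in $a_e$. You then need $B'=\sum_{i=0}^{M-1}x^{in_0}\beta$ to rewrite to $x^{kn}a_e+c$, i.e.\ for every monomial $x^mg$ of $a_e$ you must land on the \emph{exact} degree $kn+m$. The available degrees are of the form $in_0+j+\ell$ with $x^jg'$ a monomial of $\beta$ and $\ell$ the length of a path $g'\cto g$; hitting $kn+m$ is a congruence condition modulo the cycle lengths involved, and there is no reason it should hold from the generic data alone. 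It does hold, but only because $a$ (not merely $\supp(a)$) is stationary, and extracting that arithmetic compatibility from the hypothesis is precisely what remains to be done. Your last paragraph acknowledges this but does not supply the argument.

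The paper avoids this arithmetic entirely by never leaving the given relation $a\to x^na+b$. It puts a DAG structure on the exit generators (an edge $g\to h$ whenever $g\cto h$ in $E$), takes the set $V_{e0}$ of sources, and applies the Refinement Lemma to $a=a_c'+(a_c''+a_{e0}')+a_{e0}\to x^na+b$ with a suitable splitting of $a_c$. A connectivity argument (sources cannot be reached from other exit generators, nor from core generators that exit generators feed into) forces the monomials of $x^na_{e0}$ to lie in the piece coming from $a_c'$, giving $a_c'\to x^na_c'+x^na_{e0}+b_0$ directly---with the correct degrees built in. One then removes the sources and repeats on the smaller DAG. Because every step uses Refinement on the original $\to$-relation, the degree bookkeeping is automatic rather than something to be arranged by choosing $k$ cleverly.

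So: your overall shape is right, but the construction you propose via Lemma~\ref{stationary_support_cyclic} does not by itself control the degrees in $a_e$; the paper's layer-by-layer Refinement argument on the given relation is what makes this work.
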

\begin{proof} 
If $a_e=0,$ the claim trivially holds with $k=1$ and $c=b.$ If $a_e\neq 0,$ let $V=\supp(a)$ so that $V_e$ is nonempty. Let also  $V_c=V_c'\cup V_c''$ where $V_c'$ consists of the core generators in $V$ such that no exit generator connects to them and $V_c''$ consists of the core generators in $V$ such that some exit generators connect to them. By these definitions, no $g\in V_c''$ connects to any $h\in V_c'$ (otherwise $h$ would be in $V_c''$). Also, note that $V_c'$ is nonempty since otherwise some exit generator would be on a cycle which would make it a core, not an exit generator. Let also $a_c=a_c'+a_c''$ so that $\supp(a_c')=V_c'$ and $\supp(a_c'')=V_c''.$ Choose $n$ to be larger than the difference of degrees of any two monomials in a normal representation of $a$ by using Lemma \ref{support_of_stationary}(1) if $n$ is not already such. 

We construct a sequence of finite acyclic graphs $F_0\supsetneq F_1\supsetneq\ldots\supsetneq F_l\supsetneq \emptyset$ such that the sequence  terminates exactly when the claim is shown. 

{\bf Graph $\mathbf{F_0.}$}
Let us define a graph $F_0$ such that $V_e$ is the set of vertices of $F_0$ and that there is an edge from $g$ to $h$ for some $g,h\in V_e$ if $g$ connects to $h$ in $E.$ Since no $g\in V_e$ is on a cycle, the graph $F_0$ is acyclic.
Since $F_0$ is a finite and acyclic graph, it has a source by Lemma \ref{sources}. Let $V_{e0}$ be the set of sources of $F_0$ and $a_e=a_{e0}+a_{e0}'$ such that $\supp(a_{e0})=V_{e0}$ and $\supp(a_{e0}')=V_e-V_{e0}.$

By the Refinement Lemma \ref{confluence}(1), there are  $a_1,a_2,a_3\in F_E^\Gamma$ such that
\[a=a_c'+(a_c''+a_{e0}')+a_{e0}\to x^na+b=a_1+a_2+a_3\;\mbox{ and }\;a_c'\to a_1,\; a_c''+a_{e0}'\to a_2,\; a_{e0}\to a_3.\]
If $x^mg$ is any monomial of $x^na_{e0}$ for $g\in V_{e0},$  then $x^mg$ is a summand of either $a_1, a_2$ or $a_3.$ By Corollary \ref{decomposing_the_initial_term_lemma} and by the choice of $n,$ if $x^mg$ is a summand of $a_3$ then either $g$ is on a cycle or there is a path from another source of $F_0$ to $g$ and each of these options leads to a contradiction. If $x^mg$ is a summand of $a_2,$ then there is either a nontrivial path from some $g'\in V_e$ to $g$ or a nontrivial path from some $h\in V_c''$ to $g$ also by Corollary \ref{decomposing_the_initial_term_lemma} and by the choice of $n$. In the second case, there is $g'\in V_e$ and a path from $g'$ to $h$ and, hence, a nontrivial path from $g'$ to $g$ as well. Thus, both cases lead to a contradiction since $g$ is a source of $F_0.$ Hence, $x^mg$ has to be a summand of $a_1.$ Since the monomial $x^mg$ was arbitrary, $x^na_{e0}$ is a summand of $a_1.$ 
In addition, if $x^mh$ is any monomial of $x^na_c',$  $x^mh$ is a summand of $a_1$ also. Indeed, assuming that $x^mh$ is a summand of either $a_3$ or $a_2$ implies that $h$ is in $V_c''$ not $V_c'.$ Hence, for some $b_0\in F_E^\Gamma,$
\[a_c'\to a_1=x^na_c'+ x^na_{e0}+b_0.\]

If $a_{e0}'=0,$ we claim that the process is complete. In this case, $a_e=a_{e0}.$ The support of $a_c''$ consists of core generators so $a_c''$ is stationary by Lemma \ref{stationary_support_cyclic}. Let $m$ be the least common multiple of $n$ and the core period of $a_c''$ and let $m=kn.$
Let $b''_0$ be such that $a_c''\to x^{kn} a_c''+b''_0.$ After repeated use of the relation $a_c'\to x^na_c'+x^na_e+b_0$ for $k$ times, 
we have that  
$$a_c'\to x^{kn}a_c'+x^{kn}a_e+\sum_{i=1}^{k-1}x^{in}a_e+\sum_{i=0}^{k-1} x^{in}b_0.$$ Thus, 
\[
a_c=a_c'+a_c'' \to  x^{kn}a_c'+x^{kn}a_e+\sum_{i=1}^{k-1}x^{in}a_e+\sum_{i=0}^{k-1} x^{in}b_0+ x^{kn} a_c''+b''_0= 
x^{kn}a+\sum_{i=1}^{k-1}x^{in}a_e+\sum_{i=0}^{k-1} x^{in}b_0+b''_0=x^{kn}a+c
\] for $c=\sum_{i=1}^{k-1}x^{in}a_e+\sum_{i=0}^{k-1} x^{in}b_0+b''_0.$ Thus, $a=a_c+a_e\to x^{kn}a+c+a_e.$ On the other hand, 
$a\to x^{kn}a+\sum_{i=0}^{k-1}x^{in}b$ holds by part (1) of Lemma \ref{support_of_stationary}. Thus,  
$$x^{kn}a+c+a_e\sim x^{kn}a+\sum_{i=0}^{k-1}x^{in}b\;\;\;\mbox{ which implies }\;\;\;c+a_e\sim\sum_{i=0}^{k-1}x^{in}b.$$ 

If $a_{e0}'\neq 0,$ we construct $F_1.$ 

{\bf Graph $\mathbf{F_1.}$} Let $F_1$ be the graph obtained by eliminating the sources and all edges they emit from $F_0.$ Then $F_1$ is a finite acyclic graph which is a proper subgraph of $F_0.$ Let $V_{e1}$ be the set of the sources of $F_1$ and $a_e=a_{e0}+a_{e1}+a_{e1}'$ be such that $\supp(a_{e1})=V_{e1}$ and $\supp(a_{e1}')=V_e-V_{e0}-V_{e1}.$ Let also $a_c''=a_{c0}''+a_{c1}''$
such that $a_{c0}''$ consists of those monomials $x^mh$ of $a_c''$ such that $g\cto h$ for some $g\in V_{e0}$ and $a_{c1}''$ consists of all other monomials of $a_c''.$
Using the Refinement Lemma \ref{confluence}(1) again, there are $a_1', a_2', a_3'\in F_E^\Gamma$ such that 
\[a=(a_c'+a_{c0}''+a_{e0})+(a_{c1}''+a_{e1}')+a_{e1}\to x^na+b=a_1'+a_2'+a_3'\] 
and that $a_c'+a_{c0}''+a_{e0}\to a_1'$, $a_{c1}''+a_{e1}'\to a_2'$, $a_{e1}\to a_3'.$ 
If $x^mg$ is any summand of $x^na_c'+x^na_{c0}''+x^na_{e0}+x^na_{e1},$ we can repeat the arguments from before to show that the assumption that $x^mg$ is a summand of $a_2'$ or $a_3'$ leads to a contradiction. Hence, $x^mg$ is a summand of $a_1'$ and so  
\[a_c'+a_{c0}''+a_{e0}\to a_1'=x^na_c'+x^na_{c0}''+x^na_{e0}+x^na_{e1}+b_1'\]
for some $b_1'\in F_E^\Gamma.$ If $k_1n$ is the least common denominator of $n$ and the core period of $a_{c0}''$, there is $b_1''\in F_E^\Gamma$ such that $a_{c0}''\to x^{k_1n}a_{c0}''+b_1''.$ Using the last two relations and the relation $a_c'\to x^na_c'+ x^na_{e0}+b_0$ from the first step for $k_1$ times, we have that 
\[a_c'+a_{c0}''\to x^{k_1n}(a_c'+ a_{e0})+\sum_{i=1}^{k_1-1}x^{in}a_{e0}+\sum_{i=0}^{k_1-1}x^{ni}b_0+x^{k_1n}a_{c0}''+b_1''\to\]
\[
x^{(k_1+1)n}(a_c'+a_{c0}''+a_{e0}+a_{e1})+x^{k_1n}b_1'+\sum_{i=1}^{k_1-1}x^{in}a_{e0}+\sum_{i=0}^{k_1-1}x^{ni}b_0+b_1''
=x^{(k_1+1)n}(a_c'+a_{c0}''+a_{e0}+a_{e1})+b_1\]
for $b_1=x^{k_1n}b_1'+\sum_{i=1}^{k_1-1}x^{in}a_{e0}+\sum_{i=0}^{k_1-1}x^{ni}b_0+b_1''.$

If $a_{e1}'=0,$ then $a_e=a_{e0}+a_{e1}.$ Let $kn$ be the least common multiple of $(k_1+1)n$ and the core period of $a_{c1}''.$ 
Arguing as in the case $a_{e0}'=0,$ we have that 
$a_c\to x^{kn}a+c$ for some $c\in F_E^\Gamma$ such that $\sum_{i=0}^{k-1}x^{in}b\sim c+a_e$ and this finishes the proof. If $a_{e1}'\neq 0,$  we construct $F_2$ and continue the process.  

This process eventually terminates since $V_e$ is a finite set. Hence, there is a positive integer $l$ such that $a_{el}'=0$ so that 
$a_c\to x^{kn}a+c$ for some $k$ and some $c.$ The relations $a\to x^{kn}a+\sum_{i=0}^{k-1}x^{in}b$ and $a\to x^{kn}a+c+a_e$ imply that $\sum_{i=0}^{k-1}x^{in}b\sim c+a_e$ which proves the lemma. 
\end{proof}

The Core Lemma has the following corollary, characterizing a stationary and periodic element, which we use  in the proof of  Theorem \ref{periodic}.   

\begin{corollary}
A stationary element $a$ is periodic if and only if the support of $a$ consists of regular vertices on cycles without exits. 
\label{stationary_periodic} 
\end{corollary}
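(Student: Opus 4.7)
The plan is to handle the two directions separately: the backward direction follows directly from Lemma \ref{stationary_support_cyclic}, and the forward direction combines the Core Lemma (Lemma \ref{core_lemma}) with cancellativity of $M_E^\Gamma$ (Proposition \ref{cancellative}).

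For the backward direction, assume $\supp(a)$ consists of regular vertices on cycles without exits. Every element of $\supp(a)$ then lies on a cycle, so Lemma \ref{stationary_support_cyclic}(1) says that $a$ is stationary, and since all core cycles are exit-free, Lemma \ref{stationary_support_cyclic}(2) says that $a$ is periodic.

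For the forward direction, assume $a$ is stationary and periodic with $a\sim x^m a$ for some positive integer $m.$ First, Lemma \ref{lemma_three_claims} applied to $a\precsim x^m a$ shows that every element of $\supp(a)$ is a regular proper vertex. It then suffices to prove $a_e=0$: once this is done, $a=a_c$ consists of vertices on cycles and Lemma \ref{stationary_support_cyclic}(2) applied to $a$ forces every core cycle to have no exits, completing the proof.

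To prove $a_e=0,$ begin with any stationary relation $a\to x^{n_0}a+b_0$ and, using Lemma \ref{support_of_stationary}(1), pass to $a\to x^n a+b$ for $n=mn_0$ and $b=\sum_{i=0}^{m-1}x^{in_0}b_0.$ Since $n$ is a multiple of $m,$ $a\sim x^{kn}a$ for every positive integer $k.$ The Core Lemma then furnishes a positive integer $k$ and an element $c\in F_E^\Gamma$ with $a_c\to x^{kn}a+c,$ which in the cancellative monoid $M_E^\Gamma$ reads $[a_c]=x^{kn}[a]+[c].$ Adding $[a_e]$ and using $[a]=[a_c]+[a_e]=x^{kn}[a]$ gives $x^{kn}[a]=x^{kn}[a]+[c]+[a_e],$ and cancellation yields $[c]+[a_e]=0$ in $M_E^\Gamma.$ Hence $c+a_e\sim 0$ in $F_E^\Gamma,$ and since axioms (A1)--(A3) preserve nonzero-ness in both directions, the only way for a nonzero element of $F_E^\Gamma$ to be $\sim$-related to $0$ is to equal $0,$ so $c+a_e=0$ in the free commutative monoid, forcing $c=0$ and $a_e=0.$ The main subtlety is the coordination of the two periods: one must choose the period $n$ in the stationary relation to be a common multiple of the original period and the periodicity period $m,$ so that $kn$ remains a valid period for $a$ in both senses; once this is arranged, the cancellation argument is routine.
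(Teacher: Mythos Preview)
Your proof is correct and follows essentially the same route as the paper, with the Core Lemma and Lemma~\ref{stationary_support_cyclic} doing the main work in both directions. The one minor variation is how you reach $c+a_e\sim 0$: the paper first observes that if $a$ is periodic and $a\to x^na+b$ then $b=0$ (otherwise $a\succ x^na$ would make $a$ aperiodic, contradicting the periodic/aperiodic dichotomy established after Definition~\ref{definition_of_comparable}), so the Core Lemma's conclusion $c+a_e\sim\sum_{i=0}^{k-1}x^{in}b=0$ finishes immediately; you instead synchronize the stationary period with the periodicity period $m$ and use cancellativity directly, which sidesteps the dichotomy at the cost of the extra bookkeeping with $n=mn_0$. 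Your early use of Lemma~\ref{lemma_three_claims} to obtain regularity is also fine, though the paper simply reads it off at the end from ``vertex on a cycle with no exit emits exactly one edge.''
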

\begin{proof}
Let $a$ be such that $a\to x^na+b$ for some $b$ and positive $n$. If $a$ is periodic, then $b=0.$ 
By the Core Lemma \ref{core_lemma}, $a_c\to x^{kn}a+c$ for some $k$ and some $c$ such that $\sum_{i=0}^{k-1}x^{in}b\sim a_e+c.$ So $b=0$ implies that $a_e=0$ (and $c=0$). Hence, $a=a_c.$ This enables us to use Lemma \ref{stationary_support_cyclic} which implies that the support of $a$ consists of generators on cycles without exits so that these generators are regular vertices. 

For the converse, assume that the support of $a$ consists of core vertices on cycles without exits. If $n$ is the core period, then $a\to x^na$ so $a$ is both stationary and periodic. 
\end{proof}

\subsection{The stationary-partition}\label{subsection_partition}

By Lemma \ref{support_of_stationary}, the support of a stationary element is a stationary set. By Lemma \ref{stationary_support_cyclic}, the converse is true if a stationary set contains no exit generators. It would be convenient to have the converse of this fact in general. However, the exit generators can complicate the situation as the next example shows. 
 
Part (2) of Example \ref{example_stationary} shows that we need additional requirements for any element with a stationary support to be stationary. In particular, these requirements impose restrictions on powers of $x$ which appear in the normal form of such element. 

Let $a$ be stationary such that $a\to x^na+b$ holds for some positive $n$ and some $b.$  If $a=\sum_{i=1}^{k}x^{m_i}g_i$ is a normal representation of $a,$ by repeated use of the Refinement Lemma \ref{confluence}(1), there are mutually disjoint subsets $I_1, \ldots, I_k$ of $\{1, 2, \ldots, k\}$ whose union is $\{1, 2, \ldots, k\}$ and there are $b_1,\ldots, b_k$ such that  
\[a= \sum_{i=1}^{k}\sum_{j\in I_i} x^{m_j}g_j\;\mbox{ and }\;\; b=\sum_{i=1}^{k}b_i\]
and that for every $i=1, \ldots, k$ 
\begin{equation}\label{arrow_equation} \tag{Rel. 1}
x^{m_i}g_i\to \sum_{j\in I_i} x^{m_j+n}g_j+b_i.
\end{equation}
The set $I_i$ can be empty if $i$ is in $I_{i'}$ for some $i'\neq i$ (see also Example \ref{example_core_partition} below). If $I_i\neq\emptyset,$ Corollary \ref{converse_connecting} applied to relation (\ref{arrow_equation}) ensures the existence of a path $p_{ij}$ connecting $g_i$ and $g_j$ such that  
\begin{equation}\label{combinatorial_equation} \tag{Rel. 2}
m_i+|p_{ij}|=m_j+n.
\end{equation}
By Lemma \ref{support_of_stationary}(1), we can choose $n$ such  that $n>m_i-m_j$ so that $|p_{ij}|=n+m_j-m_i>0$ for all $i,j=1,\ldots, k.$ The requirement that $p_{ij}$ has positive length justifies the following definition and implies direction $\Rightarrow$ of Proposition \ref{key_lemma}. 

\begin{definition} \label{core_partition}
Let $a\in F_E^\Gamma$ have a stationary support $V$ and a normal representation $a=\sum_{i=1}^{k}x^{m_i}g_i.$ We say that $a$ {\em has a stationary-partition} if there is a positive integer $n,$ mutually disjoint subsets $I_1, \ldots, I_k$ of $\{1, 2, \ldots, k\}$ with $\bigcup_{i=1}^k I_i=\{1, 2, \ldots, k\}$ and paths $p_{ij}$ of {\em positive length} for $i=1,\ldots, k$ and $j\in I_i$ with $\so(p_{ij})=g_i$ and $\ra(p_{ij})=g_j$ and such that relation (\ref{combinatorial_equation}) holds for each $i=1, \ldots, k$ and $j\in I_i.$
\end{definition}

The following example shows that a stationary-partition does not have to be unique. 
\begin{example}
Let $E$ be the following graph $\;\;\xymatrix{{\bullet}^{v_1}\ar@(lu,ld)  \ar[r] & {\bullet}^{v_2}\ar@(ru,rd)}\;\;\,.$
Then $v_1+v_2$ is stationary since $v_1\to xv_1+xv_2$ and so $v_1+v_2\to x(v_1+v_2)+v_2$ and $k=2$ in this case.  
We can take $I_1=\{1,2\}$ and $I_2=\emptyset $ since $v_1$ ``produces'' both terms of $x(v_1+v_2).$ In this case, relations (\ref{arrow_equation}) are 
\[v_1\to xv_1+xv_2\;\;\mbox{ and }\;\;v_2\to v_2.\]
However, $v_2\to xv_2$ also, so the summand $xv_2$ can be ``produced'' by $v_2$ also. Hence, $v_1+v_2$ is stationary also because $v_1+v_2\to xv_1+xv_2+v_2\to xv_1+xv_2+xv_2=x(v_1+v_2)+xv_2.$ 
So, we can also take $I_1=\{1\},$ $I_2=\{2\}.$ In this case, relations (\ref{arrow_equation}) are 
\[v_1\to xv_1+xv_2\;\;\mbox{ and }\;\;v_2\to xv_2.\]
\label{example_core_partition}
\end{example}

We characterize a stationary element in terms of the properties of the generators in its support which is the final and key step towards Theorem \ref{comparable}. 

\begin{proposition}
Let $a\in F_E^\Gamma$ be an element such that $\supp(a)=V$ is stationary. Then $a$ is stationary if and only if $a$ has a stationary-partition. 
\label{key_lemma} 
\end{proposition}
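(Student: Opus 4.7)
The forward direction $\Rightarrow$ is already fully spelled out in the paragraph preceding Definition \ref{core_partition}: repeated application of the Refinement Lemma \ref{confluence}(1) to $a\to x^na+b$ yields the disjoint subsets $I_i$ and the identities (\ref{arrow_equation}), Corollary \ref{converse_connecting} extracts for each $j\in I_i$ a path $p_{ij}$ connecting $g_i$ to $g_j$ of length $m_j+n-m_i$, and enlarging $n$ via Lemma \ref{support_of_stationary}(1) forces these lengths to be positive. In the write-up I would just cite that discussion.

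For $\Leftarrow$, my plan is to reduce to a single-block claim and then invoke Proposition \ref{connecting}. Explicitly, the block claim I aim for is: for each $i$ with $I_i\ne\emptyset$ there exists $b_i'\in F_E^\Gamma$ with
\[
 g_i\to\sum_{j\in I_i}x^{|p_{ij}|}g_j+b_i'.
\]
Granting the claim, set $b_i:=x^{m_i}b_i'$; multiplying by $x^{m_i}$ and using the relation $m_i+|p_{ij}|=m_j+n$ from Definition \ref{core_partition} gives $x^{m_i}g_i\to\sum_{j\in I_i}x^{m_j+n}g_j+b_i$. Summing these relations over all $i$ with $I_i\ne\emptyset$, together with the trivial relations $x^{m_i}g_i\to x^{m_i}g_i$ for $I_i=\emptyset$, and using $\bigcup_iI_i=\{1,\ldots,k\}$ to re-assemble $\sum_i\sum_{j\in I_i}x^{m_j+n}g_j=x^na$, one obtains $a\to x^na+b$ for
\[
b=\sum_{i\colon I_i\ne\emptyset}b_i+\sum_{i\colon I_i=\emptyset}x^{m_i}g_i,
\]
which shows that $a$ is stationary.

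To prove the block claim I would appeal to the direction $(2)\Rightarrow(1)$ of Proposition \ref{connecting}, applied with $a=g_i$. To match its hypotheses, I would enlarge the family $\{p_{ij}\colon j\in I_i\}$, with its declared endpoints $g_j$, to a finite family that also satisfies conditions (i), (ii), (iii) of Proposition \ref{connecting}(2). The enlargement is carried out by repeated \emph{saturation}: at each prefix $p$ ending at a regular vertex $v$ with $P_p$ nonempty but properly contained in $\so^{-1}(v)$, I would adjoin the one-edge extensions $p\cdot e$, with new endpoint $\ra(e)$, for every missing edge $e\in\so^{-1}(v)\setminus P_p$; at each prefix $p$ ending at an infinite emitter $v$ with $P_p$ nonempty but not yet covered by an endpoint of the form $q_Z^v$ with $P_p\subseteq Z$, I would adjoin the endpoint $q_{P_p}^v$ along $p$ itself. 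The resulting finite enlarged family defines an element $c_i=\sum_{j\in I_i}x^{|p_{ij}|}g_j+b_i'$, and Proposition \ref{connecting}(2) then yields $g_i\to c_i$, establishing the block claim.

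The main obstacle is verifying that the saturation process terminates. The crucial observation is that saturating a vertex $v$ only adjoins paths which end immediately at the newly introduced range vertex $\ra(e)$ and do not travel further, so $P_w$ is unchanged at every $w\ne v$; consequently, once a vertex has been saturated it remains saturated. A newly introduced endpoint $\ra(e)$ either coincides with a vertex already processed (and hence already saturated) or is genuinely new, in which case no edge out of it lies on any path in the current family, so $P=\emptyset$ there and no further action is needed. Since the original paths $p_{ij}$ traverse only finitely many vertices, only finitely many saturation steps occur, and the enlargement terminates with a finite family as required by Proposition \ref{connecting}(2).
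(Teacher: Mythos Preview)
Your forward direction matches the paper exactly.

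For the converse, your overall structure---establishing per-block relations $x^{m_i}g_i\to\sum_{j\in I_i}x^{m_j+n}g_j+b_i$ and then summing over $i$ using $\bigcup_i I_i=\{1,\ldots,k\}$---is precisely the paper's strategy. The difference lies in how the block relation itself is justified. You route through Proposition~\ref{connecting}$(2)\Rightarrow(1)$ and devise a saturation procedure to force the hypotheses of that proposition; the paper instead obtains the block relation directly by ``applying the axioms following the paths $p_{ij}$,'' i.e., iteratively applying (A1)/(A2)/(A3) at the vertices along the $p_{ij}$ and collecting all bifurcation terms into $b_i$. This is exactly the mechanism underlying Definition~\ref{definition_connecting} (the clause $g\cto^p h\Rightarrow g\to x^{|p|}h+a$), run simultaneously for the several paths emanating from $g_i$, so no appeal to Proposition~\ref{connecting}---and hence no saturation bookkeeping---is needed. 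Your saturation argument is a detour: the extra endpoints you adjoin are precisely the bifurcation terms that the direct approach produces automatically, and the termination analysis you flag as the main obstacle simply does not arise in the paper's route. Both arguments are sound, but the paper's is shorter and conceptually cleaner.
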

\begin{proof}
We showed that direction $\Rightarrow$ holds before Definition \ref{core_partition}. To summarize, if $a=\sum_{i=1}^{k}x^{m_i}g_i\to x^na+b$ holds for some $n$ and some $b,$ use Lemma \ref{support_of_stationary}(1) to choose $n>m_i-m_j$ for all $i,j=1,\ldots, k.$ Repeated use of the Refinement Lemma \ref{confluence}(1) produces required sets $I_1, \ldots, I_k$ such that relations (\ref{arrow_equation}) hold for $i=1, \ldots, k.$ Using Corollary \ref{converse_connecting} produces paths $p_{ij}$ and our choice of $n$ ensures that the paths $p_{ij}$ have positive length so that relations (\ref{combinatorial_equation}) hold. Thus, $a$ has a stationary-partition. 

Conversely, let $a$ have a stationary-partition and let $n$, $I_1,\ldots, I_k$ and $p_{ij}$ be as in Definition \ref{core_partition}. Starting with $x^{m_i}g_i$ and applying the axioms following the paths $p_{ij}$ for all $i=1,\ldots, k$ and all $j\in I_i,$ we obtain $x^{m_i}g_i\to \sum_{j\in I_i} x^{m_i+|p_{ij}|}g_j+b_i$ for some $b_i\in F_E^\Gamma$ for $i=1,\ldots, k.$ By relations (\ref{combinatorial_equation}), 
\[x^{m_i}g_i\to \sum_{j\in I_i} x^{m_i+|p_{ij}|}g_j+b_i =\sum_{j\in I_i} x^{m_j+n}g_j+b_i\]
which shows that relations (\ref{arrow_equation}) hold for all $i.$ Adding these relations together produces 
\[a=\sum_{i=1}^{k}x^{m_i}g_i\to \sum_{i=1}^{k}\left(\sum_{j\in I_i} x^{m_j+n}g_j+b_i\right)=x^n\sum_{i=1}^{k}\sum_{j\in I_i} x^{m_j}g_j+\sum_{i=1}^{k}b_i=x^na+\sum_{i=1}^{k}b_i
\]
where the last equality holds since  $I_1,\ldots, I_k$ are disjoint and their union is $\{1,\ldots, k\}.$ Letting $b=\sum_{i=1}^{k}b_i,$ we have that $a\to x^na+b.$ Hence, $a$ is stationary.
\end{proof}

\begin{remark}
Since the relation $a\to x^na+b$ holds for some $n$ and $b$ if and only if $a_c\to x^ma+c$ holds for some $m$ and $c$ by the Core Lemma \ref{core_lemma}, we can also consider a partition of $x^ma+c$ based on a normal representation of $a_c$ instead of $a.$ If Definition \ref{core_partition} is modified accordingly, Proposition \ref{key_lemma} can be formulated to state that $a$ is stationary if and only if $a$ has a partition based on its core part $a_c$.  
\end{remark}

Proposition \ref{key_lemma} also reaffirms Lemma \ref{stationary_support_cyclic} since if an element $a$ has the stationary support consisting of core generators only, then $a$ has a stationary-partition. Indeed, if $a=a_c,$ one can take $n$ to be the core period and $I_i=\{i\}.$ If $n_{g_i}$ is the minimum of the set of lengths of cycles on which $g_i$ is and if $n=l_in_{g_i},$ one can take $p_{ii}$ to be the path obtaining by traversing a cycle of length $n_{g_i}$ $l_i$ times starting at $g_i$ so that $|p_{ii}|=n.$ Thus, relation (\ref{combinatorial_equation}) holds trivially for each $i$ since $m_i+n=m_i+n$ and so $a$ has a stationary-partition.  

\subsection{Characterization of comparability}\label{subsection_comparability}
Using Propositions \ref{connecting} and \ref{key_lemma}, we prove Theorem \ref{comparable} characterizing a comparable element. 

\begin{theorem}
The following conditions are equivalent for an element $a\in F^\Gamma_E.$
\begin{enumerate}[\upshape(1)]
\item The element $a$ is nonzero and comparable. 
\item There is a stationary element $b$ such that $a\to b.$  
\item There is an element $b$ with a stationary support and a stationary-partition such that condition (2) from Proposition \ref{connecting} holds for $a$ and $b.$  
\end{enumerate}
\label{comparable} 
\end{theorem}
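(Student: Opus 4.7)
The plan is to prove the theorem through the cycle $(1) \Rightarrow (2) \Rightarrow (1)$ together with the equivalence $(2) \Leftrightarrow (3)$, in each case appealing to one of the preceding results. For $(1) \Rightarrow (2)$, I would first unpack the hypothesis: by Definition \ref{definition_of_comparable} and the definition of $\precsim$, saying that $a$ is nonzero and comparable amounts to $a \sim x^n a + b$ for some positive integer $n$ and some $b \in F_E^\Gamma$. This is exactly the hypothesis of Lemma \ref{comparable_connects_to_stationary}, which then supplies a nonzero $c \in F_E^\Gamma$ with $a \to c$ and $c \to x^n c + d$ for some $d \in F_E^\Gamma$; the latter relation is precisely the definition of $c$ being stationary.

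For $(2) \Rightarrow (1)$, suppose $a \to b$ with $b$ stationary, so that $b \to x^n b + d$ for some positive $n$ and some $d \in F_E^\Gamma$. First I would observe that $a \neq 0$: stationary elements are nonzero by Definition \ref{definition_stationary}, while $\to_1$ is defined only on $F_E^\Gamma - \{0\}$, so the reflexive-transitive closure $\to$ cannot link $0$ to a nonzero element. Next, chaining $a \to b \to x^n b + d$ gives $a \sim x^n b + d$, while the $\Gamma$-equivariance of $\sim$ applied to $a \sim b$ yields $x^n a \sim x^n b$. Combining these two yields $a \sim x^n a + d$, which reads as $x^n a \precsim a$, equivalently $a \succsim x^n a$, so $a$ is comparable.

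Finally, for $(2) \Leftrightarrow (3)$, I would simply layer three previous results. By Lemma \ref{support_of_stationary}(4), the support of any stationary element is a stationary set. By Proposition \ref{key_lemma}, among elements with stationary support the stationary ones are exactly those admitting a stationary-partition. And by Proposition \ref{connecting}, the relation $a \to b$ is equivalent to condition (2) of that proposition holding for $a$ and $b$. Composing these three equivalences delivers $(2) \Leftrightarrow (3)$. The theorem is in this sense a packaging statement; the genuine obstacles have already been confronted in the preceding lemmas and propositions, and the only points requiring a touch of care are the translation step at the start of $(1) \Rightarrow (2)$ and the nonzero-check in $(2) \Rightarrow (1)$.
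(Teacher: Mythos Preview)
Your proposal is correct and follows essentially the same route as the paper: $(1)\Rightarrow(2)$ via Lemma~\ref{comparable_connects_to_stationary}, $(2)\Rightarrow(1)$ by noting that $a\sim b$ transfers comparability (and nonzeroness) from $b$ to $a$, and $(2)\Leftrightarrow(3)$ by combining Propositions~\ref{connecting} and~\ref{key_lemma}. Your version is slightly more explicit in two places---you spell out the chain $a\sim x^n b+d\sim x^n a+d$ rather than just saying ``$a\sim b$ so $a$ is comparable,'' and you correctly flag Lemma~\ref{support_of_stationary}(4) as the reason Proposition~\ref{key_lemma}'s hypothesis is met---but these are expansions of the paper's argument, not departures from it.
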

\begin{proof}
The implication (1) $\Rightarrow$ (2) holds by Lemma \ref{comparable_connects_to_stationary}. Conversely, if (2) holds, then $b$ is nonzero and comparable. The relation $a\to b$ implies $a\sim b$ so $a$ is nonzero and comparable as well.

The equivalence (2) $\Leftrightarrow$ (3) follows directly from Propositions \ref{connecting} and \ref{key_lemma}.   
\end{proof}

In Theorem \ref{all_comparable}, we characterize when every element of $F_E^\Gamma$ is comparable. First, we show the following corollary of Proposition \ref{connecting} and Lemmas \ref{comparable_connects_to_stationary} and \ref{support_of_stationary} which we use in the proof of Theorem \ref{all_comparable}. 

\begin{corollary} Let $v$ be an infinite emitter. 
\begin{enumerate}[\upshape(1)] 
\item If $v$ connects to $q^v_Z$ by a path of positive length, then $v$ is on a cycle. 

\item If $q_Z^v$ connects to $q_W^v$ by a path of positive length, then $q_Z^v$ is on a cycle. 

\item If $q_W^v$ is on a cycle, then $v$ is on a cycle and $q^v_Z$ is on a cycle for every $\emptyset\neq Z\subseteq W.$ 

\item If $v$ is comparable, then $v$ is on a cycle. 

\item If $q^v_Z$ is comparable, then $q_Z^v$ is on a cycle.
\end{enumerate}
\label{infinite_emitter_corollary} 
\end{corollary}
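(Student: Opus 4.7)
The plan for parts (1)--(3) is to unpack Definition \ref{definition_connecting} and the definition of a generator being on a cycle that immediately follows it. For (1), case (ii) of Definition \ref{definition_connecting} reads the hypothesis $v\cto^p q^v_Z$ as $p$ being a path from $v$ to $v$ (since $q^v_Z$ is associated with $v$), and the remark accompanying that case already records that $|p|>0$ forces $v$ onto a cycle. For (2), case (iv) with $|p|>0$ writes $p=eq$ with $e\notin Z$ and $p$ running from $v$ to $v$; then $eq$ is a cycle at $v$ whose initial edge avoids $Z$, which is exactly the defining condition for $q^v_Z$ to be on a cycle. For (3), if $(e,p)$ witnesses that $q^v_W$ is on a cycle---so $e\in\so^{-1}(v)-W$ and $ep$ is a cycle at $v$---then $ep$ is a positive-length path from $v$ to $v$, proving $v$ is on a cycle; and $\emptyset\neq Z\subseteq W$ gives $\so^{-1}(v)-W\subseteq \so^{-1}(v)-Z$, so the same witness shows that $q^v_Z$ is on a cycle.

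For (4), comparability of $v$ gives $v\sim x^nv+b$ for some positive $n$ and some $b$, so Lemma \ref{comparable_connects_to_stationary} delivers a stationary $c$ with $v\to c$ and positive $m$, $d\in F_E^\Gamma$ with $c\to x^mc+d$. The key structural observation is that in any reduction sequence starting from the single monomial $v$, the monomial $v$ can only be consumed by axiom (A2), since (A1) requires a regular vertex and (A3) an improper one. An application of (A2) produces some $q^v_Y$, and subsequent (A3)-applications can only enlarge its subscript; thus either $c=v$ (if no axiom is applied at all) or some $q^v_W$ lies in $\supp(c)$. In the first case, Corollary \ref{converse_connecting} applied to $v\to x^mv+d$ yields a path of length $m>0$ from $v$ to $v$, so $v$ is on a cycle. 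In the second case, I will use Proposition \ref{key_lemma} to produce a stationary-partition of $c$, obtaining a generator $h\in\supp(c)$ and a path $p_2$ of positive length with $h\cto^{p_2}q^v_W$; Proposition \ref{connecting} applied to $v\to c$ supplies a path $p_1$ with $v\cto^{p_1}h$. Reading both via Definition \ref{definition_connecting}, the concatenation $p_1p_2$ is a positive-length path from $v$ to $v$ in the underlying graph, so $v\cto^{p_1p_2}q^v_W$ via case (ii), and part (1) concludes.

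Part (5) runs in parallel with $q^v_Z$ in place of $v$: the only axiom that can act on the monomial $q^v_Z$ is (A3), which produces some $q^v_W$ with $W\supsetneq Z$, and subsequent (A3)-applications only enlarge this subscript. Hence either $c=q^v_Z$, in which case Corollary \ref{converse_connecting} gives a positive-length connecting path from $q^v_Z$ to itself and part (2) finishes, or $\supp(c)$ contains some $q^v_W$ with $W\supseteq Z$. In the latter case, the stationary-partition of $c$ together with Proposition \ref{connecting} yield a positive-length connecting path $p_1p_2$ from $q^v_Z$ to $q^v_W$, and part (2) gives the conclusion.

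The hard part will be verifying that the concatenation $p_1p_2$ is genuinely a valid connecting path per Definition \ref{definition_connecting}. The only delicate requirement arises in (5): the first edge of $p_1p_2$ must avoid $Z$. A short case split handles this---if $p_1$ is trivial, then case (iv) forces $h=q^v_Y$ with $Y\supseteq Z$, and the initial edge of the positive-length $p_2$ avoids $Y$ and hence $Z$; if $p_1$ has positive length, its initial edge already avoids $Z$ by cases (iii)/(iv) applied to $q^v_Z\cto^{p_1}h$.
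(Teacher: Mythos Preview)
Your proof is correct. Parts (1)--(3) follow the paper's approach of unpacking Definition~\ref{definition_connecting}, though you read off the conclusions even more directly than the paper (which instead passes through the relation $\to$ and argues about which axioms must fire). One small imprecision in (2): the path $eq$ you obtain is a \emph{closed path} at $v$, not necessarily a cycle in the strict sense (no repeated internal vertices). This does not matter, since the primary definition of ``$q^v_Z$ is on a cycle'' only requires $q^v_Z\cto^{p'}q^v_Z$ for some $p'$ of positive length, and your same $p=eq$ witnesses this via case (iv); alternatively, the first return to $v$ along $eq$ extracts a genuine cycle with initial edge $e\notin Z$.

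For parts (4) and (5), your argument is correct but takes a different route from the paper. The paper uses Lemma~\ref{support_of_stationary}(4) (the exit structure of a stationary support) and then performs a nested case split on whether the connecting path to $q^v_Z$ is trivial, whether $q^v_Z$ itself sits on a cycle, and so on. You instead invoke Proposition~\ref{key_lemma} to obtain a stationary-partition of $c$, which immediately hands you a generator $h\in\supp(c)$ together with a \emph{positive-length} path $h\cto^{p_2}q^v_W$, and then concatenate with $p_1$ coming from Proposition~\ref{connecting}. This buys you a cleaner, more uniform argument with a single case split (on whether $p_1$ is trivial) handled neatly at the end. The paper's route is slightly more elementary in that Lemma~\ref{support_of_stationary} precedes Proposition~\ref{key_lemma}, but yours avoids the multi-level case analysis.
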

\begin{proof}
To show (1), assume that $v\cto^p q_Z=q_Z^v$ for some path $p$ of positive length $n.$ Then $v\to x^n q_Z+a$ for some $a\in F_E^\Gamma.$ 
By the nature of axioms (A2) and (A3), there has to be a term $x^nv$ produced at some point. Hence, $v\to x^nv+b$ for some $b$ which implies that  
$v$ is on a cycle.  

To show (2), assume that $q_Z=q_Z^v\cto^p q_W=q_W^v$ for some path $p$ of positive length $n.$ Then $q_Z\to x^n q_W+a$ for some $a.$ 
By the nature of axioms (A2) and (A3), there has to be a term $x^nv$ produced at some point using a cycle $c$ based at $v$ such that the first edge of $c$ is not in $Z$. Hence, $q_Z\to x^nv+b\to x^nq_Z+c$ for some $b$ and $c$ by (A2) and so $q_Z$ is on a cycle.  

By Definition \ref{definition_connecting}, if $q_W^v$ is on a cycle, then there is a cycle based at $v$ such that the first edge $e$ of that cycle is not in $W.$ So, $v$ is on a cycle. If $Z\subseteq W,$ then $e\notin Z$ and so $q_Z$ is also on a cycle by Definition \ref{definition_connecting}. This shows (3).

To show (4), let $v$ be comparable. By Lemma \ref{comparable_connects_to_stationary}, there is a stationary element $a$ such that $v\to a$. If $a=v,$ then $v$ is stationary and it is necessarily on a cycle by  part (2) of Lemma \ref{support_of_stationary}. So, assume that $a\neq v.$ By Proposition \ref{connecting}, if $a=\sum_{j=1}^l x^{t_j}h_j$ is a normal representation of $a,$ there are paths $p_{j}, j=1,\ldots, l$ such that $v\cto^{p_j} h_j,$ $t_j=|p_j|,$ and at least one of $h_j$ is $q_Z^v$ for some $Z.$ Reordering the terms we can assume that $j=1.$ If $p_1$ has positive length, then $v$ is on a cycle by part (1). If $q_Z$ is on a cycle, then $v$ is on a cycle by part (3). So, let us consider the remaining case when $p_1$ is trivial and $q_Z$ is not on a cycle. In this case, $q_Z$ has to be on an exit from a core cycle by part (4) of Lemma \ref{support_of_stationary}. So, there is $j>1$ such that $h_j$ is on a cycle and $h_j\cto^p q_Z$ for some path $p.$ Hence, $v\cto^{p_j}h_j \cto^p q_Z.$ If $|p|>0,$ then $v$ connects to $q_Z$ by a path $p_jp$ of positive length and so $v$ is on a cycle by part (1). If $|p|=0,$ then either $h_j=v,$ in which case $v$ is on a cycle, or $h_j=q_{Z'}$ for some $Z'\subsetneq Z$ in which case $v$ is also on a cycle by part (3). 

To show (5), let $q_Z$ be comparable. By Lemma \ref{comparable_connects_to_stationary}, there is a stationary element $a$ such that $q_Z\to a.$ If $a=q_Z,$ then $q_Z$ is stationary and it is necessarily on a cycle by part (2) of Lemma \ref{support_of_stationary}. So, assume that $a\neq q_Z.$ By Proposition \ref{connecting}, if $a=\sum_{j=1}^l x^{t_j}h_j$ is a normal representation of $a,$ there are paths $p_{j}, j=1,\ldots, l$ such that $q_Z\cto^{p_j} h_j,$ $t_j=|p_j|,$ and at least one of $h_j$ is $q_W^v$ for some $W\supsetneq Z.$ Reordering the terms we can assume that $j=1.$ If $p_1$ has positive length, then $q_Z$ is on a cycle by part (2).  
If $q_W$ is on a cycle, then $q_Z$ is on a cycle by part (3). So, let us consider the remaining case when $p_1$ is trivial and $q_W$ is not on a cycle. By  part (4) of Lemma \ref{support_of_stationary}, there is $j>1$ such that $h_j$ is on a cycle and $h_j\cto^p q_W$ for some path $p.$ So, $q_Z\cto^{p_j}h_j \cto^p q_W.$ If $|p|>0,$ $q_Z$ connects to $q_W$ by a path $p_jp$ of positive length and so $q_Z$ is on a cycle by part (2). If $|p|=0,$ then either $h_j=v$ or $h_j=q_{Z'}$ for some $Z'\subsetneq W.$ In the first case, $q_Z\cto^{p_j}v$ so there is a cycle based at $v$ such that its first edge $e$ is not in $Z$ and so $q_Z$ is on that cycle by Definition \ref{definition_connecting}. In the second case, $q_Z\cto^{p_j}q_{Z'}.$ If $|p_j|>0,$ then $q_Z$ is on a cycle by part (2). If $|p_j|=0,$ then $Z\subseteq Z'.$ Since $q_{Z'}$ is on a cycle, there is a cycle based at $v$ such that the first edge $e$ of it is in $\so^{-1}(v)-Z'.$ Hence, $e\notin Z$ and so $q_Z$ is on a cycle by Definition \ref{definition_connecting}. 
\end{proof}

\begin{theorem}
The following conditions are equivalent.  
\begin{enumerate}[\upshape(1)]
\item Every element $a\in F_E^\Gamma$ is comparable.
\item Every generator of $F_E^\Gamma$ is comparable. 
\item For every generator $g$ of $F_E^\Gamma,$ $g\to a$ for some stationary element $a.$  
\item The following hold for every generator $g$ of $F_E^\Gamma.$ 
\begin{enumerate}[\upshape(a)]
\item The generator $g$ is not a sink and it connects to a cycle. 
\item If $g$ is an infinite emitter or an improper vertex, then $g$ is on a cycle.
\item If $g$ is regular, there is stationary $a\in F_E^\Gamma$ with the exit part zero such that $g\to a.$   
\end{enumerate}

\item The following hold for every vertex $v$ of $E.$ 
\begin{enumerate}[\upshape(a)]
\item The vertex $v$ is not a sink and it connects to a cycle. 
\item If $v$ is an infinite emitter, then it is on a cycle. 
\item If $v$ is regular, there are finitely many proper or improper vertices $h_1,\ldots, h_l$ on cycles and paths $p_j, j=1,\ldots, l$ such that $v\cto^{p_j}h_j$ and such that for every prefix $p$ of $p_j$ the conditions (i), (ii) and (iii) of Proposition \ref{connecting} hold with $t_j=|p_j|.$ 
\end{enumerate}
\end{enumerate}
\label{all_comparable}
\end{theorem}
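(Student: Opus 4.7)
The plan is to establish the three chains (1) $\Leftrightarrow$ (2) $\Leftrightarrow$ (3), (2) $\Leftrightarrow$ (4), and (4) $\Leftrightarrow$ (5), using Theorem \ref{comparable} as the bridge between comparability and stationarity, and Proposition \ref{connecting} as the bridge between monoid-level relations and graph-theoretic paths. The first chain is routine: (1) $\Rightarrow$ (2) is trivial, (2) $\Leftrightarrow$ (3) is Theorem \ref{comparable} applied to each (nonzero) generator, and (2) $\Rightarrow$ (1) reduces to the observations that shifts preserve comparability ($x^m g$ is comparable iff $g$ is) and that sums of stationary elements are stationary: given witnesses $a_i \to x^{n_i} a_i + b_i$ for $i = 1, 2$, Lemma \ref{support_of_stationary}(1) lets me pass to a common period $n$ so that $a_1 + a_2 \to x^n(a_1 + a_2) + (c_1 + c_2)$.

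For (2) $\Rightarrow$ (4): condition (4a) holds because sinks admit no nontrivial $\to$-steps and are not on cycles (hence cannot be stationary by Lemma \ref{support_of_stationary}(2)), while any comparable $g$ has a stationary witness whose support contains a cycle generator (Lemma \ref{support_of_stationary}(2)) to which $g$ connects by Proposition \ref{connecting}. Condition (4b) is exactly Corollary \ref{infinite_emitter_corollary}(4),(5). The delicate part is (4c): given a regular comparable $g$, I would take any stationary $b$ with $g \to b$ (from Lemma \ref{comparable_connects_to_stationary}) and iteratively replace each exit-part monomial $x^m h$ --- noting $h$ is regular by (4a),(4b) applied to $h$ and comparable by hypothesis --- by $x^m c_h$ for a stationary $c_h$ with $h \to c_h$. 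Establishing termination so that the exit part eventually vanishes is the technical crux. Conversely, (4) $\Rightarrow$ (2) is straightforward: sinks are excluded by (4a), infinite emitters and improper generators lie on cycles by (4b) hence are stationary by Lemma \ref{stationary_support_cyclic} and therefore comparable, and regular generators are comparable via (4c).

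For (4) $\Leftrightarrow$ (5): the forward direction restricts (4a),(4b) to vertices, and for (5c) applies Proposition \ref{connecting} to the relation $v \to a$ guaranteed by (4c), where the lone zero-degree monomial $v$ forces $t_j = |p_j|$ while $a_e = 0$ places each $h_j$ on a cycle. For the converse, (5a) gives (4a) on vertices, and the $\Leftarrow$ direction of Proposition \ref{connecting} applied to the data in (5c) produces a stationary $a$ with $\supp(a) = \{h_1, \ldots, h_l\}$ entirely on cycles, supplying (4c) via Lemma \ref{stationary_support_cyclic}. The non-obvious piece is (5) $\Rightarrow$ (4b) for improper generators $q_Z^v$: here I plan to combine (5b), the infiniteness of $\so^{-1}(v)$ against the finite set $Z$, and Corollary \ref{infinite_emitter_corollary}(3) to locate a cycle through $v$ whose first edge avoids $Z$, possibly tightening by invoking (5c) on the regular vertices of such a cycle to rule out pathological structures.

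The main obstacle I anticipate is (2) $\Rightarrow$ (4c): a naive iterative replacement of exit generators can introduce new exits, so I will need a termination argument, likely a minimality principle combined with careful use of the Core Lemma \ref{core_lemma} to guarantee that some well-founded measure strictly decreases at each step. A secondary subtlety is (5) $\Rightarrow$ (4b) for improper generators, where (5) does not directly constrain improper generators and I must argue that the conditions on vertices nonetheless force each improper generator onto a cycle.
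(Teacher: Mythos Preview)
Your chain (1)$\Leftrightarrow$(2)$\Leftrightarrow$(3) and your handling of (4)$\Rightarrow$(2) and (4)$\Rightarrow$(5) match the paper closely; the paper runs the cycle (3)$\Rightarrow$(4)$\Rightarrow$(5)$\Rightarrow$(3) rather than two separate biconditionals, but this is cosmetic.

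On (2)$\Rightarrow$(4c): the paper avoids your iterative replacement by inducting on the minimum length $n$ of a path from the regular vertex $v$ to a cycle. Having established (4a) and (4b), any vertex not on a cycle is regular, so one applies (A1) and recurses on each $\ra(e)$. This is slicker than your proposal, but note that the paper's assertion that each such $\ra(e)$ not already on a cycle has minimum distance strictly less than $n$ is not true in general (e.g.\ $v$ may have one edge to a cycle and another edge to a long cycle-free tail), so the paper's induction as written also has a termination gap. Your instinct that termination is the crux is therefore correct; a clean fix for either approach seems to require the full strength of (3) --- the specific derivation $v\to a$ bounds the depth of the (A1)-expansion tree --- rather than merely (4a) and (4b).

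On (5)$\Rightarrow$(4b) for improper generators: you are right that this is the delicate step, and in fact your proposed argument cannot succeed, because the implication is false as stated. The paper asserts that if the infinite emitter $v$ is on a cycle (from (5b)) then $q_Z^v$ is on a cycle ``by Definition~\ref{definition_connecting}'', but that definition requires some cycle through $v$ to begin with an edge outside $Z$, which (5) does not force. A counterexample: let $v$ carry a single loop $f$ together with infinitely many edges $e_1,e_2,\ldots$ to a vertex $w$ carrying a single loop $g$, and no other edges. Then (5a)--(5c) all hold (the only regular vertex is $w$, which is on a cycle, so (5c) is trivial), yet $q_{\{f\}}^v$ is not on a cycle: every edge in $\so^{-1}(v)\setminus\{f\}$ lands at $w$, and $w$ does not reach $v$. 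By Corollary~\ref{infinite_emitter_corollary}(5), $q_{\{f\}}^v$ is then not comparable, so (1) fails while (5) holds. Your idea of playing $|\so^{-1}(v)|=\infty$ against $|Z|<\infty$ cannot help, since only one cycle passes through $v$ here. Condition (5b) would need to be strengthened (for instance to: for every infinite emitter $v$ and every finite $Z\subseteq\so^{-1}(v)$, some cycle through $v$ begins with an edge outside $Z$) for the equivalence (4)$\Leftrightarrow$(5) to go through.
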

\begin{proof}
The implication (1) $\Rightarrow$ (2) is direct and the implication (2) $\Rightarrow$ (1) holds since a finite sum of comparable elements is comparable. The equivalence (2) $\Leftrightarrow$ (3) follows directly from Theorem \ref{comparable}. To complete the proof, we show (3) $\Rightarrow$ (4) $\Rightarrow$ (5) $\Rightarrow$ (3). 

Assume that (3) holds and let $g$ be any generator. Let $a$ be stationary such that $g\to a.$ Since $g$ connects to all generators in the support of $a_c\neq 0,$ $g$ connects to a generator on a cycle so $g$ is not a sink and (a) holds. Part (b) holds by parts (4) and (5) of Corollary \ref{infinite_emitter_corollary}. To show part (c), let $g=v\in E^0$ be regular. We claim that there is an element $b\in F_E^\Gamma-\{0\}$ with support containing only vertices on cycles such that $v\to b$.
As $v\to a$ for a stationary element $a,$ $v$ cannot be on an infinite path containing no vertices on cycles by Proposition \ref{connecting} and Lemma \ref{support_of_stationary}. In addition, a path from $v$ to a cycle such that only the range of that path is in a cycle can contain an  infinite emitter only as its range. This shows that there are only finitely many paths $p$ originating at $v$ and terminating in a vertex of a cycle such that no other vertex of $p$ but $\ra(p)$ is in a cycle. Let $n_v$ be the maximal element of the set of lengths of such paths. We show the claim by induction on $n_v.$ If $n_v=0,$ $v$ is on a cycle and one can take $b=v.$ Assuming the induction hypothesis, consider a regular vertex $v$ with $n_v>0.$ For every $e\in \so^{-1}(v),$ either $\ra(e)$ is on a cycle, in which case we let $b_e=\ra(e)$ or $\ra(e)$ is not on a cycle in which case $\ra(e)$ is necessarily regular by parts (a) and (b) and with $n_{\ra(e)}<n_v.$ By the induction hypothesis, there is an element $b_e$ with the support consisting of vertices on cycles such that $\ra(e)\to b_e$. The element $b=\sum_{e\in  \so^{-1}(v)}xb_e$ has vertices in the support on cycles and 
\[v\to_1 \sum_{e\in  \so^{-1}(v)}x\ra(e)\to \sum_{e\in  \so^{-1}(v)}xb_e=b.\]
Since $\supp(b)$ consists of generators on cycles, $b$ is stationary by Lemma \ref{stationary_support_cyclic} and its exit part is zero. 

Assume that (4) holds and let $v$ be any vertex of $E$. Parts (5a) and (5b) directly hold by (4a) and (4b). If $v$ is regular, let $a$ be stationary with exit part zero such that $v\to a$ which exists by (4c). If $a=\sum_{j=1}^l x^{t_j}h_j,$ then $h_j$ are on cycles since the exit part of $a$ is zero. Part (5c) then follows from the relation $v\to a$ by Proposition \ref{connecting}.  

Assume that (5) holds and let $g$ be any generator. By (5a), $g$ is not a sink. If $g$ is an infinite emitter, then $g$ is on a cycle by (5b) and so it is stationary. If $g$ is an improper vertex and $g=q^v_Z,$ then 
$v$ is on a cycle by (5b) so $g$ is on a cycle by Definition \ref{definition_connecting} and, again $g$ is stationary. In both of these cases, (3) holds since $g\to g.$ If $g$ is a regular vertex, (5c) and  Proposition \ref{connecting} imply that $g\to a$ for $a=\sum_{j=1}^l x^{|p_j|}h_j.$ Since the elements $h_j$ are on cycles, $a$ is stationary by Lemma \ref{stationary_support_cyclic}. Hence, (3) holds. 
\end{proof}

Part (5) with any of the conditions (a), (b), or (c) deleted is not equivalent with the other conditions of Theorem \ref{all_comparable} as the next set of examples shows.
\begin{example}
\begin{enumerate}
\item If $E$ is the Toeplitz graph (see part (2) of Example \ref{example_stationary}), then (b) and (c) hold. There is a sink so (a) fails and the sink is not comparable. 

\item If $E$ is the graph below, then (a) and (c) hold. The infinite emitter $v$ is not on a cycle, so (b) fails and $v$ is not comparable by Corollary \ref{infinite_emitter_corollary}(4).  

$$\xymatrix{{\bullet}^{v} \ar@{.} @/_1pc/ [r] _{\mbox{ } } \ar@/_/ [r] \ar [r] \ar@/^/ [r] \ar@/^1pc/ [r] & {\bullet}^{w}\ar@(ru,rd)}$$

\item Let $E$ be the graph below. 

$$\xymatrix{   
\bullet  \ar@(ul,ur)   & \bullet \ar@(ul,ur)    & \bullet  \ar@(ul,ur) &   & \\   
\bullet_v \ar[r]\ar[u] & \bullet \ar[r]  \ar[u] & \bullet \ar[r]\ar[u] & \bullet \ar@{.>}[r] \ar@{.>}[u] &}$$
If $a$ is any element whose support consists only of vertices on cycles, then $v\to a$ fails since  
there is a path originating at $v$ which does not connect to $\supp(a)$ (analogous argument is used in part (3) of Example \ref{example_connecting}). 
The conditions (a) and (b) hold for $E$, but (c) fails and $v$ is not comparable. 
\end{enumerate}
\label{example_not_comparable}
\end{example}

\section{Characterizations of periodic, aperiodic and incomparable elements}\label{section_other_three}

Next, we show characterizations of periodic, aperiodic and incomparable elements as well as other properties discussed in the introduction. We start by Theorem \ref{periodic} which characterizes a nonzero periodic element of $F_E^\Gamma$. Theorem \ref{periodic} has already been used in \cite[Theorem 3.1]{Crossed_product} to characterize Leavitt path algebras which are crossed products in terms of the properties of the underlying graphs.

\begin{theorem} The following conditions are equivalent for an element $a\in F^\Gamma_E-\{0\}.$
\begin{enumerate}[\upshape(1)]
\item The element $a$ is periodic. 
\item There is an element $b$ whose support consists of vertices on cycles without exits such that $a\sto b.$
\item Any path originating at a generator in the support of $a$ is a prefix of a path $p$ ending in one of finitely many cycles with no exits and such that all vertices of $p$ are regular. Every infinite path originating at a vertex in the support of $a$ ends in a cycle with no exits.
\end{enumerate}
\label{periodic}
\end{theorem}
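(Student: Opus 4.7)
The plan is to establish the cycle $(1)\Rightarrow(2)\Rightarrow(3)\Rightarrow(1)$.

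For $(1)\Rightarrow(2)$, let $a$ be periodic, so $a\sim x^na$. Lemma~\ref{lemma_three_claims} forces $\supp(a)$ to consist of regular proper vertices, and Lemma~\ref{comparable_connects_to_stationary} applied to the identity $a\sim x^na+0$ produces some $c\in F_E^\Gamma-\{0\}$ with $c\to x^nc$ and $a\to c$; Corollary~\ref{stationary_periodic} then tells us $\supp(c)$ consists of regular vertices on cycles with no exits. The key observation is that the sequence $a\to c$ can use only axiom (A1). Indeed, an (A2)-step inserts an improper vertex $q_Z^v$ into the intermediate support, (A3) merely replaces $q_Z^v$ by some $q_W^v$ with $W\supsetneq Z$ (still improper), and no axiom deletes an improper vertex from the support; so any use of (A2) would force an improper vertex to survive into $\supp(c)$, contradicting what was just established, and the same reasoning rules out (A3). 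Hence $a\sto c$, and taking $b=c$ proves~(2).

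For $(2)\Rightarrow(3)$, suppose $a\sto b$ via a sequence $a=a_0\to_1\cdots\to_1 a_N=b$ with $\supp(b)$ on no-exit cycles (automatically regular). Let $R:=\bigcup_{l=0}^N\supp(a_l)$, which is finite, and classify each $v\in R$ as either transformed (some (A1)-step is applied to a monomial with generator $v$) or kept (in which case $v\in\supp(b)$, so $v$ lies on a no-exit cycle). Every transformed $v$ is regular, since (A1) only applies to regular vertices, and $\ra(e)\in R$ for every $e\in\so^{-1}(v)$. A persistence argument like that of $(1)\Rightarrow(2)$ additionally shows $\supp(a)$ itself contains no sinks, infinite emitters, or improper vertices. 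The crucial observation is that the subgraph of $E$ restricted to the transformed vertices is acyclic: a cycle $v_1\to v_2\to\cdots\to v_m\to v_1$ of transformed vertices would, upon successive (A1)-applications around the cycle, create a new monomial of $v_1$ with a strictly higher $x$-power, and iterating forces $v_1$ to be transformed infinitely many times, contradicting the finiteness of the reduction sequence. Consequently, from any transformed vertex every forward path reaches $\supp(b)$ within finitely many steps, and once in $\supp(b)$ the no-exit condition traps the path on its cycle. This simultaneously yields both clauses of~(3): any finite path $p'$ from $g\in\supp(a)$ extends through this DAG to $\supp(b)$ and then along the no-exit cycle (the finitely many cycles being those through $\supp(b)$), and any infinite path from $g$ enters $\supp(b)$ within finitely many steps and then stays on the no-exit cycle.

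For $(3)\Rightarrow(1)$, I will construct $b$ with $\supp(b)$ on the no-exit cycles of~(3) and $a\sto b$; Lemma~\ref{stationary_support_cyclic}(2) then gives $b\sim x^nb$ (a stationary element supported on exit-free cycles is periodic), so $a\sim b\sim x^nb\sim x^na$, settling~(1). By~(3) every $g\in\supp(a)$ is regular, as $g$ is not a sink and appears on an extended path whose vertices are all regular. The algorithm is to apply (A1) greedily, replacing each monomial $x^mv$ whose vertex $v$ is not yet on a no-exit cycle by $\sum_{e\in\so^{-1}(v)}x^{m+1}\ra(e)$. Termination is a K\"onig's lemma argument: for each $g\in\supp(a)$, the tree $T_g$ of finite paths from $g$ whose vertices all avoid the no-exit cycles of~(3) is finitely branching (each internal vertex is regular, hence has finite out-degree) and has no infinite branch (by the infinite-path clause of~(3)), so $T_g$ is finite. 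The uniform depth bound over the finitely many $g\in\supp(a)$ then guarantees termination of the greedy procedure. I anticipate the main technical hurdle will be the acyclicity argument in $(2)\Rightarrow(3)$, which requires careful tracking of how (A1)-steps create and consume monomials around a hypothetical cycle of transformed vertices; the K\"onig step in $(3)\Rightarrow(1)$ is the secondary concern, but the finite out-degree and the explicit exclusion of infinite escape paths by~(3) make it routine.
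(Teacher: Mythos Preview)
Your cycle $(1)\Rightarrow(2)\Rightarrow(3)\Rightarrow(1)$ matches the paper's, and your $(1)\Rightarrow(2)$ and $(3)\Rightarrow(1)$ are correct; the K\"onig argument for the latter is a clean alternative to the paper's explicit construction via Corollary~\ref{strong_connecting}. The paper handles $(2)\Rightarrow(3)$ differently, unpacking $a\to b$ through Proposition~\ref{connecting} into explicit connecting paths $p_{ij}$ and inducting on path length, whereas you work directly with the reduction sequence.

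There is, however, a genuine gap in your $(2)\Rightarrow(3)$: the acyclicity claim for the subgraph on \emph{transformed} vertices is false as stated, and your justification conflates the fixed sequence $a_0\to_1\cdots\to_1 a_N$ with hypothetical further (A1)-applications. A vertex on a no-exit cycle can perfectly well be transformed (applying (A1) just steps along the cycle), so all vertices of such a cycle may be ``transformed,'' giving a cycle in your subgraph; and knowing each $v_i$ is transformed \emph{somewhere} in the sequence does not force the particular $v_{i+1}$-monomial created from $v_i$ to itself be transformed later. The fix is to classify by membership in $\supp(b)$ instead: for $v\in R':=R\setminus\supp(b)$ every monomial with $v$ must be transformed (otherwise it survives to $b$), so $v$ is regular with $\ra(\so^{-1}(v))\subseteq R$. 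Now a cycle $v_1,\ldots,v_m$ inside $R'$ is impossible: take the \emph{last} step at which any $v_i$-monomial is transformed; this step creates a new $v_{i+1}$-monomial which, since $v_{i+1}\notin\supp(b)$, must be transformed at a strictly later step, a contradiction. With $R'$ acyclic and every edge out of $R'$ landing in $R$, your conclusion follows as you intended.
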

\begin{proof}
If (1) holds, then $a$ is comparable so $a\to b$ for some stationary element $b$ by Lemma \ref{comparable_connects_to_stationary}. The relation $a\to b$ implies $a\sim b$ so $b$ is periodic as well. Hence, the supports of both $a$ and $b$ consists of regular vertices only by Lemma \ref{lemma_three_claims}. Thus, $a\sto b.$ By Corollary \ref{stationary_periodic}, the support of $b$ consists of regular vertices on cycles without exits which shows (2).

If (2) holds, the element $b$ as in (2) is stationary and periodic by Lemma \ref{stationary_support_cyclic}. Since the core cycles of $b$ do not have exits, each generator in $\supp(b)$ is proper, emits exactly one edge and, hence, it is regular. As $a\sto b,$ any element of $\supp(a)$ is proper and regular also. Let $a=\sum_{i=1}^k x^{m_i}v_i,$ $b=\sum_{j=1}^l x^{t_j}w_j,$ and $I_i$ and $p_{ij}$ be as in Proposition \ref{connecting} for $a\to b.$ Since only (A1) is used, each vertex of any path $p_{ij}$ is regular. 

If $p$ is a path with $\so(p)=v_i,$ we use induction on $|p|$ to show that there is a path $q$ such that $p$ is a prefix of $q,$ $q$ ends in one of the core cycles and all vertices of $q$ are on some $p_{ij}$ for $j\in I_i$ (thus regular) or on cycles without exits (thus also regular). 
If $p=v_i,$ $q$ can be taken to be $p_{ij}$ for any $j\in I_i.$ Assuming that the claim holds for $p,$ let us consider $pe$ for some edge $e.$ By the induction hypothesis, all vertices of $p$ are regular, on $p_{ij}$ for some $j\in I_i$ or on a core cycle. If $\ra(e)$ is on a core cycle, then it emits exactly one edge so it is regular and we can take $q$ to be $pe.$ So, let us consider the case that $\ra(e)$ is not on a core cycle in which case  $\ra(p)$ is not on a core cycle also and so $\ra(p)$ is on $p_{ij}$ for some $j\in I_i.$ 
Since $\ra(p)$ is not on a cycle, there is a proper prefix $r$ of $p_{ij}$ which ends in $\ra(p).$ Thus $P_r\neq\emptyset$ and so $P_r=\so^{-1}(\ra(p))$ by part (2)(i) of Proposition \ref{connecting}. In particular, $e\in P_r.$ Hence, there is $j'\in I_i$ such that $e$ is in $p_{ij'}.$ Let $q$ be $pe$ up to $\ra(e)$ and the suffix of $p_{ij'}$ after $pe.$ Thus, $pe$ is a prefix of $q,$ $q$ ends in a core cycle and each vertex of $q$ is on $p_{ij}$ for some $j\in I_i.$   

It remains to show the condition on the infinite path. Let $e_1e_2\ldots$ be an infinite path originating at $v_i.$ For any $n,$ each vertex of the path $e_1e_2\ldots e_n$ is on $p_{ij}$ for some $j\in I_i$ or in a core cycle. Let $n$ be strictly larger than the length of $p_{ij}$ for all $j\in I_i.$ Then $\ra(e_n)$ must be in a core cycle and so $e_ne_{n+1}\ldots$ is on that same cycle since the cycle has no exits. This shows that (3) holds.  

If condition (3) holds, then the support of $a$ consists of regular vertices such that every path they emit connects to finitely many cycles without exits by paths which contain regular vertices only. Let $\supp(a)=\{v_1,\ldots, v_k\}$ and let $n_i$ be the number of paths $p$ from $v_i$ to the finitely many cycles from condition (3) such that no vertex of any of the paths from (3) is on the cycle except the range of $p.$ Index the paths  originating at $v_i$ as $p_{i1},\ldots, p_{in_i}$ for some positive $n_i$ and let $w_{ij}=\ra(p_{ij}).$ Let $J$ be the set of  $(i, j)$ with $i=1,\ldots, k$ and $j=1,\ldots, n_i$ and let $I_i$ be the set of those $(i',j)\in J$ such that $i'=i.$ By construction, $\{I_1,\ldots, I_k\}$ is a partition of $J$ and, by considering a bijection between $J$ and the set $\{1,\ldots, l\}$ for $l=|J|,$ this partition corresponds to a partition of  $\{1,\ldots, l\}.$ 

If $p$ is a prefix of $p_{ij},$ let us use the notation $P_p$ in the same sense as in Proposition \ref{connecting} and Definition \ref{definition_connecting_support}. If $p$ is a proper prefix of $p_{ij},$ then $\ra(p)$ is regular and $P_p$ is nonempty as it contains the first edge of $p_{ij}$ not on $p.$ If $e\in \so^{-1}(\ra(p)),$ then $pe$ is a prefix of some $p_{ij'}$ by condition (3) and so $e\in P_p.$ Hence, $P_p=\so^{-1}(\ra(p)).$ If $p=p_{ij},$ then $P_p$ is empty by construction. Thus condition (i) of Definition \ref{definition_connecting_support} holds and conditions (ii) and (iii) are trivially satisfied. So, for $W=\{w_{ij}\mid  (i,j)\in J\},$ $\supp(a)\to W.$ Moreover, $\supp(a)\sto W$ since $\supp(a)$ and $W$ contain regular vertices only. Hence, there is $c\neq 0$ such that $a\sto c$ and $\supp(c)\subseteq W$ by Corollary \ref{strong_connecting}. The set $W$ is stationary and, by part (1) of Lemma \ref{stationary_support_cyclic}, every element with support contained in $W$ is stationary and, part (2) of Lemma \ref{stationary_support_cyclic}, periodic. Thus, $c$ is periodic. Since $a\sim c,$ $a$ is also periodic. Hence, (1) holds.   
\end{proof}

We note that the sources of graphs in parts (1) and (3) of Example \ref{example_not_comparable} are such that condition (2) fails, so that these vertices are not periodic by Theorem \ref{periodic} (and incomparable by Theorem \ref{comparable}). 

In Theorem \ref{all_periodic}, we characterize when every element of $F_E^\Gamma$ is periodic in terms of the properties of $E$, in terms of the form of the Leavitt path algebra, as well as in terms of the form of the Grothendieck $\Gamma$-group. 

\begin{theorem}
The following conditions are equivalent.  
\begin{enumerate}[\upshape(1)]
\item Every element $a\in F_E^\Gamma$ is periodic.
\item Every vertex is periodic. 
\item For every vertex $v,$ $\{v\}\sto V$ for some stationary set $V$ which contains core vertices only and every core cycle has no exits.
\item Each path is a prefix of a path $p$ ending in a cycle with no exits and such that the vertices on $p$ are regular. Every infinite path ends in a cycle with no exits. 
\item $E$ is a row-finite, no-exit graph without sinks such that every infinite path ends in a cycle. 
\item For any field $K$, the Leavitt path algebra $L_K(E)$ is graded isomorphic to an algebra of the form 
\[\bigoplus_{i\in I}\M_{\mu_i}(K[x^{n_i}, x^{-n_i}])(\ol\gamma_i)\]
where $I$ is a set, $\mu_i$ are cardinals, $n_i$ positive integers, and $\ol\gamma_i$ maps $\mu_i\to \Zset$ for $i\in I.$  
\item The graph $\Gamma$-monoid is isomorphic to  
\[\bigoplus_{i\in I} \Zset^+[x]/\langle x^{n_i}=1\rangle\]
where $I$ is a set and $n_i$ are positive integers for $i\in I$.
\item The  Grothendieck $\Gamma$-group $G_E^\Gamma$ is isomorphic to  
\[\bigoplus_{i\in I} \Zset[x]/\langle x^{n_i}=1\rangle\]
where $I$ is a set and $n_i$ are positive integers for $i\in I$.
\end{enumerate}
\label{all_periodic}
\end{theorem}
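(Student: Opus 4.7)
The plan is to close the cycle $(1) \Rightarrow (2) \Rightarrow (3) \Rightarrow (4) \Rightarrow (5) \Rightarrow (6) \Rightarrow (7) \Rightarrow (8) \Rightarrow (1)$, applying Theorem \ref{periodic} and Corollary \ref{strong_connecting} for the monoid--graph equivalences among $(1)$--$(5)$, a structural decomposition of Leavitt path algebras of row-finite no-exit graphs for $(5) \Leftrightarrow (6)$, and a direct computation of graded $K$-theory for $(6) \Rightarrow (7) \Rightarrow (8) \Rightarrow (1)$.

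The implication $(1) \Rightarrow (2)$ is immediate. For $(2) \Rightarrow (3)$, I would apply Theorem \ref{periodic}(2) to each vertex $v$ to produce $b$ with $v \sto b$ whose support consists of vertices on no-exit cycles; Corollary \ref{strong_connecting}(1) then yields $\{v\} \sto \supp(b)$, and $\supp(b)$ is the required stationary set. For $(3) \Rightarrow (4)$, Corollary \ref{strong_connecting}(2) lifts $\{v\} \sto V$ to an element $c$ with $v \sto c$ and $\supp(c) \subseteq V$, and Theorem \ref{periodic}(3) applied to $a = v$ yields (4). For $(4) \Rightarrow (5)$: no sinks, since every vertex extends to a non-trivial path; row-finite, since every vertex appears as a source of some path $p$ in (4) and all vertices of such $p$ are regular; no-exit, since for any cycle $c$ the infinite path $cc\ldots$ must end in a no-exit cycle by (4), forcing $c$ itself to have no exits; and every infinite path ends in a cycle by (4). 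For $(5) \Rightarrow (1)$, given any $a \in F_E^\Gamma$, row-finiteness and absence of sinks force $\supp(a)$ to consist of regular vertices, and the remaining conditions in (5) translate to exactly Theorem \ref{periodic}(3) for $a$.

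For $(5) \Leftrightarrow (6)$, I would invoke the structural decomposition for row-finite no-exit graphs without sinks where every infinite path ends in a cycle: each connected component of $E$ contains exactly one cycle $c_i$ of length $n_i$ together with a (possibly infinite) tree of vertices terminating at $c_i$, and the corresponding summand of $L_K(E)$ is graded isomorphic to $\M_{\mu_i}(K[x^{n_i}, x^{-n_i}])(\ol\gamma_i)$, with $\mu_i$ the cardinal enumerating paths from the tree to a fixed base vertex on $c_i$ and $\ol\gamma_i$ recording their lengths. The converse is read off from the generators of such a matrix algebra. For $(6) \Rightarrow (7)$, Morita invariance gives $\V^\Gamma(\M_{\mu_i}(K[x^{n_i}, x^{-n_i}])(\ol\gamma_i)) \cong \V^\Gamma(K[x^{n_i}, x^{-n_i}]) \cong \Zset^+[x]/\langle x^{n_i}=1\rangle$, since finitely generated graded projective modules over $K[x^n, x^{-n}]$ are free and two shifts agree iff their degrees are congruent modulo $n$; direct sums of algebras pass to direct sums of $\V^\Gamma$. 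The equivalence $(7) \Leftrightarrow (8)$ is group completion. Finally, $(8) \Rightarrow (1)$ follows because any element of $\bigoplus_i \Zset[x]/\langle x^{n_i}=1\rangle$ has finite support and is fixed by $x^N$ for $N$ a common multiple of the involved $n_i$, so by the identification of $M_E^\Gamma$ with the positive cone of $G_E^\Gamma$ (Proposition \ref{cancellative}) every element of $F_E^\Gamma$ is periodic.

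The main obstacle is the structural decomposition used in $(5) \Rightarrow (6)$. For finite graphs this is classical, but the general case demands careful handling of potentially infinite cardinals $\mu_i$, explicit specification of the shift $\ol\gamma_i$, and verification that an infinite direct sum of matrix algebras of possibly infinite rank is indeed graded isomorphic to $L_K(E)$; I would construct the isomorphism on generators of $L_K(E)$ by sending a path reaching $c_i$ to a matrix unit whose row/column indices and shift are determined by the path and its target on $c_i$. Every other step reduces to a direct consequence of Theorem \ref{periodic}, Corollary \ref{strong_connecting}, or a routine Morita-type manipulation of graded projective modules.
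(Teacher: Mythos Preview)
Your proof follows the same cycle of implications as the paper and the monoid--graph steps $(1)$--$(5)$ and $(7)$--$(8)$--$(1)$ match essentially verbatim.

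One correction in your sketch for $(5) \Rightarrow (6)$: the decomposition is not by connected components of $E$. A single connected component can contain several disjoint no-exit cycles sharing a common feeder vertex (for instance, a vertex $v$ emitting one edge to each of two vertices with loops satisfies (5) but is connected with two cycles). The correct indexing set $I$ is the set of cycles; for each cycle $c_i$ the summand is built from all paths in $E$ whose range lies on $c_i$ and which do not contain the whole cycle, and the (CK2) relation at a vertex feeding multiple cycles splits its idempotent among the corresponding summands. With that adjustment your construction is the right one. The paper itself does not carry out this construction but cites \cite[Corollary~3.6]{Lia_no-exit} for the equivalence $(5) \Leftrightarrow (6)$.
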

\begin{proof}
The implication (1) $\Rightarrow$ (2) is direct. If (2) holds, then every vertex of $E$ is regular by Lemma \ref{lemma_three_claims}. If a vertex $v$ is periodic,  $v\to a$ for some stationary element $a$ by Lemma \ref{comparable_connects_to_stationary}. Since $v$ is periodic, $a$ is periodic also and  the support of $a$ consists of regular vertices on cycles without exits by Corollary \ref{stationary_periodic}. Thus, $v\to a$ implies that $v\sto a.$ If $V=\supp(a),$ condition (3) follows by Corollary \ref{strong_connecting}. 

If (3) holds, then all vertices of $E$ are regular. If $p$ is any finite or infinite path, $\{\so(p)\}\sto V$ for some $V$ as in condition (3). By Corollary \ref{strong_connecting}, there is $a\in F_E^\Gamma-\{0\}$ such that $\so(p)\sto a$ and $\supp(a)\subseteq V.$ Since $V$ consists of vertices on cycles without exits, $a$ is stationary and periodic and so $\so(p)$ is periodic also. Then (4) holds by Theorem \ref{periodic}.

If (4) holds, then all vertices of $E$ are regular so $E$ is a row-finite graph. Every vertex connects to cycles so there are no sinks. Every infinite path ends in a cycle and no cycle has an exit. So, (5) holds.

Conditions (5) and (6) are equivalent by \cite[Corollary 3.6]{Lia_no-exit}.

The implications (6) $\Rightarrow$ (7) and (7) $\Rightarrow$ (8) are rather direct. Condition (8) directly implies that every element of $G_E^\Gamma$ has a finite orbit. Hence, every element of $F_E^\Gamma$ is periodic and (1) holds. 
\end{proof}

Using Theorem \ref{periodic}, we characterize when no nonzero element of $F_E^\Gamma$ is periodic. 

\begin{corollary}
The following conditions are equivalent.  
\begin{enumerate}[\upshape(1)]
\item No nonzero element of $F_E^\Gamma$ is periodic.
\item The graph $E$ satisfies Condition (L). 
\end{enumerate}
\label{no_periodic} 
\end{corollary}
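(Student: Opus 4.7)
The plan is to prove both directions by contrapositive, exploiting Theorem \ref{periodic} which already does most of the work.

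For the direction $\lnot(2) \Rightarrow \lnot(1)$, suppose $E$ contains a cycle $c = e_1 e_2 \cdots e_n$ with no exits, and let $v = \so(e_1)$. Because no vertex of $c$ emits any edge other than its edge on $c$, each vertex on $c$ is regular and emits exactly one edge. Applying (A1) repeatedly along $c$ starting from $v$ gives
\[
v \to_1 x\,\ra(e_1) = x\,\so(e_2) \to_1 x^2\,\so(e_3) \to_1 \cdots \to_1 x^n\,\ra(e_n) = x^n v,
\]
so $v \sim x^n v$ with $v \neq 0$, exhibiting a nonzero periodic element.

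For the direction $\lnot(1) \Rightarrow \lnot(2)$, suppose some nonzero $a \in F_E^\Gamma$ is periodic. By the implication (1) $\Rightarrow$ (2) of Theorem \ref{periodic}, there exists $b \in F_E^\Gamma$ with $a \sto b$ whose support consists of vertices lying on cycles without exits. Since $a$ is nonzero and each application of the axiom (A1) replaces a monomial $\gamma v$ (with $v$ a regular vertex) by the nonzero element $\sum_{e \in \so^{-1}(v)} \gamma w(e)\ra(e)$, the element $b$ is also nonzero. Hence the support of $b$ is nonempty, so $E$ contains at least one cycle with no exits; that is, Condition (L) fails.

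The main potential pitfall lies in the second direction: one must observe that within the relation $\sto$ (which only uses (A1)), nonzero elements never reduce to the empty sum, so the stationary element produced by Theorem \ref{periodic} genuinely witnesses an exit-free cycle. Once this observation is made, the argument is immediate and no further case analysis on infinite emitters or improper vertices is required, since Lemma \ref{lemma_three_claims} combined with the fact that $a$ is periodic already rules those out in the hypothesis of Theorem \ref{periodic}.
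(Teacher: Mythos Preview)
Your proof is correct and follows essentially the same approach as the paper's: for $\lnot(2)\Rightarrow\lnot(1)$ you exhibit a vertex on an exit-free cycle as a periodic element (the paper simply asserts ``any vertex on this cycle is periodic''), and for $\lnot(1)\Rightarrow\lnot(2)$ you invoke Theorem~\ref{periodic} to produce an element $b$ supported on exit-free cycles, which is exactly what the paper does (phrased directly rather than contrapositively). Your explicit verification that $b\neq 0$ is a welcome clarification that the paper leaves implicit.
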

\begin{proof}
If $E$ has a cycle with no exits, any vertex on this cycle is periodic. Conversely, if Condition (L) holds, the core cycles of any stationary element have exits. By Theorem \ref{periodic}, no nonzero element is periodic. 
\end{proof}

We characterize aperiodic elements next. 

\begin{theorem}
The following conditions are equivalent for an element $a\in F^\Gamma_E.$
\begin{enumerate}[\upshape(1)]
\item The element $a$ is aperiodic.
\item The element $a$ is comparable and not periodic. 
\item There is a stationary element $b$ such that $a\to b$ and at least one of the core cycles of $b$ has an exit.  
\end{enumerate}
\label{aperiodic}
\end{theorem}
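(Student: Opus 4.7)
The plan is to derive (1)$\Leftrightarrow$(2) directly from the definition of aperiodicity and the observation recorded after Definition \ref{definition_of_comparable}, and to prove (2)$\Leftrightarrow$(3) by harnessing the stationary-element machinery already in place. For (1)$\Leftrightarrow$(2): if $a$ is aperiodic then $a\succ x^na$ for some positive $n$ gives $a\succsim x^na$, hence comparability, while the observation following Definition \ref{definition_of_comparable} rules out simultaneous periodicity. Conversely, if $a$ is comparable and not periodic, then $a\succsim x^na$ for some positive $n$, and since $a\nsim x^na$ for every positive $n$, in particular for this one, we conclude $a\succ x^na$, giving aperiodicity.

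For (2)$\Rightarrow$(3), comparability supplies $a\sim x^na+c$ for some $c\in F_E^\Gamma$ and positive $n$, so Lemma \ref{comparable_connects_to_stationary} yields a stationary $b$ with $a\to b$. Since $a\sim b$ and $a$ is not periodic, $b$ is not periodic either. I will argue that a stationary element which is not periodic must have some core cycle with an exit. Write $b=b_c+b_e$ as in the notation introduced after Lemma \ref{support_of_stationary}. If $b_e\neq 0$, pick any exit generator $g\in\supp(b_e)$; by Lemma \ref{support_of_stationary}(3), $g$ lies on a path of positive length starting at some core generator $h$ on a core cycle $c'$, and since $g\notin V_c$ this path must leave $c'$ at some vertex, providing an exit of $c'$. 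If $b_e=0$, then $b=b_c$ and Corollary \ref{stationary_periodic} combined with the non-periodicity of $b$ forces at least one core cycle to have an exit.

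For (3)$\Rightarrow$(2), the relation $a\to b$ implies $a\sim b$; since $b$ is stationary, Theorem \ref{comparable} ensures that $b$, and hence $a$, is nonzero and comparable. To rule out periodicity, I show $b$ is not periodic: by Corollary \ref{stationary_periodic}, a periodic stationary element has support consisting of regular vertices on cycles without exits, and each such vertex lies on a \emph{unique} cycle, since two distinct cycles through a common vertex would force that vertex to emit at least two edges, giving each cycle an exit. Consequently, if $b$ were periodic, every core cycle of $b$ would be exit-free, contradicting the hypothesis. Thus $b$ is not periodic and, via $a\sim b$, neither is $a$. The main obstacle is the equivalence ``$b$ stationary is not periodic if and only if some core cycle of $b$ has an exit'', but this is essentially a repackaging of Corollary \ref{stationary_periodic} together with Lemma \ref{support_of_stationary}, so once that machinery is in hand the argument reduces to careful bookkeeping.
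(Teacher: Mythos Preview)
Your proof is correct and follows essentially the same route as the paper, which disposes of (2)$\Leftrightarrow$(3) in one line by citing Theorems~\ref{comparable} and~\ref{periodic}; you simply unpack that citation, working instead with the more primitive Lemma~\ref{support_of_stationary} and Corollary~\ref{stationary_periodic} to make explicit the passage between ``$b$ is not periodic'' and ``some core cycle of $b$ has an exit''. The case split on $b_e$ in (2)$\Rightarrow$(3) and the uniqueness-of-cycle observation in (3)$\Rightarrow$(2) are exactly the details hidden behind the paper's terse appeal to Theorem~\ref{periodic}, so there is no genuine difference in strategy.
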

\begin{proof}
It is direct that (1) $\Leftrightarrow$ (2). The equivalence (2) $\Leftrightarrow$ (3) holds by Theorems \ref{comparable} and \ref{periodic}. 
\end{proof}

We also characterize when every element of $F_E^\Gamma$ is aperiodic.

\begin{theorem}
The following conditions are equivalent.  
\begin{enumerate}[\upshape(1)]
\item Every nonzero element $a\in F_E^\Gamma$ is aperiodic. 
\item Every generator of $F_E^\Gamma$ is aperiodic.  
\item Every generator of $F_E^\Gamma$ is comparable and every cycle has an exit. 
\item For every generator $g$ of $F_E^\Gamma,$ $g\to a$ for some stationary element $a$ such that all core cycles have exits. 
\end{enumerate}
\label{all_aperiodic}
\end{theorem}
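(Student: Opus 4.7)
My plan is to prove the equivalence by the cycle \text{(1)} $\Rightarrow$ \text{(2)} $\Rightarrow$ \text{(3)} $\Rightarrow$ \text{(4)} $\Rightarrow$ \text{(1)}. The implication \text{(1)} $\Rightarrow$ \text{(2)} is immediate since every generator is an element of $F_E^\Gamma$. For \text{(2)} $\Rightarrow$ \text{(3)}, note that any aperiodic element is comparable by definition, so it remains to show every cycle has an exit. If there were a cycle $c$ with no exits, take any vertex $v$ on $c$; by Lemma \ref{stationary_support_cyclic} applied to the singleton support $\{v\}$, we obtain $v \to x^{|c|}v + b$ with $b=0$ (since the unique core cycle $c$ has no exit), so $v \sim x^{|c|}v$ and $v$ would be periodic. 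This contradicts that $v$ is aperiodic (and thus not periodic by the uniqueness remark following Definition \ref{definition_of_comparable}).

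For \text{(3)} $\Rightarrow$ \text{(4)}, I simply invoke Theorem \ref{all_comparable}: since every generator is comparable, condition \text{(3)} of that theorem gives, for every generator $g$, a stationary element $a$ with $g\to a$. Because \emph{every} cycle in $E$ has an exit by \text{(3)}, in particular every core cycle of $a$ has an exit, so \text{(4)} holds.

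The implication \text{(4)} $\Rightarrow$ \text{(1)} is the heart of the theorem. Given a nonzero $a\in F_E^\Gamma$ with normal representation $a=\sum_{i=1}^k x^{m_i}g_i$, I apply \text{(4)} to each $g_i$ to obtain a stationary element $b_i$ with $g_i\to b_i$ whose core cycles all have exits. By Corollary \ref{stationary_periodic} each $b_i$ is not periodic, so together with stationarity, $b_i \succ x^{n_i}b_i$ for some positive $n_i$, i.e.\ $b_i\to x^{n_i}b_i+c_i$ with $c_i\neq 0$. Using Lemma \ref{support_of_stationary}(1), I can replace $n_i$ by any positive multiple while retaining $c_i\neq 0$; setting $n=\operatorname{lcm}(n_1,\ldots,n_k)$ I obtain $b_i\to x^{n}b_i+c_i'$ with $c_i'\neq 0$ for every $i$. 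Summing the relations $x^{m_i}g_i\to x^{m_i}b_i\to x^{m_i+n}b_i+x^{m_i}c_i'$ produces $a\to b\to x^n b+c$ where $b=\sum_i x^{m_i}b_i$ and $c=\sum_i x^{m_i}c_i'\neq 0$, so $a\sim b\succ x^n b\sim x^n a$, showing that $a$ is aperiodic.

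The main obstacle is the last step: ensuring that the witnessing positive integer $n$ can be taken uniformly across all summands while preserving a nonzero ``remainder''. The tool that makes this work is Lemma \ref{support_of_stationary}(1), which lets me iterate the stationary relation and pass to a common multiple of the individual periods without losing the nonzero change term; coupled with cancellativity (Proposition \ref{cancellative}) this guarantees that the aggregate remainder $c$ is genuinely nonzero, giving strict aperiodicity rather than merely $\succsim$.
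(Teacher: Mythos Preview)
Your proof is correct and follows essentially the same route as the paper. The paper organizes the implications as $(1)\Leftrightarrow(2)\Rightarrow(3)\Rightarrow(4)\Rightarrow(2)$, while you close the cycle via $(4)\Rightarrow(1)$ directly; your $(4)\Rightarrow(1)$ is exactly the paper's $(4)\Rightarrow(2)$ followed by its $(2)\Rightarrow(1)$, the latter being the (there unproved) assertion that a finite sum of aperiodic elements is aperiodic --- which your lcm argument via Lemma~\ref{support_of_stationary}(1) spells out in detail. One small remark: the appeal to cancellativity at the end is unnecessary, since $c=\sum_i x^{m_i}c_i'$ is a sum of nonzero elements in the \emph{free} $\Gamma$-monoid $F_E^\Gamma$ and is therefore trivially nonzero.
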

\begin{proof}
The implication (1) $\Rightarrow$ (2) is direct. The converse holds since a sum of aperiodic elements is comparable and, if at least one of them is aperiodic, aperiodic. 

If (2) holds and $g$ is a generator on an arbitrary cycle (which exists by Corollary \ref{exists_comparable}), then $g$ is aperiodic if and only if the cycle has an exit by Lemma \ref{stationary_support_cyclic}. Hence, (3) holds. 

If (3) holds and $g$ is an arbitrary generator, then $g\to a$ for a stationary element $a.$ By assumption (3) all core cycle of $a$ have exits so (4) holds. 

Finally, let us assume that (4) holds and show (2). If $g$ is an arbitrary generator and $a$ a stationary element such that $g\to a$ and all core cycles have exit, then $a\to x^na+b$ for some nonzero $b$. Hence, $a$ is aperiodic and, since $g\to a,$ $g$ is aperiodic also.   
\end{proof}

We also characterize when no element of $F_E^\Gamma$ is aperiodic.   

\begin{corollary}
The following conditions are equivalent.  
\begin{enumerate}[\upshape(1)]
\item No element of $F_E^\Gamma$ is aperiodic.
\item The graph $E$ is no-exit (i.e. satisfies Condition (NE)). 
\end{enumerate}
\label{no_aperiodic} 
\end{corollary}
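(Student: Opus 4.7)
The plan is to derive both directions almost immediately from Theorem \ref{aperiodic} together with Lemma \ref{stationary_support_cyclic}. The content of the statement is that Condition (NE) controls exactly the existence of the ``extra change'' $b$ in a stationary relation $a\to x^na+b$, so the characterization reduces to bookkeeping about exits on core cycles.

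For the direction $(2)\Rightarrow(1)$, I would argue directly. Assume that $E$ satisfies Condition (NE): every vertex on any cycle emits exactly one edge. In particular, no infinite emitter lies on a cycle and no cycle has an exit. Suppose, for contradiction, that some $a\in F_E^\Gamma$ is aperiodic. By Theorem \ref{aperiodic}, there is a stationary element $b$ with $a\to b$ such that at least one core cycle of $b$ has an exit. Since the core cycles of $b$ are cycles in $E$, this contradicts (NE). Hence no element of $F_E^\Gamma$ is aperiodic.

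For the direction $(1)\Rightarrow(2)$, I would argue by contrapositive and produce an explicit aperiodic generator. Suppose (NE) fails; then there is a vertex $v$ lying on some cycle such that $v$ emits more than one edge (this includes the case where $v$ is an infinite emitter on a cycle). Any cycle through $v$ uses exactly one of $v$'s outgoing edges, so every remaining outgoing edge from $v$ is an exit from any cycle through $v$; in particular, a shortest cycle $c_v$ through $v$ has an exit. Now take $a=v$: its support $\{v\}$ consists of a single core generator with core period $n_v=|c_v|$. By Lemma \ref{stationary_support_cyclic}(1), $v$ is stationary and $v\to x^{n_v}v+b$ for some $b\in F_E^\Gamma$. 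By Lemma \ref{stationary_support_cyclic}(2), since the core cycle $c_v$ has an exit, $v$ is not periodic; consequently $b\ne 0$. The relation $v\sim x^{n_v}v+b$ with $b\ne 0$ yields $v\succ x^{n_v}v$, so $v$ is aperiodic, contradicting (1).

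There is essentially no serious obstacle here, since the heavy lifting has been done in Theorems \ref{comparable} and \ref{aperiodic} and in Lemma \ref{stationary_support_cyclic}; the only care required is in the contrapositive, namely verifying that a vertex $v$ violating (NE) lies on a cycle \emph{all} of whose shortest (hence core) representatives have an exit at $v$, so that $b\ne 0$ is guaranteed rather than merely $b$ being ``potentially nonzero''.
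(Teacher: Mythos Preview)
Your proof is correct and follows essentially the same approach as the paper's: both directions hinge on Theorem \ref{aperiodic} for $(2)\Rightarrow(1)$ and on Lemma \ref{stationary_support_cyclic} for $(1)\Rightarrow(2)$, with the paper simply asserting that ``any vertex on that cycle is an aperiodic element'' where you spell out the details. Your extra care about shortest cycles is unnecessary, since once $v$ emits more than one edge every cycle through $v$ has an exit, but it does no harm.
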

\begin{proof}
If $E$ is not a no-exit graph, there is a cycle with an exit and any vertex on that cycle is an aperiodic element of $F_E^\Gamma.$  
Conversely, if $a$ is an aperiodic element of $F_E^\Gamma,$ then $a\to b$ for some stationary element $b$ such that at least one of the core cycles of $b$ must have an exit by Theorem \ref{aperiodic}. Hence, $E$ is not no-exit. 
\end{proof}

Since every element which is not comparable is incomparable, Theorem \ref{comparable} implies a characterization of an incomparable element in $F_E^\Gamma$ also. The following characterization of graphs such that all elements of $F_E^\Gamma$ are incomparable follows directly from Corollary \ref{exists_comparable}. 
 
\begin{corollary}
The following conditions are equivalent.  
\begin{enumerate}[\upshape(1)]
\item Every nonzero element $a\in F_E^\Gamma$ is incomparable. 
\item Every generator of $F_E^\Gamma$ is incomparable.
\item The graph $E$ is acyclic. 
\end{enumerate}
\label{all_incomparable}
\end{corollary}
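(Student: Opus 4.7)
The plan is to observe that this corollary is essentially the logical contrapositive of Corollary \ref{exists_comparable}. That earlier result establishes the equivalence of three existence statements: (i) there is a comparable generator of $F_E^\Gamma$, (ii) there is a nonzero comparable element of $F_E^\Gamma$, and (iii) $E$ has a cycle. Negating each of (i), (ii), (iii) yields precisely the three universal statements (2), (1), (3) of the present corollary. So once we confirm that the dichotomy ``comparable versus incomparable'' is exhaustive, the equivalences will fall out for free.

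First, I would recall that Proposition \ref{generalization_of_lemma_4_1} guarantees that every element $a\in F_E^\Gamma$ is either comparable or incomparable, and these possibilities are mutually exclusive by Definition \ref{definition_of_comparable}. Hence the statement ``every nonzero $a$ is incomparable'' is simply the negation of ``there exists a nonzero comparable $a$,'' and likewise for generators. Since every generator of $F_E^\Gamma$ is nonzero (the generators being indexed by proper and improper vertices of $E$), no boundary case arises.

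With this in place, the three implications are immediate. The direction (1) $\Rightarrow$ (2) holds because generators are in particular nonzero elements. For (2) $\Rightarrow$ (3), if no generator is comparable, then Corollary \ref{exists_comparable}, in the form (1) $\Rightarrow$ (3), forces $E$ to have no cycle, so $E$ is acyclic. For (3) $\Rightarrow$ (1), the contrapositive of the implication (2) $\Rightarrow$ (3) of Corollary \ref{exists_comparable} says that if $E$ has no cycle then there is no nonzero comparable element, i.e., every nonzero element of $F_E^\Gamma$ is incomparable.

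I do not anticipate a substantive obstacle here, since the content lies entirely in the earlier corollary; the only thing to be careful about is bookkeeping around the dichotomy and the fact that generators are nonzero. The proof should therefore be very short, essentially a one-line appeal to Corollary \ref{exists_comparable} together with Proposition \ref{generalization_of_lemma_4_1}.
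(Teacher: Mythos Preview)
Your proposal is correct and matches the paper's own treatment: the paper states that the corollary ``follows directly from Corollary \ref{exists_comparable}'' and gives no further proof, relying (as you do) on the comparable/incomparable dichotomy established via Proposition \ref{generalization_of_lemma_4_1}. Your explicit mention of that dichotomy and of generators being nonzero is appropriate bookkeeping, not a deviation.
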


\subsection{Strengthening results of \texorpdfstring{\cite{Talented_monoid}}{TEXT}}\label{subsection_talented}

By Proposition \ref{generalization_of_lemma_4_1}, a result of \cite{Talented_monoid} holds without the assumption that the graph under consideration is row-finite. In this section, we show that the same assumption can be deleted from some of the main results of \cite{Talented_monoid}. The second part of Corollary \ref{talented4} shows that our results provide some further progress towards a positive answer to the Graded Classification Conjecture.

First, we show that Theorems \ref{periodic} and \ref{aperiodic} and Corollary \ref{all_incomparable} imply \cite[Proposition 4.2]{Talented_monoid} without assuming that the graph is row-finite. We formulate this in the following corollary. 

\begin{corollary} 
\begin{enumerate}[\upshape(1)]
\item The graph $E$ has a cycle with no exit if and only if some nonzero element of $F^\Gamma_E$ is periodic. 
 
\item The graph $E$ has a cycle with an exit if and only if some element of $F^\Gamma_E$ is aperiodic. 

\item The graph $E$ is acyclic if and only if every nonzero element of $F^{\Gamma}_E$ is incomparable. 
\end{enumerate}
\label{talented_corollary1} 
\end{corollary}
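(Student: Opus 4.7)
The plan is to derive each of the three equivalences as an immediate consequence of the characterizations already established, with Lemma~\ref{stationary_support_cyclic} and Corollary~\ref{exists_comparable} doing most of the heavy lifting.

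For part (1), in the forward direction I would pick any vertex $v$ on a cycle with no exits; the singleton $\supp(v) = \{v\}$ consists of a generator on a cycle, so Lemma~\ref{stationary_support_cyclic}(1) gives that $v$ is stationary, and Lemma~\ref{stationary_support_cyclic}(2) gives that $v$ is periodic since its only core cycle has no exit. Conversely, if a nonzero $a\in F_E^\Gamma$ is periodic, Theorem~\ref{periodic}(2) yields an element $b$ whose support consists of vertices lying on cycles without exits; in particular $E$ must possess such a cycle.

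For part (2), pick any vertex $v$ on a cycle $c$ with an exit. Again by Lemma~\ref{stationary_support_cyclic}, $v$ is stationary; but $c$ is a core cycle with an exit, so by part (2) of that same lemma $v$ is not periodic. Hence $v$ is comparable but not periodic, and by the equivalence (1)$\Leftrightarrow$(2) of Theorem~\ref{aperiodic} the element $v$ is aperiodic. For the converse, suppose $a\in F_E^\Gamma$ is aperiodic; Theorem~\ref{aperiodic}(3) produces a stationary element $b$ with $a\to b$ such that at least one core cycle of $b$ has an exit, and this is a cycle of $E$ with an exit.

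Part (3) is essentially a restatement of Corollary~\ref{exists_comparable}. ``Every nonzero element of $F_E^\Gamma$ is incomparable'' is the negation of ``there is a nonzero comparable element'', which by the equivalence (2)$\Leftrightarrow$(3) of Corollary~\ref{exists_comparable} is equivalent to ``$E$ has no cycle'', i.e.\ $E$ is acyclic.

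Since all three statements are direct repackagings, there is no genuine obstacle in the proof; the only thing to be careful about is to invoke the correct part of each earlier result and to note (in part (1) forward direction) that a single vertex on a cycle without exits already realises the required periodic element, so no further construction is needed.
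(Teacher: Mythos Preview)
Your proof is correct and follows essentially the same approach as the paper: Lemma~\ref{stationary_support_cyclic} supplies the ``cycle $\Rightarrow$ element'' directions of (1) and (2), Theorems~\ref{periodic} and~\ref{aperiodic} supply the converses, and part (3) is the contrapositive of Corollary~\ref{exists_comparable}. The paper cites Corollary~\ref{all_incomparable} for part (3), but that corollary is itself an immediate consequence of Corollary~\ref{exists_comparable}, so the two arguments coincide.
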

\begin{proof}
One direction of parts (1) and (2) follows by Theorems \ref{periodic} and \ref{aperiodic}. The other follows by Lemma \ref{stationary_support_cyclic} which implies that a vertex on a cycle is periodic if the cycle has no exits and it is aperiodic if the cycle has an exit. Part (3) directly follows from Corollary \ref{all_incomparable}.    
\end{proof}

By \cite[Theorem 5.7]{Tomforde}, a $\Gamma$-order-ideal of $M_E^\Gamma$ uniquely determines certain subset of vertices. We briefly review this construction. A subset $H$ of $E^0$ is said to be {\em hereditary} if for any $v\in H$ and a path $p$ with $\so(p)=v,$ $\ra(p)$ is in $H$ and it is  {\em saturated} if $\ra(\so^{-1}(v))\subseteq H$ for a regular vertex $v$ implies that $v\in H.$   

For a hereditary and saturated set $H$, let 
\[G(H)=\{v\in E^0-H\mid v\mbox{ is not regular and }\so^{-1}(v)\cap \ra^{-1}(E^0-H)\mbox{ is nonempty and finite} \}.\]
For $G\subseteq G(H),$ the pair $(H, G)$ is said to be an {\em admissible pair}. 
The set of all such pairs is a lattice by 
\[(H_1, G_1)\leq (H_2, G_2)\; \mbox{ iff }\; H_1\subseteq H_2,\;\; G_1\subseteq G_2\cup H_2\]
(see \cite{Tomforde} or \cite{Ara_Goodearl}). By \cite[Theorem 5.7]{Tomforde}, this lattice is isomorphic to the lattice of graded ideals of $L_K(E)$ and by \cite[Theorem 6.9]{Ara_Goodearl}, this lattice is isomorphic to the set of order-ideals of $M_E.$ If $(H,G)\mapsto I(H,G)$ denotes this isomorphism, then $M_E/I(H,G)\cong M_{E/(H,G)}$ 
and both \cite{Tomforde} and \cite{Ara_Goodearl} contain details. By \cite[Lemma 5.10]{Ara_et_al_Steinberg}, the lattices of order-ideals of $M_E$ and of $\Gamma$-order-ideals of $M_E^\Gamma$ are isomorphic. 
Moreover, if the assumption that $E$ is row-finite is deleted and hereditary and saturated set replaced by an admissible pair, the proof of \cite[Lemma 2.2]{Talented_monoid} establishes that 
\[M^\Gamma_E/I(H,G)\cong M^\Gamma_{E/(H,G)}\] for an admissible pair $(H,G).$

Next, we show that the assumption that $E$ is row-finite can be removed from \cite[Corollary 4.3]{Talented_monoid}.

\begin{corollary}  
\begin{enumerate}[\upshape(1)]
\item The following conditions are equivalent. 
\begin{enumerate}[\upshape(i)]
\item The graph $E$ satisfies Condition (L).
\item No nonzero element of $F_E^\Gamma$ is periodic.
\item $\Gamma$ acts freely on $M_E^\Gamma.$ 
\end{enumerate}
\smallskip
\item The following conditions are equivalent.  
\begin{enumerate}[\upshape(i)]
\item The graph $E$ satisfies Condition (K).
\item No nonzero element of $M_E^\Gamma/I$ is periodic for any $\Gamma$-order-ideal $I$ of $M_E^\Gamma.$
\item The group $\Gamma$ acts freely on $M_E^\Gamma/I$ for any $\Gamma$-order-ideal $I$ of $M_E^\Gamma.$ 
\end{enumerate}
\end{enumerate}
\label{talented_corollary2} 
\end{corollary}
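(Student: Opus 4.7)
The plan is to reduce both parts to earlier results of the paper, combined with the lattice isomorphism between $\Gamma$-order-ideals of $M_E^\Gamma$ and admissible pairs of $E$ that is reviewed just before the corollary.

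For part (1), the equivalence (i) $\Leftrightarrow$ (ii) is exactly Corollary \ref{no_periodic}, so the remaining task is (ii) $\Leftrightarrow$ (iii). Here I would use that $\Gamma = \{x^n \mid n \in \mathbb{Z}\}$ is torsion-free. If a nonzero $a \in M_E^\Gamma$ has a nontrivial stabilizer under the $\Gamma$-action, then $x^n a = a$ for some $n \neq 0$; acting by $x^{-n}$ (an automorphism of the monoid) if necessary, we may assume $n > 0$, so $a$ is periodic. Conversely, a periodic nonzero element has a nontrivial stabilizer by definition. Thus the $\Gamma$-action is free on nonzero elements precisely when no nonzero element is periodic.

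For part (2), the equivalence (ii) $\Leftrightarrow$ (iii) is immediate: apply the argument above to each quotient monoid $M_E^\Gamma/I$ for a $\Gamma$-order-ideal $I$. For (i) $\Leftrightarrow$ (ii), I would combine two ingredients. First, the isomorphism $M_E^\Gamma/I(H,G) \cong M_{E/(H,G)}^\Gamma$ recorded in the text (extending \cite[Lemma 2.2]{Talented_monoid} to arbitrary graphs via the admissible pair quotient of \cite{Tomforde}) together with the bijection between $\Gamma$-order-ideals of $M_E^\Gamma$ and admissible pairs $(H,G)$ of $E$. This converts condition (ii) into the statement that $M_{E/(H,G)}^\Gamma$ has no nonzero periodic element for every admissible pair $(H,G)$. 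Applying part (1) to each quotient graph, this becomes: $E/(H,G)$ satisfies Condition (L) for every admissible pair.

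The main obstacle is thus the equivalence between $E$ satisfying Condition (K) and every admissible quotient $E/(H,G)$ satisfying Condition (L). One direction is straightforward: Condition (K) is inherited by every admissible quotient (passing to the quotient only removes vertices and edges without creating new closed simple paths through surviving vertices), and Condition (K) plainly implies Condition (L). For the other direction, I would argue the contrapositive: if $E$ fails Condition (K), there is a vertex $v$ lying on exactly one closed simple path $c$. Let $H$ be the hereditary saturated closure of the set of vertices that either do not reach $v$ or lie outside $c$ and are reached by an exit of $c$, with $G \subseteq G(H)$ chosen minimally so that $(H,G)$ is admissible and $v \notin H$; then in $E/(H,G)$ the image of $c$ is a cycle with no exit, so Condition (L) fails in the quotient. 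This is the classical hereditary-saturated argument (cf.\ \cite[Proposition 6.12]{LPA_book}), but care must be taken because $E$ need not be row-finite; the breaking-vertex set $G$ in the admissible pair is precisely the device that handles infinite emitters on exits of $c$, and the argument carries over.
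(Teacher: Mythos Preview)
Your approach is essentially the same as the paper's: reduce part (1) to Corollary \ref{no_periodic} plus the definitional observation that freeness of the $\Gamma$-action is the same as absence of nonzero periodic elements, and reduce part (2) to part (1) applied to each quotient $M_{E/(H,G)}^\Gamma$ via the ideal/admissible-pair correspondence and the isomorphism $M_E^\Gamma/I(H,G)\cong M_{E/(H,G)}^\Gamma$. The paper simply cites \cite[Proposition 6.12]{Tomforde} for the equivalence ``$E$ has (K) $\Leftrightarrow$ every $E/(H,G)$ has (L)'' rather than sketching it; your sketch of the contrapositive is imprecise (the description of $H$ mixes two unrelated sets, and the phrase ``$G$ chosen minimally so that $v\notin H$'' is meaningless since membership in $H$ does not depend on $G$), so you should just cite the result as the paper does.
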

\begin{proof}
Part (1) directly follows from Corollary \ref{no_periodic}.  

By \cite[Proposition 6.12]{Tomforde}, $E$ satisfies Condition (K) if and only if $E/(H, G)$ satisfies Condition (L) for any admissible pair $(H,G).$ Since every such pair uniquely determines a $\Gamma$-order-ideal of $M_E^\Gamma,$ part (1) and Corollary \ref{no_periodic} imply the equivalences of conditions in part (2). 
\end{proof}

\cite[Corollary 5.1]{Talented_monoid} focuses on the monoid properties of $M_E^\Gamma$ which are equivalent with various forms of simplicity of $L_K(E).$ We show these properties without requiring that $E$ is row-finite.  

\begin{corollary}
Let $K$ be any field. 
\begin{enumerate}[\upshape(1)]
\item The following conditions are equivalent. 
\begin{enumerate}[\upshape(i)]
\item The algebra $L_K(E)$ is graded simple. 

\item The $\Gamma$-monoid $M_E^\Gamma$ is simple.

\item The $\Gamma$-group $G_E^\Gamma$ is simple as an ordered $\Gamma$-group.  
\end{enumerate}
\smallskip 
\item  The following conditions are equivalent. 
\begin{enumerate}[\upshape(i)]
\item The algebra $L_K(E)$ is simple. 

\item The $\Gamma$-monoid $M_E^\Gamma$ is simple and no nonzero element of $M_E^\Gamma$ is periodic. 

\item The $\Gamma$-monoid $M_E^\Gamma$ is simple and every nonzero comparable element of $M_E^\Gamma$ is aperiodic. 
\end{enumerate}
\smallskip
\item  The following conditions are equivalent. 
\begin{enumerate}[\upshape(i)]
\item The algebra $L_K(E)$ is purely infinite simple. 
\item The $\Gamma$-monoid $M_E^\Gamma$ is simple, no nonzero element of $M_E^\Gamma$ is periodic and some element of $M_E^\Gamma$ is aperiodic. 
\end{enumerate}
\end{enumerate}
\label{talented_corollary3}
\end{corollary}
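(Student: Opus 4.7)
The plan is to assemble three well-established ingredients with the earlier results of the paper: the chain of lattice isomorphisms between graded ideals of $L_K(E)$, admissible pairs, and $\Gamma$-order-ideals of $M_E^\Gamma$; the classical characterizations of simple and of purely infinite simple Leavitt path algebras valid for arbitrary graphs; and Corollaries \ref{talented_corollary1} and \ref{talented_corollary2}.

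For part (1), I would invoke the chain of lattice isomorphisms reviewed just before the corollary: graded ideals of $L_K(E)$ correspond to admissible pairs $(H,G)$ by \cite[Theorem 5.7]{Tomforde}; admissible pairs correspond to order-ideals of $M_E$ by \cite[Theorem 6.9]{Ara_Goodearl}; order-ideals of $M_E$ correspond to $\Gamma$-order-ideals of $M_E^\Gamma$ by \cite[Lemma 5.10]{Ara_et_al_Steinberg}; and $\Gamma$-order-ideals of $M_E^\Gamma$ correspond to $\Gamma$-order-ideals of $G_E^\Gamma$ via the lattice isomorphism recorded at the end of Section \ref{subsection_preordered_groups}. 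Since each of these bijections sends trivial ideals to trivial ideals, the absence of nontrivial graded ideals of $L_K(E)$ is equivalent to the simplicity of $M_E^\Gamma$ as a $\Gamma$-monoid and also to the simplicity of $G_E^\Gamma$ as an ordered $\Gamma$-group.

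For part (2), I would combine (1) with the classical fact, valid for arbitrary graphs, that $L_K(E)$ is simple if and only if it is graded simple and $E$ satisfies Condition (L). By (1), graded simplicity is equivalent to the simplicity clause of (ii), and by Corollary \ref{talented_corollary2}(1), Condition (L) is equivalent to ``no nonzero periodic element''. The equivalence of (ii) and (iii) is then immediate: every periodic element is comparable (since $a\sim x^na$ entails $a\succsim x^na$), so ruling out nonzero periodic elements is the same as requiring every nonzero comparable element to be aperiodic, by the dichotomy from Definition \ref{definition_of_comparable}.

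For part (3), I would use the characterization that $L_K(E)$ is purely infinite simple if and only if it is simple and $E$ contains a cycle. Assuming simplicity, Condition (L) holds by (2), so any cycle in $E$ has an exit, and hence ``$E$ contains a cycle'' is equivalent to ``$E$ contains a cycle with an exit''. By Corollary \ref{talented_corollary1}(2), the latter is equivalent to the existence of an aperiodic element in $M_E^\Gamma$. Combined with part (2), this yields the claimed equivalence. The main obstacle I anticipate is bookkeeping rather than a substantive new step: one must verify that the lattice isomorphism and the two Leavitt path algebra characterizations are indeed available in the generality of arbitrary (not necessarily row-finite) graphs, and that the notions of periodic and aperiodic transport cleanly between $F_E^\Gamma$ and $M_E^\Gamma$ via Definition \ref{definition_of_comparable}.
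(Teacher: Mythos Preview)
Your proposal is correct and follows essentially the same route as the paper: part (1) via the chain of lattice isomorphisms, part (2) via ``simple $\Leftrightarrow$ graded simple $+$ Condition (L)'' combined with Corollary \ref{no_periodic} (equivalently Corollary \ref{talented_corollary2}(1)), and part (3) via the purely-infinite-simple characterization together with Corollary \ref{talented_corollary1}(2). The only cosmetic difference is that in part (3) the paper cites directly ``simple $+$ has a cycle with an exit'' (\cite[Theorem 3.1.10]{LPA_book}), whereas you start from ``simple $+$ has a cycle'' and then use Condition (L) to upgrade to a cycle with an exit; both are standard and equivalent.
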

\begin{proof}
Part (1) directly follows from the fact that the lattices of graded ideals of $L_K(E),$ $\Gamma$-order-ideals of $M_E^\Gamma$ and $\Gamma$-order-ideals of $G_E^\Gamma$ are isomorphic. 

By \cite[Theorem 2.9.1]{LPA_book}, $L_K(E)$ is simple if and only if it is graded simple and $E$ satisfies Condition (L). By part (1) and Corollary \ref{no_periodic}, this is equivalent with $M_E^\Gamma$ being simple and without a nonzero periodic element. This last condition is equivalent with the requirement that every nonzero comparable element is aperiodic.   

By \cite[Theorem 3.1.10]{LPA_book},  $L_K(E)$ is purely infinite simple if and only if it is simple and $E$ has a cycle with an exit. By Corollary \ref{talented_corollary1}, $E$ has a cycle with an exit if and only if $M_E^\Gamma$ has an aperiodic element.
\end{proof}

Lastly, we show Corollary \ref{talented4}. Parts (1) and (3) show that the first part of \cite[Theorem 5.7]{Talented_monoid} holds without the condition that $E$ is row-finite. Parts (4) to (8) are further corollaries of our results.  

\begin{corollary}
Let $E$ and $F$ be arbitrary graphs. If there is a $\Gamma$-monoid isomorphism $M_E^\Gamma\to M_F^\Gamma,$ then the following hold.
\begin{enumerate}[\upshape(1)]
\item The graph $E$ satisfies Condition (L) if and only if $F$ satisfies Condition (L).
 
\item The graph $E$ satisfies Condition (K) if and only if $F$ satisfies Condition (K).

\item The lattices of graded ideals of $L_K(E)$ and $L_K(F)$ are isomorphic. 

\item $E$ is acyclic if and only if $F$ is acyclic. 

\item There is a cycle without an exit in $E$ if and only if there is a cycle without an exit in $F.$

\item There is a cycle with an exit in $E$ if and only if there is a cycle with an exit in $F.$ 

\item None of the cycles of $E$ have exits if and only if none of the cycles of $F$ have an exit.  

\item $E$ satisfies the condition below if and only if $F$ satisfies the condition below. 
\begin{itemize}
\item[] The graph is row-finite, no-exit, has no sinks and it is such that every infinite path ends in a cycle. 
\end{itemize}
\end{enumerate}
\label{talented4} 
\end{corollary}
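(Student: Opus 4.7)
My plan for Corollary \ref{talented4}: each of the eight graph-theoretic conditions listed has already been shown earlier in the paper to be equivalent to a property of $M_E^\Gamma$ that is expressible purely in terms of the $\Gamma$-monoid structure. A $\Gamma$-monoid isomorphism $\varphi\colon M_E^\Gamma\to M_F^\Gamma$ preserves $0$, addition, and the $\Gamma$-action, so it automatically preserves the induced order ($[a]\leq[b]$ iff $[b]=[a]+[c]$ for some $[c]$), and therefore preserves the notions of comparable, periodic, aperiodic, and incomparable element, as well as the lattice of $\Gamma$-order-ideals and the quotient construction by such ideals. So for each item, I will just cite the corresponding characterization and invoke transport along $\varphi$.

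For the ``single-monoid'' items (4), (5), (6), (7), (1), and (8) I would simply apply Corollary \ref{all_incomparable} (acyclicity $\Leftrightarrow$ every nonzero element is incomparable), Corollary \ref{talented_corollary1} (existence of a cycle without, respectively with, an exit $\Leftrightarrow$ existence of a nonzero periodic, respectively aperiodic, element), Corollary \ref{no_aperiodic} (no cycle has an exit $\Leftrightarrow$ no element is aperiodic), Corollary \ref{no_periodic} (Condition (L) $\Leftrightarrow$ no nonzero element is periodic), and the equivalence (1)$\Leftrightarrow$(5) of Theorem \ref{all_periodic} (the row-finite/no-exit/no-sinks/``every infinite path ends in a cycle'' package $\Leftrightarrow$ every element of $M_E^\Gamma$ is periodic). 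In each case, the right-hand condition is stable under $\varphi$.

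For the ``lattice'' items (2) and (3) I would use the quotient construction. For (2), Corollary \ref{talented_corollary2}(2) characterizes Condition (K) by the absence of a nonzero periodic element in $M_E^\Gamma/I$ for every $\Gamma$-order-ideal $I$. Since $\varphi$ sends $\Gamma$-order-ideals of $M_E^\Gamma$ bijectively to those of $M_F^\Gamma$ and induces a $\Gamma$-monoid isomorphism $M_E^\Gamma/I\cong M_F^\Gamma/\varphi(I)$ on each corresponding quotient, the absence of nonzero periodic elements in all quotients is transferred between $E$ and $F$. For (3), the lattice of graded ideals of $L_K(E)$ is isomorphic to the lattice of $\Gamma$-order-ideals of $M_E^\Gamma$ by \cite[Theorem 5.7]{Tomforde} combined with \cite[Lemma 5.10]{Ara_et_al_Steinberg}, as recalled in Section \ref{subsection_talented}; applying this to both $E$ and $F$, the bijection induced by $\varphi$ on $\Gamma$-order-ideals yields the required lattice isomorphism between the graded ideal lattices.

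No essentially new obstacle appears. The only verification that is not an outright citation is that $\varphi$ descends to the expected quotients and respects $\Gamma$-order-ideals, but this is a routine consequence of the fact that both $\leq$ and the notion of a $\Gamma$-order-ideal are defined entirely from data ($+$, $0$, and the $\Gamma$-action) that $\varphi$ preserves. Once that observation is recorded, items (1)--(8) all collapse to one-line applications of the appropriate characterization theorem.
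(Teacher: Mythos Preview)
Your proposal is correct and matches the paper's own proof almost line for line: each item is reduced to a $\Gamma$-monoid-invariant property via the same earlier results (Corollaries \ref{talented_corollary1}, \ref{talented_corollary2}, \ref{no_periodic}, \ref{no_aperiodic}, \ref{all_incomparable}, Theorem \ref{all_periodic}, and the graded-ideal/$\Gamma$-order-ideal correspondence). The only cosmetic difference is that for (4) you invoke Corollary \ref{all_incomparable} while the paper cites the contrapositive via Corollary \ref{exists_comparable}, and your treatment of (2) makes explicit the passage to quotients that the paper leaves implicit.
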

\begin{proof}
Parts (1) and (2) directly follow from Corollary \ref{talented_corollary2}. To show part (3), note that a $\Gamma$-monoid isomorphism $M_E^\Gamma\to M_F^\Gamma$ induces a lattice isomorphism on the lattices of $\Gamma$-order-ideals. Since these lattices are isomorphic to lattices of graded ideals of $L_K(E)$ and $L_K(F),$ part (3) holds. 

Part (4) holds since $E$ has a cycle if and only if there is a nonzero comparable element in $M_E^\Gamma$ by Corollary \ref{exists_comparable}.
Part (5) holds since $E$ has a cycle with no exit if and only if there is a nonzero periodic element in $M_E^\Gamma$ by Corollary \ref{talented_corollary1}(1). 
Part (6) holds since $E$ has a cycle with an exit if and only if there is an aperiodic element in  $M_E^\Gamma$ by Corollary \ref{talented_corollary1}(2).

Part (7) holds by Corollary \ref{no_aperiodic} and part (8) by Theorem \ref{all_periodic}.
\end{proof}

Corollary \ref{talented4} asserts that many relevant properties of two graphs match if the graphs have isomorphic graph $\Gamma$-monoids. Together with our previous results, Corollary \ref{talented4} indicates that the Graded Classification Conjecture may have a positive answer since the properties of the graph are well reflected by the structure of its graph $\Gamma$-monoid. 

The Graded Classification Conjecture was shown for finite polycephaly graphs in \cite{Roozbeh_Annalen} and for a certain class of countable, row-finite, no-exit graphs in \cite{Roozbeh_Lia_Ultramatricial}. In \cite{Eilers_et_al}, it was shown for countable graphs such that for any two vertices the set of edges from one to the other is either empty or infinite. We also note that a weaker version of the conjecture was shown for finite graphs with neither sources nor sinks in \cite{Ara_Pardo}.

\end{document}